\newcolumntype{V}{!{\vrule width 2pt}}
\numberwithin{equation}{section}
\def\blue{\textcolor{blue}}
\def\magenta{\textcolor{magenta}}
\theoremstyle{plain}
\newtheorem{theorem}{Theorem}[section]
\newtheorem{corollary}[theorem]{Corollary}
\newtheorem{proposition}[theorem]{Proposition}
\newtheorem{remark}[theorem]{Remark}
\newtheorem{lemma}[theorem]{Lemma}
\newtheorem{definition}[theorem]{Definition}
\def\P{\mathcal{P}}
\def\B{\mathcal{B}}
\def\I{\mathcal{I}}
\def\T{\mathcal{T}}
\def\S{\mathcal{S}}
\def\Z{\mathbb{Z}}
\def\A{\mathcal{A}}
\def\leaf{\mathsf{leaf}}
\def\st{\mathsf{st}}
\def\twin{\mathsf{twin}}
\def\od{\mathsf{od}}
\def\el{\mathsf{el}}
\def\deg{\mathsf{deg}}
\def\elint{\mathsf{elint}}
\def\ystleaf{\mathsf{ystleaf}}
\def\sleaf{\mathsf{sleaf}}
\def\suleaf{\mathsf{suleaf}}
\def\snuleaf{\mathsf{snuleaf}}
\def\eleaf{\mathsf{eleaf}}
\def\elleaf{\mathsf{elleaf}}
\def\yleaf{\mathsf{yleaf}}
\def\oleaf{\mathsf{oleaf}}
\def\etleaf{\mathsf{etleaf}}
\def\entleaf{\mathsf{entleaf}}
\def\etleaf{\mathsf{etleaf}}
\def\syleaf{\mathsf{syleaf}}
\def\yerleaf{\mathsf{yerleaf}}
\def\sint{\mathsf{sint}}
\def\eint{\mathsf{eint}}
\def\yint{\mathsf{yint}}
\def\oint{\mathsf{oint}}
\def\dd{\mathsf{dd}}
\def\da{\mathsf{da}}
\def\pk{\mathsf{pk}}
\def\mnd{\mathsf{mnd}}
\def\mna{\mathsf{mna}}
\def\ldr{\mathsf{ldr}}
\def\rar{\mathsf{rar}}
\def\asc{\mathsf{asc}}
\def\des{\mathsf{des}}
\def\ldr{\mathsf{ldr}}
\def\lmin{\mathsf{lmin}}
\def\rmin{\mathsf{rmin}}
\def\ASC{\operatorname{ASC}}
\def\DES{\operatorname{DES}}
\def\Unb{\operatorname{Unb}}
\def\Orl{\operatorname{Orl}}
\begin{document}

\title[Bijections in weakly increasing trees]{Bijections in weakly increasing trees via binary trees}

\author[Y. Li]{Yang Li}
\address[Yang Li]{Research Center for Mathematics and Interdisciplinary Sciences, Shandong University, \& Frontiers Science Center for Nonlinear Expectations, Ministry of Education, Qingdao 266237, P.R. China}
\email{202421349@mail.sdu.edu.cn}

\author[Z. Lin]{Zhicong Lin}
\address[Zhicong Lin]{Research Center for Mathematics and Interdisciplinary Sciences,  Shandong University \& Frontiers Science Center for Nonlinear Expectations, Ministry of Education, Qingdao 266237, P.R. China}
\email{linz@sdu.edu.cn}

\date{\today}

\begin{abstract}
As a unification of increasing trees and plane trees, the weakly
increasing trees labeled by a multiset was introduced by Lin-Ma-Ma-Zhou in 2021.
Motived by some symmetries in plane trees proved recently by Dong, Du, Ji and Zhang, we construct four bijections on weakly increasing trees  in the same flavor via switching  the role of left child and right child of some specified  nodes in their corresponding binary trees. Consequently,  bijective proofs of the aforementioned symmetries found by Dong et al. and a non-recursive construction of a bijection on plane trees of Deutsch are provided. Applications of some symmetries in weakly increasing trees to permutation patterns and statistics  will also be discussed. 
\end{abstract}

\keywords{Weakly increasing trees, Binary trees, Symmetries, Bijections, Patterns}

\maketitle

\section{Introduction}
 Using the approach of  grammatical calculus introduced by Chen~\cite{Chen1993} and further developed by Chen and Fu~\cite{Chen2017}, Dong, Du, Ji and Zhang~\cite{Dong1} proved three symmetric joint distributions over plane trees and asked for bijective proofs. Weakly increasing trees introduced by Lin, Ma, Ma and Zhou~\cite{Lin20} are common generalization of plane trees and increasing trees. The main objective of this paper is to extend the aforementioned symmetries of Dong et al. from plane trees to weakly increasing trees via  three tree involutions  constructed in a unified manner. 
 
 Recall that a {\em plane tree} is a rooted tree in which the children of each node are linearly ordered. Throughout this paper, let $M = \{1^{p_1} , 2^{p_2} , \cdots , n^{p_n} \}$ be a multiset with cardinality  $|M|= p_1 + \cdots + p_n$. 
 \begin{definition}[Weakly increasing trees~\cite{Lin20}]
     A {\bf\em weakly increasing tree} on $M$ is a plane tree with $|M| + 1$ nodes that are labeled precisely by all elements in the multiset $M \cup\{0\}$ satisfying
    \begin{enumerate}[(i)]
\item  the labels along a path from the root to any leaf are weakly increasing;
\item  the labels of the children of each node are weakly increasing from right to left. 
\end{enumerate} 
 Denote by $\mathcal{T}_{M}$ the set of weakly increasing  trees on $M$.   
\end{definition}

Note that the root of a weakly increasing tree is always labeled by $0$. See Fig.~\ref{wit} (in left) for a weakly increasing tree on $M=\{1^2,2^4,3^3,4^2\}$. It is clear from the above definition that weakly increasing trees on $[n]:=\{1,2,\ldots,n\}$ are exactly  {\em increasing trees} on $[n]$, while weakly increasing trees on $\{1^n\}$ are in obvious bijection with $\P_n$, the set of all plane trees with $n$ edges. In this fashion, one can study the unity between plane trees and increasing trees in the framework of weakly increasing tees. Since the introduction of this notion by Lin et al.~\cite{Lin20} in 2021, there has been many interesting developments and unexpected applications found in~\cite{DHS,CFu,LI2024,LLY,LLWZ,LM,Lin21}, including connections with the Jacobi elliptic functions and combinatorial interpretation of the $\gamma$-coefficients of the multiset Eulerian polynomials. Remarkably, it was proved in~\cite{Lin20} that 
$$
|\T_M|=\frac{1}{1+N_n}\prod_{i=1}^n{N_i+p_i\choose p_i},
$$
where $N_i:=p_1+p_2+\cdots+p_i$ for each $i\in[n]$.

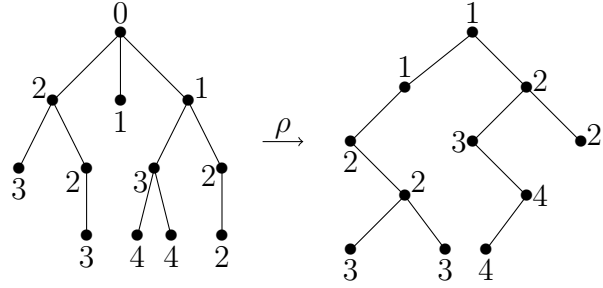
\begin{figure}%[h]
\centering
\begin{tikzpicture}[scale=0.9]
% First Tree
\node at (0,4) {$\bullet$}; \node at (0,4.3) {$0$};
\node at (-1,3) {$\bullet$}; \node at (-1.2,3.2) {$2$};
\node at (0,3) {$\bullet$};\node at (0,2.7) {$1$};
\node at (1,3) {$\bullet$};\node at (1.2,3.2) {$1$};
\node at (-1.5,2) {$\bullet$};\node at (-1.5,1.7) {$3$};
\node at (-0.5,2) {$\bullet$};\node at (-0.7,1.8) {$2$};
\node at (0.5,2) {$\bullet$};\node at (0.3,1.8) {$3$};
\node at (1.5,2) {$\bullet$};\node at (1.3,1.9) {$2$};
\node at (-0.5,1) {$\bullet$};\node at (-0.5,0.7) {$3$};
\node at (0.75,1) {$\bullet$};\node at (0.75,0.7) {$4$};
\node at (0.25,1) {$\bullet$};\node at (0.25,0.7) {$4$};

\draw[-] (0,4) -- (-1,3);
\draw[-] (0,4) -- (0,3);
\draw[-] (0,4) -- (1,3);
\draw[-] (-1,3) -- (-1.5,2);
\draw[-] (-1,3) -- (-0.5,2);
\draw[-] (1,3) -- (0.5,2);
\draw[-] (1,3) -- (1.5,2);
\draw[-] (-0.5,2) -- (-0.5,1);
\draw[-] (0.5,2) -- (0.75,1);
\draw[-] (0.5,2) -- (0.25,1);

\draw[-] (1.5,2) -- (1.5,1);
\node at (1.5,0.7) {$2$};
\node at (1.5,1) {$\bullet$};

% First Arrow
\draw[->] (2.1,2.4) -- (2.7,2.4);
\node at (2.4,2.6) {$\rho$};

% Second Tree
\node at (5.2,4) {$\bullet$};\node at (5.2,4.3) {$1$};
\node at (4.2,3.2) {$\bullet$};\node at (4.2,3.5) {$1$};

\node at (6,3.2) {$\bullet$};\node at (6.2,3.3) {$2$};
\node at (3.4,2.4) {$\bullet$};\node at (3.4,2.1) {$2$};
\node at (5.2,2.4) {$\bullet$};\node at (5,2.4) {$3$};

\node at (3.4,0.8) {$\bullet$};\node at (3.4,0.5) {$3$};

\node at (4.2,1.6) {$\bullet$};\node at (4.4,1.8) {$2$};

\node at (6,1.6) {$\bullet$};\node at (6.2,1.6) {$4$};
\node at (4.8,0.8) {$\bullet$};\node at (4.8,0.5) {$3$};

\node at (5.4,0.8) {$\bullet$};\node at (5.4,0.5) {$4$};

\draw[-] (5.2,4) -- (6,3.2);
\draw[-] (5.2,4) -- (4.2,3.2);
\draw[-] (4.2,3.2) -- (3.4,2.4);
\draw[-] (3.4,2.4) -- (4.2,1.6);
\draw[-] (4.2,1.6) -- (4.8,0.8);
\draw[-] (4.2,1.6) -- (3.4,0.8);
\draw[-] (6,3.2) -- (5.2,2.4);
\draw[-] (5.2,2.4) -- (6,1.6);
\draw[-] (6,1.6) -- (5.4,0.8);
\draw[-] (6,3.2)--(6.8,2.4);
\node at (6.8,2.4) {$\bullet$};\node at (7,2.5) {$2$};

\end{tikzpicture}
\caption{ A weakly increasing tree on $\{1^2,2^4,3^3,4^2\}$ and  its corresponding weakly increasing binary tree under the bijection $\rho$.\label{wit}}
\end{figure}

In a weakly increasing tree, a node without any child is called a {\em leaf}, while a node with children is referred to as an {\em internal node}. Nodes that share the same parent are called {\em siblings}  and those to the left (resp.,~right) of a node $v$  are termed the {\em elder} (resp., {\em younger}) {\em siblings} of $v$. A leaf is considered an {\em old leaf} if it is the leftmost child of its parent, otherwise, it is a {\em young leaf}. Furthermore, an old leaf is called a {\em singleton leaf} if it is the only child of its parent, and an {\em elder leaf} if it has siblings. Finally, an internal node is classified as an {\em elder internal node}  if it is the parent of an elder leaf, a {\em singleton internal node} if it is the parent of a singleton leaf, and a {\em young internal node} otherwise.

For a  tree $T\in \T_{M}$, let $\sleaf(T)$ (resp.,~$\eleaf(T)$, $\sint(T)$, $\eint(T)$, $\yleaf(T)$, $\yint(T)$) denote the number of singleton leaves  (resp.,~elder leaves, singleton internal nodes, young leaves, young internal nodes) in $T$. For example, if $T$ is  the first tree of Fig.~\ref{wit}, then all six statistics on $T$ are equal to $2$. It is clear that
\begin{equation}\label{obs eq1}
\begin{aligned}
     &\sleaf(T)=\sint(T),\quad\eleaf(T)=\eint(T) \quad\text{ and}\\
    &2\sleaf(T)+2\eleaf(T)+\yleaf(T)+\yint(T)=|M|+1.
\end{aligned}
\end{equation}

Over plane trees, old and young leaves  were introduced by Chen, Deutsch and Elizalde~\cite{Chen1} in studying some combinatorial identities, while the further refined six statistics were considered only recently by Dong et al. in~\cite{Dong1}. 

\begin{theorem}\label{Thm:sym1}
   Fix a multiset $M$ with $|M|\geq2$. There exists an involution $\Phi$ on $\T_M$ that transforms the quadruple $(\sleaf,\eleaf,\yleaf,\yint)$ to $(\eleaf,\sleaf,\yint,\yleaf)$.
\end{theorem}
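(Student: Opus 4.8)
The plan is to transport the whole problem to binary trees through the encoding $\rho$ depicted in Figure~\ref{wit} and then reflect. Recall that $\rho$ suppresses the root $0$ and records, for each remaining node, its nearest elder sibling as a left child and its youngest (rightmost) child as a right child, producing a \emph{weakly increasing binary tree}. My first step is to observe that these objects are precisely the binary trees on $|M|$ nodes labelled by $M$ in which every node carries a label no larger than those of both of its children: condition~(i) forces a node to be $\le$ its youngest child (its right child) and condition~(ii) forces it to be $\le$ its nearest elder sibling (its left child), and conversely these two inequalities recover (i)--(ii) by transitivity. The decisive feature is that this defining condition is \emph{symmetric} in the two children.

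Next I would set up the dictionary between the four statistics and local features of $\rho(T)$. The basic facts are: a node is a leaf of $T$ iff it has no right child; it is the leftmost (eldest) child of its parent iff it has no left child; it has a younger sibling iff it is itself a left child; and, apart from the binary root, it is the youngest child of its parent iff it is a right child. Reading off the definitions, an old leaf is exactly a leaf of the binary tree, which is a singleton leaf when it is a right child and an elder leaf when it is a left child; a young leaf is exactly a node with a left child but no right child. For internal nodes, $p$ is young iff its eldest child is internal; since the eldest child of $p$ is the foot of the left spine issuing from the right child of $p$ (hence a node with no left child), the young internal nodes are in bijection with the binary nodes that have a right child but no left child. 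Altogether
\begin{align*}
\sleaf(T)&=\#\{\text{leaves that are right children}\},\\
\eleaf(T)&=\#\{\text{leaves that are left children}\},\\
\yleaf(T)&=\#\{\text{nodes with a left child but no right child}\},\\
\yint(T)&=\#\{\text{nodes with a right child but no left child}\}.
\end{align*}
The suppressed root must be handled with care: the binary root represents the youngest child of $0$, and the hypothesis $|M|\ge2$ guarantees it is never a bare leaf, so it never contributes to $\sleaf$ or $\eleaf$. This is exactly where $|M|\ge2$ is used.

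With the dictionary in hand, the involution is simply the reflection $R$ that recursively switches the left and right child at every node. Because the inequality ``label $\le$ both children'' is left--right symmetric, $R$ maps weakly increasing binary trees to weakly increasing binary trees, and obviously $R^2=\mathrm{id}$. Under $R$ the properties ``has a left child'' and ``has a right child'' are interchanged, as are ``is a left child'' and ``is a right child,'' while leaves and the root are preserved. Comparing with the four displayed formulas, $R$ exchanges $\sleaf\leftrightarrow\eleaf$ and $\yleaf\leftrightarrow\yint$. Setting $\Phi:=\rho^{-1}\circ R\circ\rho$ then gives an involution on $\T_M$ carrying $(\sleaf,\eleaf,\yleaf,\yint)$ to $(\eleaf,\sleaf,\yint,\yleaf)$.

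The main obstacle is not the reflection but the verification of the dictionary, and in particular the identity $\yint(T)=\#\{\text{nodes with a right but no left child}\}$: this rests on correctly identifying the eldest child of an internal node as the foot of a left spine in the binary tree, and on accounting for the deleted root $0$ (whose young-internal status is detected by the foot of the left spine hanging from the binary root). Once these local correspondences are pinned down, and once one has checked that the weakly increasing binary tree condition is genuinely left--right symmetric so that $R$ does not escape the class, the theorem follows at once.
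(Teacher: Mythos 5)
Your proposal is correct and takes essentially the same route as the paper: your dictionary identifying $(\sleaf,\eleaf,\yleaf,\yint)$ with right leaves, left leaves, only-left-child nodes and only-right-child nodes of $\rho(T)$ is exactly Lemma~\ref{lem:phi}, and your reflection $R$ is the paper's mirror map $\phi=\prod_x\phi_x$, so $\Phi=\rho^{-1}\circ R\circ\rho$ coincides with the paper's involution. If anything, you verify more than the paper records, spelling out the left-spine argument behind Lemma~\ref{lem:phi}~(iv) (including the deleted root $0$) and the precise role of $|M|\ge2$ at the binary root, details the paper leaves to the reader.
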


For $n\geq2$, the equidistribution of the two quadruples  
    \begin{equation*}
        (\sleaf,\eleaf,\yleaf,\yint) \text{~~~and~~~} (\eleaf,\sleaf,\yint,\yleaf)
    \end{equation*}
on $\P_n$ was proved algebraically by Dong et al.~\cite{Dong1}. Theorem~\ref{Thm:sym1} extends their equidistribution from plane trees to weakly increasing trees.

\begin{definition}[Tip-augmented weakly increasing trees]
   A  tree in $\T_M$ without any young internal node is called a {\bf\em tip-augmented weakly increasing tree} on $M$.
 The set of all tip-augmented weakly increasing trees on $M$ is denoted by $\mathcal{A}_{M}$. Note that tip-augmented plane trees were introduced by Donaghey~\cite{Donaghey1}.
\end{definition}

For a tree $T\in\T_M$, let  $\oleaf(T):=\sleaf(T)+\eleaf(T)$ and $\oint(T):=\sint(T)+\eint(T)$.
\begin{theorem}
\label{Thm:tip}
    For any multiset $M$,  we have
    \begin{equation}\label{eq:tip}
    \sum_{T\in \T_M}x_1^{\oleaf(T)}x_2^{\yleaf(T)}y_1^{\oint(T)}y_2^{\yint(T)}=\sum_{T\in \A_{M}}(x_1y_1)^{\oleaf(T)}(x_2+y_2)^{\yleaf(T)}.
\end{equation}
\end{theorem}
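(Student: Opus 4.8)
The plan is to transport everything to the weakly increasing binary trees of Figure~\ref{wit} via the bijection $\rho$, where the three relevant statistics become purely local node-types and the identity collapses to an elementary switching argument. Write $\mathcal{B}_M=\rho(\T_M)$ for the set of weakly increasing binary trees on $M$; recall that in $\rho(T)$ the right child of a node is its youngest child and the left child is its nearest elder sibling, so that conditions (i)--(ii) translate into the single requirement that each node's label be at most those of its (one or two) children. My first step is to record the resulting dictionary. A node of $\rho(T)$ has a right child iff the corresponding node of $T$ is internal, and has a left child iff it is a non-leftmost (younger) child. Hence a node with no child at all is an old leaf, a node with a left child but no right child is a young leaf, and --- sending each node that has a right child but no left child to its parent in $T$ --- the nodes of this third kind are in bijection with the young internal nodes of $T$. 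Writing $a(B),b(B),c(B)$ for the numbers of nodes of a binary tree $B$ having no child, only a left child, and only a right child, this gives $\oleaf(T)=a(\rho(T))$, $\yleaf(T)=b(\rho(T))$ and $\yint(T)=c(\rho(T))$, while $T\in\A_M$ exactly when $c(\rho(T))=0$.

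Using $\oint=\oleaf$ from~\eqref{obs eq1} and abbreviating $u=x_1y_1$, the statement~\eqref{eq:tip} becomes the binary-tree identity
\[
\sum_{B\in\mathcal{B}_M}u^{a(B)}x_2^{b(B)}y_2^{c(B)}=\sum_{B\in\mathcal{B}_M,\,c(B)=0}u^{a(B)}(x_2+y_2)^{b(B)}.
\]
The crux is the observation that switching the side (left versus right) of the single child of any node having exactly one child preserves the weakly increasing binary tree condition, since the only inequality affected is ``parent label $\le$ child label,'' which is independent of the side. Such a switch fixes $a$ and the total number $b+c$ of one-child nodes, merely moving one node between the two kinds.

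I would then partition $\mathcal{B}_M$ into equivalence classes under these switches. Each class has a unique member with every one-child node oriented to the left, that is with $c=0$; this representative $A$ lies in $\rho(\A_M)$, and its $b(A)$ one-child nodes are precisely the positions at which a switch is available. Summing the left-hand weight over the class of $A$, each one-child node independently contributes $x_2$ (kept left) or $y_2$ (switched right) while $a$ stays equal to $a(A)$, so the class sum equals $u^{a(A)}(x_2+y_2)^{b(A)}$. Adding over all classes --- equivalently over all $A$ with $c(A)=0$ --- yields the displayed identity, and pulling back through $\rho$ gives~\eqref{eq:tip}.

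The step I expect to be most delicate is the first one: verifying the local dictionary, and in particular the bijection between the ``only-right-child'' nodes and the young internal nodes of $T$, including the boundary cases arising from the suppressed root of $T$ and from the root of the binary tree. Once this translation is secured, the switching argument is immediate and the remainder is pure bookkeeping, so essentially all of the content lies in reading the statistics correctly off the binary tree.
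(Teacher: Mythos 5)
Your proof is correct and is essentially the paper's own argument: the paper uses the same dictionary (Lemma~\ref{lem:phi}, with old leaves, young leaves and young internal nodes of $T$ corresponding to childless, only-left-child and only-right-child nodes of $\rho(T)$) and formalizes your switching of one-child nodes as a $\Z_2^m$-action via the maps $\varphi_i$ indexed by preorder position (Lemma~\ref{lem:action}). Your equivalence classes are exactly its orbits, each with the unique representative satisfying $\yint=0$, and the class-by-class summation yielding $(x_1y_1)^{\oleaf}(x_2+y_2)^{\yleaf}$ is identical.
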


%Setting  $x_1=x_2=x$ and $y_1=y_2=y$ in~\eqref{rel:motz} leads to a result of Lin et al.~\cite[Theorem~2.12]{Lin20}. 
For $n\geq0$, Chen and Pan~\cite{Chen2} considered the following two-variable Motzkin polynomials
\begin{equation*}
    M_n(u,v)=\sum_{k=0}^{\left\lfloor n/2\right\rfloor}\binom{n}{2 k} C_k u^{k+1} v^{n-2 k},
\end{equation*}
where $C_n:=\frac{1}{1+n}{2n\choose n}$ is the $n$-th {\em Catalan number}. When $u$ and $v$ are positive integers, the Motzkin polynomial $M_n(u, v)$ was termed the {\em generalized Motzkin numbers} by Sun~\cite{Sun1}. We shall provide a group action proof of Theorem~\ref{Thm:tip} and derive from its plane tree case  the following relationship between refined Narayana polynomials and Motzkin polynomials.

\begin{corollary}[Dong-Du-Ji-Zhang~\cite{Dong1}]
\label{thm:motz}
For $n\geq1$, we have
 \begin{equation}\label{rel:motz}
    \sum_{T\in \P_n}x_1^{\oleaf(T)}x_2^{\yleaf(T)}y_1^{\oint(T)}y_2^{\yint(T)}=M_{n-1}(x_1y_1,x_2+y_2).
\end{equation}
\end{corollary}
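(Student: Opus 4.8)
The plan is to deduce the corollary from Theorem~\ref{Thm:tip} by specializing the multiset to $M=\{1^n\}$. First I would observe that a weakly increasing tree on $\{1^n\}$ is subject to no genuine constraint from conditions (i) and (ii), since every non-root label equals $1$; hence $\T_{\{1^n\}}$ is literally the set of plane trees $\P_n$, and the statistics $\oleaf,\yleaf,\oint,\yint$, being purely structural, agree on both sides. Applying \eqref{eq:tip} with $M=\{1^n\}$ therefore turns the left-hand side of \eqref{rel:motz} into
\begin{equation*}
\sum_{T\in\A_{\{1^n\}}}(x_1y_1)^{\oleaf(T)}(x_2+y_2)^{\yleaf(T)}.
\end{equation*}
Writing $u=x_1y_1$ and $v=x_2+y_2$, the whole corollary reduces to the purely combinatorial identity $\sum_{T\in\A_{\{1^n\}}}u^{\oleaf(T)}v^{\yleaf(T)}=M_{n-1}(u,v)$.

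To prove this identity I would pass to generating functions. In a tip-augmented tree there are no young internal nodes, so every internal node is the parent of its unique leftmost-child leaf; hence by \eqref{obs eq1} we have $\oleaf(T)=\oint(T)$ equal to the number of internal nodes, while $\yleaf(T)$ counts the remaining leaves. Setting
\begin{equation*}
G=G(x,u,v)=\sum_{n\geq1}\Bigl(\sum_{T\in\A_{\{1^n\}}}u^{\oleaf(T)}v^{\yleaf(T)}\Bigr)x^{n},
\end{equation*}
with $x$ marking edges, I would decompose a nonempty tip-augmented tree at its internal root: the leftmost child is forced to be a leaf and contributes $xu$, while the remaining children form a possibly empty sequence whose entries are each either a young leaf (factor $xv$) or the root of a tip-augmented subtree (factor $xG$). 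This yields the functional equation
\begin{equation*}
G=\frac{xu}{1-xv-xG}.
\end{equation*}

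Next I would check that the Motzkin side satisfies the same equation. Summing the defining series for $M_m(u,v)$ after the substitution $m=2k+j$ and using $\sum_{j\geq0}\binom{2k+j}{2k}(vx)^{j}=(1-vx)^{-2k-1}$ gives $\sum_{m\geq0}M_m(u,v)x^{m}=\frac{u}{1-vx}\,\mathcal{C}(z)$ with $z=\frac{ux^2}{(1-vx)^2}$, where $\mathcal{C}(z)=\sum_{k\geq0}C_kz^{k}$ is the Catalan generating function obeying $\mathcal{C}=1+z\mathcal{C}^2$. A short computation using the resulting identity $1-z\mathcal{C}=\mathcal{C}^{-1}$ shows that $H:=x\sum_{m\geq0}M_m(u,v)x^{m}$ satisfies $H=\frac{xu}{1-xv-xH}$ with $H(0)=0$. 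Since this equation rewrites as the contraction $G=xu+xvG+xG^{2}$, it has a unique power-series solution, so $G=H$; comparing coefficients of $x^{n}$ then finishes the proof.

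The main obstacle is the bookkeeping in the root decomposition: one must correctly separate the forced old leaf from the optional young leaves and tip-augmented subtrees, and then recognize the Catalan/Motzkin quadratic hidden on the Motzkin side. Equivalently, the real content of the identity is the statement that the number of tip-augmented plane trees with $n$ edges and $j$ internal nodes equals $\binom{n-1}{2(j-1)}C_{j-1}$; a bijective alternative would construct a statistic-preserving correspondence between $\A_{\{1^n\}}$ and Motzkin paths of length $n-1$ (in the spirit of Donaghey), sending young leaves to level steps and internal nodes to up-steps plus one. This bijective route, rather than the generating-function verification, is where the genuinely combinatorial difficulty lies.
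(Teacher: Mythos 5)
Your proof is correct, and the first step coincides with the paper's: both specialize Theorem~\ref{Thm:tip} to $M=\{1^n\}$ and thereby reduce \eqref{rel:motz} to the single identity $\sum_{T\in\A_n}u^{\oleaf(T)}v^{\yleaf(T)}=M_{n-1}(u,v)$ (the paper states this as \eqref{def two-v}). Where you diverge is in the proof of that identity. The paper proves it by a direct construction: every tip-augmented tree with $n$ edges and $k+1$ old leaves is obtained uniquely by taking a plane tree with $k$ edges ($C_k$ choices), appending one old leaf to each of its $k+1$ nodes, and inserting the remaining $n-2k$ nodes as young leaves immediately to the right of the $2k+1$ existing edges with repetition allowed, giving $\left(\binom{2k+1}{n-2k}\right)=\binom{n}{2k}$ choices; this yields the coefficient $\binom{n}{2k}C_k$ of $u^{k+1}v^{n-2k}$ at once. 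Notably, this is essentially the ``genuinely combinatorial'' correspondence you deferred to at the end of your proposal --- the paper realizes it not via Motzkin paths but via this transparent three-step assembly, so the difficulty you anticipated there is in fact dispatched in three lines. Your route instead verifies the identity through generating functions: the root decomposition of a tip-augmented tree (valid because the no-young-internal-node condition forces each internal node's leftmost child to be a leaf, and the condition passes to subtrees) gives $G=xu+xvG+xG^{2}$, and your Catalan-series computation correctly shows the Motzkin side satisfies the same quadratic with the same initial condition; I checked the key steps, including $\mathcal{C}(1-z\mathcal{C})=1$ and the negative-binomial summation, and they are sound. What each approach buys: the paper's argument is shorter, bijective, and explains the factor $\binom{n}{2k}$ conceptually as a multiset coefficient, whereas your argument is routine to verify without any inventive construction and yields as a byproduct the closed form $\sum_{m\ge0}M_m(u,v)x^m=\frac{u}{1-vx}\,\mathcal{C}\bigl(\frac{ux^2}{(1-vx)^2}\bigr)$, which the paper does not record.
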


Specializing $x_1=x_2=x$ and $y_1=y_2=y$ in~\eqref{rel:motz} leads to a result of Chen and Pan~\cite[Theorem~1.3~(1.14)]{Chen2}. 
In order to prove~\eqref{rel:motz} using Chen's context-free grammar, Dong et al.~\cite{Dong1} considered  further refinements of elder leaves and young leaves on plane trees. We extend their notions to weakly increasing trees and introduce new refinements of  singleton leaves. In a weakly increasing tree, we define six kinds of leaves:
\begin{itemize}
\item a singleton leaf  whose parent has elder siblings is called a {\em singleton leaf with uncle};
\item a singleton leaf  whose parent has no elder sibling is called a {\em singleton leaf with no uncle};
\item an elder leaf is considered  as an {\em elder twin leaf} if  the second child of its parent is also a leaf;
\item an elder leaf is considered as an {\em elder non-twin leaf} if  the second child of its parent is not a leaf;
\item a young leaf is considered as a {\em second leaf} if it is the second child of its parent;
\item a young leaf is considered as a {\em younger leaf} if it is not the second child of its parent.
\end{itemize}
For $T\in\T_M$, let $\suleaf(T)$  (resp.,~$\snuleaf(T)$, $\etleaf(T)$, $\entleaf(T)$, $\syleaf(T)$, $\yerleaf(T)$) denote the number of singleton leaves  with uncle (resp., singleton leaves with no uncle,  elder twin leaves, elder non-twin leaves, second leaves and younger leaves) in $T$.

Let $\A_n$ be the set of all tip-augmented plane trees in $\P_n$, i.e., $\A_n=\A_{\{1^n\}}$. Consider a refinement of the Motzkin polynomial $M_n(u,v)$ as 
    $$
    M_n(u_1,u_2,u_3;v_1,v_2):=\sum_{T\in \A_{n+1}}u_1^{\sleaf(T)} u_2^{\etleaf(T)} u_3^{\entleaf(T)} v_1^{\yerleaf(T)} v_2^{\syleaf(T)}.
    $$
Dong et al.~\cite{Dong1} proved algebraically the following symmetry on tip-augmented plane trees and asked for a bijective proof. 
\begin{proposition}[Dong-Du-Ji-Zhang~\cite{Dong1}]
 For $n\geq2$, we have 
\begin{equation}\label{eq:sym:mot}
M_n(u_1,u_2,u_3;v_1,v_2)=M_n(u_3,u_2,u_1;v_1,v_2).
\end{equation}
\end{proposition}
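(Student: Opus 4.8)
The goal is a bijective proof: I will construct an involution $\psi$ on $\A_{n+1}$ that interchanges $\sleaf$ and $\entleaf$ while fixing each of $\etleaf$, $\syleaf$ and $\yerleaf$. Granting such a $\psi$, comparing the monomial of $T$ with that of $\psi(T)$ shows that the two generating polynomials agree after the substitution $u_1\leftrightarrow u_3$, which is the assertion. Following the philosophy of the paper I would work through the binary tree $\rho(T)$. First I would record that $T\in\A_{n+1}$ exactly when the leftmost child of every internal node is a leaf, a condition that $\rho$ turns into a single local pattern-avoidance constraint on binary trees. Next I would translate the five leaf types into local data at each leftmost child $\ell$ of $T$: with $s$ denoting the nearest younger sibling of $\ell$, the leaf $\ell$ is a singleton, elder twin, or elder non-twin leaf according as $s$ is absent, a leaf, or internal, while the second and younger leaves are the non-leftmost leaves, distinguished by whether their nearest elder sibling is itself a leftmost child. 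In particular $\sleaf$ and $\entleaf$ differ only by the presence of an internal younger-sibling subtree, with $\etleaf$ lying between them.

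To see where that subtree should go, I would set up the recursive decomposition of tip-augmented trees. Peeling the old leaf and the second child off the root gives, for the generating function $R$ of tip-augmented trees by number of edges (marked by $t$), the equation $R=tu_1+t^2(u_2v_2+u_3R)/(1-t(v_1+R))$; writing $R=tu_1+S$ converts this into $S(1-tv_1-tS)=t^2u_2v_2+t^3u_1u_3+t^2(u_1+u_3)S$, whose right-hand side is visibly invariant under $u_1\leftrightarrow u_3$. I would design $\psi$ to realize this invariance term by term. The block $t^3u_1u_3$ records a singleton and an elder non-twin leaf forced to occur together, and these configurations are left fixed; in the block $t^2(u_1+u_3)S$ the involution turns a singleton-internal node appearing as a later child of some vertex into an elder non-twin old leaf whose internal second child carries the attached $S$-structure, and conversely. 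On $\rho(T)$ this is a rotation that promotes or demotes the distinguished internal node between the later-child and second-child positions, the elder-twin, second-leaf and younger-leaf labels sliding along the affected right path so that their totals are untouched.

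The step I expect to be the main obstacle is making $\psi$ simultaneously well defined, pattern-preserving, and an honest involution. A naive uniform rotation fails: in some local configurations it leaks into the counts of $u_2$, $v_1$ or $v_2$, so the pivot must be pinned down canonically (say as the first qualifying vertex in a fixed traversal), and the crux is to prove that the forward rotation and its inverse select the same pivot, so that $\psi^2=\mathrm{id}$. I would also need to check that $\rho(\psi(T))$ still avoids the forbidden pattern and, by a case analysis along the rotated right path, that the net effect on the five statistics is exactly the transposition $\sleaf\leftrightarrow\entleaf$. As a sanity check I would confirm on $\A_4$ that $\psi$ fixes the trees with monomials $u_2v_1^2v_2$ and $u_1u_3v_1$ and interchanges those with monomials $u_1u_2v_2$ and $u_2u_3v_2$, matching the required symmetry.
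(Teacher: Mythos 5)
Your generating-function core is correct, and it is worth saying so precisely: the root decomposition of a tip-augmented tree (first child a leaf, weighted $u_2$ or $u_3$ according as the second child is a leaf or internal, remaining children contributing $tv_1$ or $tR$) does yield $R=tu_1+t^2(u_2v_2+u_3R)/(1-t(v_1+R))$, the substitution $R=tu_1+S$ does give $S(1-tv_1-tS)=t^2u_2v_2+t^3u_1u_3+t^2(u_1+u_3)S$, and I have checked this against $\A_3$ and $\A_4$. Since that equation determines $S$ uniquely as a formal power series in $t$ and is literally invariant under $u_1\leftrightarrow u_3$, while the asymmetric term $tu_1$ only affects $n=0$, the identity~\eqref{eq:sym:mot} already follows. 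So had you stopped there and drawn this conclusion, you would have a complete proof of the stated proposition --- but an algebraic one, in effect re-deriving Dong et al.'s result by a direct tree decomposition instead of their grammar, and not answering the bijective question that the proposition is attached to in this paper.

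As a bijective proof, which is what you announce, there is a genuine gap that you yourself flag: the involution $\psi$ is never actually defined. Everything that would constitute the proof --- the canonical choice of pivot, the verification that the forward rotation and its inverse select the same pivot so that $\psi^2=\mathrm{id}$, the preservation of the tip-augmented constraint, and the non-leakage into the $u_2$, $v_1$, $v_2$ counts --- is listed as an anticipated obstacle rather than resolved, and your own observation that a naive uniform rotation fails is accurate. The paper's proof (Theorem~\ref{Thm:sym2}) shows why no pivot-hunting is needed: pass to the binary tree $B=\rho(T)$, call a node \emph{unbalanced} if one child is a leaf and the other is internal, and switch the two subtrees at \emph{every} unbalanced node simultaneously, $\psi(B)=\prod_{x\in\Unb(B)}\phi_x(B)$. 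Switching at an unbalanced node leaves it unbalanced, so $\Unb(B)=\Unb(\psi(B))$ and involutivity is automatic; a local check shows the map exchanges $\suleaf\leftrightarrow\entleaf$ while preserving $(\snuleaf,\etleaf,\syleaf,\yerleaf)$ and $\yint$, hence it restricts to $\A_M$, where $\snuleaf=0$ and so $\suleaf=\sleaf$, giving~\eqref{eq:sym:mot} in the stronger weakly increasing generality. If you wish to salvage your recursive plan, note that the symmetric functional equation for $S$ does define an involution recursively, term by term; but you would then have to exhibit the combinatorial decomposition of $S$-trees underlying the rewritten equation and trace the five statistics through it --- exactly the bookkeeping your sketch defers.
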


We answer their bijective problem by proving the following generalization. 
\begin{theorem}\label{Thm:sym2}
   Fix a multiset $M$ with $|M|\geq2$. There exists an involution $\Psi$ on $\T_M$ that preserves the quadruple $(\snuleaf, \etleaf,\syleaf,\yerleaf)$ but exchanges the pair $(\suleaf,\entleaf)$.  Moreover, $\Psi$ preserves the statistic ``$\yint$'' and so $\Psi$ restricts to an involution on $\A_M$ which proves the symmetry~\eqref{eq:sym:mot}. 
\end{theorem}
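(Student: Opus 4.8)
The plan is to pass to the binary-tree model supplied by $\rho$ and to realize $\Psi$ by a single local move there. Fix the convention that, under $\rho$, the left child of a node records its nearest elder sibling and the right child records its youngest (rightmost) child; then a node is a leaf of the plane tree iff it has no right child, it is an old leaf iff it is a leaf of the binary tree, and it is a young leaf iff it has a left child but no right child. The first and most delicate task is to read off the six refined statistics locally on $\rho(T)$. Carrying this out, a binary leaf is a \emph{singleton} leaf when it is a right child and an \emph{elder} leaf when it is a left child; refining further, $\suleaf$ and $\entleaf$ are exactly the binary leaves that are, respectively, the right child and the left child of a \emph{branching node} (a node having two children), while $\snuleaf$ and $\etleaf$ are exactly the binary leaves that are the unique child of their parent. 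For a young leaf $v$, with left child $w$ and no right child, $v$ is a second leaf $\syleaf$ when $w$ has no left child and a younger leaf $\yerleaf$ when $w$ has a left child.

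I then define $\sigma$ on weakly increasing binary trees by swapping the two children of every branching node (and fixing every node with at most one child), and set $\Psi=\rho^{-1}\circ\sigma\circ\rho$. Three checks make this work. First, swapping children preserves the defining property of a weakly increasing binary tree, namely that each node has label no larger than those of its children, so $\sigma$ stays inside the correct class and $\Psi$ is well defined; since the property of being a branching node is $\sigma$-invariant, applying $\sigma$ twice swaps each branching node's children twice and restores the tree, so $\Psi$ is an involution. Second, the move interchanges the statistics $\suleaf$ and $\entleaf$: at a branching node it sends each right-child binary leaf to a left-child binary leaf and conversely, the parent remaining branching throughout. Third, it fixes the other four statistics: the parents of $\snuleaf$- and $\etleaf$-leaves are non-branching, hence untouched, and no leaf can enter or leave these classes because the branching/non-branching status of every node is preserved; and whether a young leaf $v$ is a $\syleaf$ or a $\yerleaf$ depends only on the predicate ``the left child of $v$ has a left child,'' which $\sigma$ preserves node by node.

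Finally I would deduce the $\yint$ and tip-augmented statements for free. Since $\Psi$ permutes the four old-leaf refinements among themselves, it preserves their sum $\sleaf+\eleaf=\oleaf$, and it preserves $\yleaf=\syleaf+\yerleaf$; substituting into the identity \eqref{obs eq1} then forces $\yint$ to be preserved as well. Consequently $\Psi$ maps the fibre $\{\,T:\yint(T)=0\,\}=\A_M$ to itself. On $\A_M$ one has $\snuleaf=0$, since a singleton leaf with no uncle has an internal parent that is the leftmost child of its own parent, making that grandparent a young internal node, which is impossible in $\A_M$; hence $\sleaf=\suleaf$ there. Thus the restriction of $\Psi$ to $\A_M$ exchanges $\sleaf\leftrightarrow\entleaf$ while fixing $\etleaf,\syleaf,\yerleaf$, which is exactly the symmetry $u_1\leftrightarrow u_3$ recorded in \eqref{eq:sym:mot}.

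The main obstacle is the bookkeeping of the first step: the subtle refinements (uncle versus no uncle, twin versus non-twin, second versus younger) must be matched to $\sigma$-invariant local predicates in $\rho(T)$, so that precisely the pair $(\suleaf,\entleaf)$ is made mobile by child-swaps at branching nodes while the remaining four are pinned, either by a non-branching parent or by the ``has a left child'' predicate. Once this dictionary is established correctly, the verification that $\sigma$ is a validity-preserving involution and the derivation of the $\yint$ symmetry from \eqref{obs eq1} are routine.
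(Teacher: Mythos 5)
Your proof is correct, but your involution is genuinely different from the paper's. The paper's $\psi$ switches the two branches only at the \emph{unbalanced} nodes of $\rho(T)$ (those with one leaf child and one internal child); because twin nodes (both children leaves) are left untouched, the exchange of $(\suleaf,\entleaf)$ there requires the counting decomposition $\suleaf(B)=\#\{x\in\Unb(B):\text{right child of }x\text{ a leaf}\}+\twin(B)$ and its mirror for $\entleaf$. Your $\sigma$ instead swaps the children of \emph{every} branching node, which makes the exchange a direct relabelling: each leaf child of a branching parent changes side while the parent stays branching, and a twin node simply contributes one to each of $\suleaf$ and $\entleaf$ before and after, so no twin bookkeeping is needed. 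Your verification that the other four statistics are pinned is sound, since your dictionary agrees with Lemma~\ref{obv:rho2} and all the relevant predicates (only-child leaves on a fixed side, and ``the left child of $v$ has a left child'' for a node $v$ with only a left child) are invariant under swaps at branching nodes, the key point being that $\sigma$ preserves each node's child count and hence the branching/non-branching status of every node. Two smaller divergences: the paper gets preservation of $\yint$ immediately from Lemma~\ref{lem:phi}~(iv), since nodes with only a right child are never acted on, whereas you deduce it from the linear relation~\eqref{obs eq1} --- both work, though the direct argument also applies verbatim to your $\sigma$; and your explicit observation that $\snuleaf=0$ on $\A_M$ (for $|M|\geq2$), hence $\sleaf=\suleaf$ there, fills in a step the paper leaves implicit when converting the exchange of $(\suleaf,\entleaf)$ into the $u_1\leftrightarrow u_3$ symmetry of~\eqref{eq:sym:mot}, whose $u_1$-exponent is $\sleaf$; this is even quicker in the binary model, where a $\snuleaf$-leaf's parent has only a right child and is therefore a $\yint$-node, impossible in $\A_M$. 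The net trade-off: your swap set yields a cleaner exchange argument at the cost of moving more of the tree (it also swaps at nodes with two internal children), while the paper's minimal swap set keeps $\psi$ as small a perturbation as possible but pays with the twin-count decomposition.
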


It turns out that  the proofs of Theorems~\ref{Thm:sym1},~\ref{Thm:tip} and~\ref{Thm:sym2} are all based on a simple switching operation on binary trees, namely switching left and right branches of a node. Moreover, this operation enables us to built non-recursively a bijection due to Deutsch~\cite{Deut} on plane trees and independently extended to weakly increasing trees by Lin--Ma~\cite{LM} and Chen--Fu~\cite{CFu}. As a by product, we find that the statistic of ``internal nodes on even-levels'' has the same distribution as ``$\oleaf$'' over plane trees (see Corollary~\ref{cor:elintleaf}). This paper reflects the principle: whenever you encounter constructing bijections on weakly increasing trees, then expect that binary trees  will facilitate your problem. 

Applications of Theorems~\ref{Thm:sym1},~\ref{Thm:tip} and~\ref{Thm:sym2} to permutation patterns and statistics will also be discussed, including a generalization of a symmetry regarding ``maximal number of non-overlapping descents/ascents'' on $231$-avoiding permutations due to Kitaev and Zhang~\cite{KZ}. 

The rest of this paper is organized as follows. In Section~\ref{sec:2},  we prove Theorems~\ref{Thm:sym1},~\ref{Thm:tip} and~\ref{Thm:sym2} and provide a non-recursive construction of  Deutsch's bijection using a simple switching operation on binary trees.  Using binary trees, we can also prove a unified formula that generalizes  two main formulas of Dong et al.~\cite{Dong1}. Section~\ref{sec:3} is devoted to some applications of our bijections  to permutation patterns and statistics, including two symmetries in $312$-avoiding permutations, one of which generalizes a result of Kitaev and Zhang~\cite{KZ}.

%%%%%%%%%%%%%%%%%%%%%%%%%%%%%%%%%%%%
\section{Combinatorics of weakly increasing trees via binary trees} 
%%%%%%%%%%%%%%%%%%%%%%%%%%%%%%%%%%%
\label{sec:2}
In this section, via the switching  operation on binary trees, we prove Theorems~\ref{Thm:sym1},~\ref{Thm:tip} and~\ref{Thm:sym2} and provide a non-recursive construction of a bijection on plane trees due to Deutsch. The generating function for the quintuple $(\sleaf,\etleaf, \entleaf, \syleaf, \yerleaf)$ over plane trees will also be computed via binary trees, unifying two generating function formulas due to Dong et al.~\cite{Dong1}. 

%%%%%%%%%%%%%%%%%%%%%%%%
\subsection{Weakly increasing binary trees}%%%
%%%%%%%%%%%%%%%%%%%%%%%%

We begin by introducing  weakly increasing binary trees, which are in natural bijection with weakly increasing trees. A {\em binary tree} is a rooted tree where each internal node has either a left child, a right child, or both. Let $\B_n$ denote the set of all binary trees with $n$ nodes.
As a  common generalization of binary trees and binary increasing trees, the weakly increasing binary trees was first used in~\cite{LM}. 

\begin{definition}[Weakly increasing binary trees~\cite{LM}]
 For a fixed  multiset $M$, a  {\bf\em weakly increasing binary  tree} on $M$ is a labeled  binary tree  such that 
 \begin{enumerate}[(i)]
\item  the labels of the nodes form precisely the multiset $M$ and
\item  the labels along a path from the root to any leaf are weakly increasing. 
\end{enumerate}   
 Denote by $\mathcal{B}_{M}$ the set of weakly increasing binary trees on $M$. 
\end{definition}

See Fig.~\ref{wit} (in right) for a weakly increasing binary tree on $M=\{1^2,2^4,3^3,4^2\}$. Note that weakly increasing binary trees on $[n]$ are precisely increasing binary trees on $[n]$, while weakly increasing binary trees on $\{1^n\}$ are in  bijection with $\B_n$. 
 For a weakly increasing tree $T\in\T_M$, we define its corresponding weakly increasing binary tree $\rho(T)\in\B_M$ by imposing the condition on each pair of non-root nodes $(x,y)$ in $T$: 
 \begin{enumerate}[(i)]
 \item $y$ is the right child of $x$ in $\rho(T)$ only if  when  $y$ is the rightmost child of  $x$ in $T$;
 \item  $y$ is the left child of $x$ in $\rho(T)$ only if when $x$ is the closest elder sibling of $y$ in $T$.
 \end{enumerate}
The mapping $T\mapsto\rho(T)$ establishes a natural one-to-one correspondence between $\T_M$ and $\B_M$, which extends the classical  bijection between $\P_n$ and $\B_n$ in~\cite[Page~9]{St1}. See Fig.~\ref{wit} for an example of the bijection $\rho$. 

For a binary tree $B\in\B_M$ and a node $x$ of $B$, let $\phi_x(B)$ be the tree in $\B_M$ obtained from $B$ by switching  the left branch and the right branch of $x$, if any. For any two nodes $x,y$ of $B$, it is clear that $\phi_x\circ\phi_y(T)=\phi_y\circ\phi_x(T)$, i.e., $\phi_x$ and $\phi_y$ are commute. Thus, if $A$ is a set of some specified nodes of $B$, then it is reasonable to define the transformation $B\mapsto\prod_{x\in A}\phi_x(B)$.

Throughout this paper, if $\st$ is a statistic on $\T_M$ and $B$ is a binary tree in $\B_M$, then we define 
$$\st(B)=\st(\rho^{-1}(B)),$$ 
for the sake of convenience. For example, if $B\in\B_M$ with $|M|\ge2$, then $\sleaf(B)$ is the number of right leaves of $B$ by Lemma~\ref{lem:phi}~$(i)$. 

%%%%%%%%%%%%%%%%%%%%%%%%%%%%%%%%%%%%
\subsection{Proofs of Theorems~\ref{Thm:sym1} and~\ref{Thm:sym2}}%%
%%%%%%%%%%%%%%%%%%%%%%%%%%%%%%%%%%%%

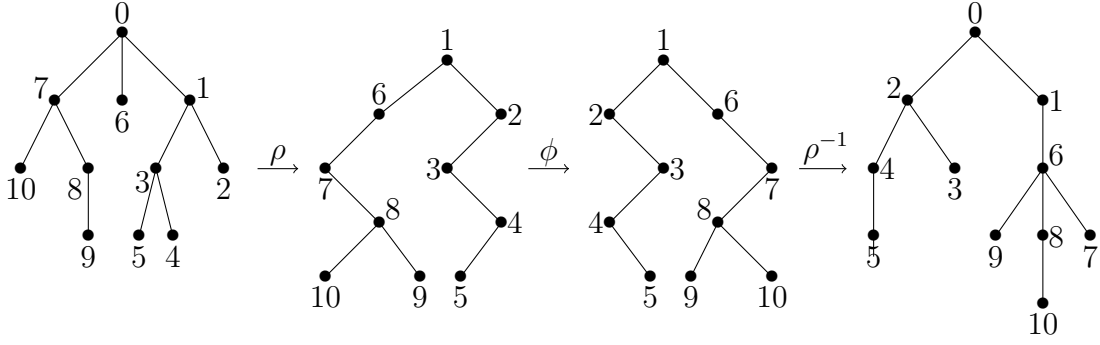
\begin{figure}%[h]
\centering

\begin{tikzpicture}[scale=0.9]

% First Tree
\node at (0,4) {$\bullet$}; \node at (0,4.3) {$0$};
\node at (-1,3) {$\bullet$}; \node at (-1.2,3.2) {$7$};
\node at (0,3) {$\bullet$};\node at (0,2.7) {$6$};
\node at (1,3) {$\bullet$};\node at (1.2,3.2) {$1$};
\node at (-1.5,2) {$\bullet$};\node at (-1.5,1.7) {$10$};
\node at (-0.5,2) {$\bullet$};\node at (-0.7,1.7) {$8$};
\node at (0.5,2) {$\bullet$};\node at (0.3,1.8) {$3$};
\node at (1.5,2) {$\bullet$};\node at (1.5,1.7) {$2$};
\node at (-0.5,1) {$\bullet$};\node at (-0.5,0.7) {$9$};
\node at (0.75,1) {$\bullet$};\node at (0.75,0.7) {$4$};
\node at (0.25,1) {$\bullet$};\node at (0.25,0.7) {$5$};

\draw[-] (0,4) -- (-1,3);
\draw[-] (0,4) -- (0,3);
\draw[-] (0,4) -- (1,3);
\draw[-] (-1,3) -- (-1.5,2);
\draw[-] (-1,3) -- (-0.5,2);
\draw[-] (1,3) -- (0.5,2);
\draw[-] (1,3) -- (1.5,2);
\draw[-] (-0.5,2) -- (-0.5,1);
\draw[-] (0.5,2) -- (0.75,1);
\draw[-] (0.5,2) -- (0.25,1);

% First Arrow
\draw[->] (2,2) -- (2.6,2);
\node at (2.3,2.2) {$\rho$};

% Second Tree
\node at (4.8,3.6) {$\bullet$};\node at (4.8,3.9) {$1$};
\node at (3.8,2.8) {$\bullet$};\node at (3.8,3.1) {$6$};

\node at (5.6,2.8) {$\bullet$};\node at (5.8,2.8) {$2$};
\node at (3,2) {$\bullet$};\node at (3,1.7) {$7$};
\node at (4.8,2) {$\bullet$};\node at (4.6,2) {$3$};

\node at (3,0.4) {$\bullet$};\node at (3,0.1) {$10$};

\node at (3.8,1.2) {$\bullet$};\node at (4,1.4) {$8$};

\node at (5.6,1.2) {$\bullet$};\node at (5.8,1.2) {$4$};
\node at (4.4,0.4) {$\bullet$};\node at (4.4,0.1) {$9$};

\node at (5,0.4) {$\bullet$};\node at (5,0.1) {$5$};

\draw[-] (4.8,3.6) -- (5.6,2.8);
\draw[-] (4.8,3.6) -- (3.8,2.8);
\draw[-] (3.8,2.8) -- (3,2);
\draw[-] (3,2) -- (3.8,1.2);
\draw[-] (3.8,1.2) -- (4.4,0.4);
\draw[-] (3.8,1.2) -- (3,0.4);
\draw[-] (5.6,2.8) -- (4.8,2);
\draw[-] (4.8,2) -- (5.6,1.2);
\draw[-] (5.6,1.2) -- (5,0.4);

% Second Arrow
\draw[->] (6,2) -- (6.6,2);
\node at (6.3,2.25) {$\phi$};

% Third Tree
\node at (8,3.6) {$\bullet$};\node at (8,3.9) {$1$};
\node at (7.2,2.8) {$\bullet$};\node at (7,2.8) {$2$};
\node at (8.8,2.8) {$\bullet$};\node at (9,3) {$6$};
\node at (7.2,1.2) {$\bullet$};\node at (7,1.2) {$4$};

\node at (8,2) {$\bullet$};\node at (8.2,2) {$3$};
\node at (7.8,0.4) {$\bullet$};\node at (7.8,0.1) {$5$};
\node at (8.8,1.2) {$\bullet$};\node at (8.6,1.4) {$8$};
\node at (8.4,0.4) {$\bullet$};\node at (8.4,0.1) {$9$};
\node at (9.6,0.4) {$\bullet$};\node at (9.6,0.1) {$10$};
\node at (9.6,2) {$\bullet$};\node at (9.6,1.7) {$7$};

\draw[-] (8,3.6) -- (8.8,2.8);
\draw[-] (8,3.6) -- (7.2,2.8);
\draw[-] (7.2,2.8) -- (8,2);
\draw[-] (8,2) -- (7.2,1.2);
\draw[-] (7.2,1.2) -- (7.8,0.4);
\draw[-] (8.8,2.8) -- (9.6,2);
\draw[-] (9.6,2) -- (8.8,1.2);
\draw[-] (8.8,1.2) -- (8.4,0.4);
\draw[-] (8.8,1.2) -- (9.6,0.4);

% Third Arrow
\draw[->] (10,2) -- (10.7,2);
\node at (10.4,2.3) {$\rho^{-1}$};

% Fourth Tree
\node at (12.6,4) {$\bullet$}; \node at (12.6,4.3) {$0$};
\node at (11.6,3) {$\bullet$}; \node at (11.4,3.2) {$2$};
\node at (13.6,3) {$\bullet$};\node at (13.8,3) {$1$};
\node at (11.1,2) {$\bullet$};\node at (11.3,2) {$4$};
\node at (12.3,2) {$\bullet$};\node at (12.3,1.7) {$3$};
\node at (13.6,2) {$\bullet$};\node at (13.8,2.2) {$6$};
\node at (11.1,1) {$\bullet$};\node at (11.1,0.7) {$5$};
\node at (12.9,1) {$\bullet$};\node at (12.9,0.7) {$9$};
\node at (13.6,1) {$\bullet$};\node at (13.8,1) {$8$};
\node at (14.3,1) {$\bullet$};\node at (14.3,0.7) {$7$};
\node at (13.6,0) {$\bullet$};\node at (13.6,-0.3) {$10$};

\draw[-] (12.6,4) -- (11.6,3);
\draw[-] (12.6,4) -- (13.6,3);
\draw[-] (11.6,3) -- (11.1,2);
\draw[-] (11.1,2) -- (11.1,0.7);
\draw[-] (11.6,3) -- (12.3,2);
\draw[-] (13.6,3) -- (13.6,2);
\draw[-] (13.6,2) -- (12.9,1);
\draw[-] (13.6,2) -- (13.6,1);
\draw[-] (13.6,2) -- (14.3,1);
\draw[-] (13.6,1) -- (13.6,0);

\end{tikzpicture}
\caption{An example of the involution $\Phi=\rho^{-1}\circ\phi\circ\rho$.\label{invo1}}
\end{figure}

The proof of Theorem~\ref{Thm:sym1} is based on the following key observation. 
\begin{lemma}\label{lem:phi}
Let $T$ be a weakly increasing tree with at least two edges. Then, 
\begin{enumerate}[(i)]
    \item $\sleaf(T)$ equals the number of right leaves in  $\rho(T)$;
    \item $\eleaf(T)$ equals the number of left leaves in  $\rho(T)$;
    \item $\yleaf(T)$ equals the number of nodes having only left child in  $\rho(T)$;
    \item $\yint(T)$ equals the  number of nodes having only right child in  $\rho(T)$.
\end{enumerate}
\end{lemma}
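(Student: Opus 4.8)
The plan is to translate the definition of $\rho$ into a purely local description of $\rho(T)$ and then identify each of the four statistics with a count of nodes of a prescribed type in the binary tree. Unwinding the two conditions defining $\rho$ (see Figure~\ref{wit}), the right child of a node $x$ in $\rho(T)$ is the rightmost child of $x$ in $T$, while the left child of $x$ is the closest elder sibling of $x$ in $T$; in particular the root of $\rho(T)$ is the rightmost child of the root $0$ of $T$, and a non-root node of $T$ is a right child in $\rho(T)$ exactly when it is a rightmost child in $T$, and a left child exactly when it has a younger sibling. The two facts I will use throughout are then immediate: a node $x$ \emph{has} a right child in $\rho(T)$ iff $x$ is internal in $T$, and $x$ \emph{has} a left child in $\rho(T)$ iff $x$ has an elder sibling in $T$ (equivalently, $x$ is not the leftmost child of its parent).

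From these, I read off parts (i)--(iii) directly. A node is a leaf of $\rho(T)$ iff it has neither a right nor a left child, i.e.\ iff it is a leaf of $T$ that is a leftmost child -- an old leaf. Among old leaves, the right leaves are those that are also rightmost children, hence the only children, i.e.\ the singleton leaves, proving (i); the left leaves are the old leaves that in addition have a younger sibling, i.e.\ the old leaves with siblings, which are the elder leaves, proving (ii). Likewise a node has only a left child iff it is a leaf of $T$ with an elder sibling, that is, a young leaf, proving (iii). The hypothesis $|M|\ge 2$ enters only to exclude the one-node binary tree: it guarantees the root of $\rho(T)$ is not itself a singleton leaf, so no boundary term is lost in (i), and it is harmless in (ii)--(iii).

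Part (iv) is the only step that is not a verbatim local identity, and this is where I expect the main work to lie. The description above shows that a node has only a right child in $\rho(T)$ iff it is internal in $T$ and is the leftmost child of its parent; denote the set of such nodes by $L$. This is \emph{not} the set $Y$ of young internal nodes -- a young internal node usually retains both children in $\rho(T)$ -- so I will instead exhibit a bijection $Y\to L$. By definition an internal node $v$ is young internal precisely when its leftmost child is internal rather than a leaf (a leaf leftmost child would be a singleton or an elder leaf, forcing $v$ to be a singleton or elder internal node). Hence $v\mapsto(\text{leftmost child of }v)$ sends $Y$ into $L$, and conversely each $c\in L$ is the leftmost child of a unique node, which is then young internal; this is the desired bijection, so $\yint(T)=|Y|=|L|$ equals the number of nodes with only a right child in $\rho(T)$, establishing (iv). The one point demanding attention is that $\rho$ discards the root $0$ of $T$: the bijection absorbs this cleanly, since when $0$ is young internal it is matched with its internal leftmost child, which does survive in $\rho(T)$.
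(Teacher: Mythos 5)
Your proposal is correct and takes essentially the same route as the paper: the paper likewise treats (i)--(iii) as immediate from the local description of $\rho$ and proves (iv) by observing that a node is young internal exactly when its eldest (leftmost) child is internal, with $\rho$ sending internal eldest nodes to the nodes of $\rho(T)$ having a right child but no left child --- precisely your bijection $v\mapsto(\text{leftmost child of }v)$ between $Y$ and $L$. Your write-up is simply more explicit, notably in spelling out the local characterization of left/right children and in checking the boundary cases involving the root of $\rho(T)$ and the discarded root $0$ of $T$.
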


\begin{proof}
    We will only show $(iv)$, as the other three assertions follow easily from the construction of $\rho$. Note that a node is a  young internal node if and only if its eldest child is an internal node.  Since the map $\rho$ assigns an internal eldest node of $T$ to a node in $\rho(T)$ that lacks a left child but possesses right child, $(iv)$ follows.
\end{proof}

\begin{proof}[Proof of Theorem~\ref{Thm:sym1}] For $B\in\B_M$, let $\phi(B)$ be the minor symmetry of $T$, i.e., $\phi(B)=\prod_x\phi_x(B)$, running  over all nodes $x$ of $B$. Then the composition $\Phi=\rho^{-1}\circ\phi\circ\rho$ is an involution with the desired property in view of Lemma~\ref{lem:phi}. See Fig.~\ref{invo1} for an example of the involution $\Phi$. 
\end{proof}

\begin{remark}
A slight  modification of the involution $\Phi$ was used in~\cite{LI2024} to study other symmetry of trees.
\end{remark}

\begin{lemma}\label{obv:rho2}
    Let $T$ be a weakly increasing tree with at least two edges. Then, 
    \begin{enumerate}[(i)]
        \item $\suleaf(T)$ equals the number of right leaves in $\rho(T)$ whose parent has  left child;
        \item $\snuleaf(T)$ equals the number of right leaves in $\rho(T)$ whose parent has no left child;
        \item $\etleaf(T)$ equals the number of left leaves in $\rho(T)$ whose parent has no right child;
        \item $\entleaf(T)$ equals the number of left leaves in $\rho(T)$ whose parent has  right child;
        \item $\yerleaf(T)$ equals the number of nodes in $\rho(T)$ with only left child and whose left child has left child;
        \item $\syleaf(T)$ equals the number of nodes in $\rho(T)$ with only left child and whose left child has no left child.
    \end{enumerate}
\end{lemma}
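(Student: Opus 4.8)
The plan is to establish all six equalities by the same device used in Lemma~\ref{lem:phi}: read off each refined statistic from the local shape of $\rho(T)$, now keeping track of one extra edge beyond what Lemma~\ref{lem:phi} already records. Two features of the construction of $\rho$ will do all the work. First, a node $x$ of $T$ has a right child in $\rho(T)$ if and only if $x$ is internal in $T$ (its right child being the rightmost child of $x$). Second, a node $x$ has a left child in $\rho(T)$ if and only if $x$ has an elder sibling in $T$ (its left child being the closest elder sibling of $x$). Combined with Lemma~\ref{lem:phi}, which already matches singleton leaves, elder leaves and young leaves with the right leaves, the left leaves and the only-left-child nodes of $\rho(T)$, these two facts reduce each assertion to identifying the correct neighbouring node and applying one of them.

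For $(i)$ and $(ii)$, I would start from a singleton leaf $\ell$ with parent $p$. Since $\ell$ is the unique, hence rightmost, child of $p$, it is a right leaf of $\rho(T)$ whose parent is exactly $p$; applying the second fact to $p$ shows that $p$ carries a left child precisely when $p$ has an elder sibling, that is, when $\ell$ is a singleton leaf with uncle. For $(iii)$ and $(iv)$, an elder leaf $\ell$ is the leftmost child of its parent and becomes a left leaf whose parent in $\rho(T)$ is its closest younger sibling, namely the \emph{second} child $s$ of that parent; the first fact applied to $s$ says that $s$ has a right child exactly when $s$ is internal in $T$, so the absence of a right child signals that the second child is a leaf, i.e.\ that $\ell$ is an elder twin leaf. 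For $(v)$ and $(vi)$, a young leaf $\ell$ is an only-left-child node of $\rho(T)$ whose left child is its closest elder sibling $w$; the second fact applied to $w$ shows that $w$ has a left child exactly when $w$ itself has an elder sibling, which happens iff $\ell$ is at least the third child of its parent, i.e.\ a younger leaf, while the absence of such an edge marks $\ell$ as the second child of its parent. Since $\rho$ is a bijection, each of these local equivalences upgrades to the stated equality of statistics.

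I expect the only real obstacle to be bookkeeping the parent/child direction and the left/right orientation of the single extra edge in each case — in particular recognising that the parent in $\rho(T)$ of an elder (leftmost-child) leaf is the second child of the common parent, so that ``the second child is a leaf'' becomes ``this parent lacks a right child'', and dually that the left child in $\rho(T)$ of a young leaf is its immediate elder sibling, so that ``$\ell$ is the second child'' becomes ``that left child lacks a left child''. Once these incidences are pinned down by the two facts above, translating ``has an uncle'', ``elder twin'' and ``second leaf'' into the presence or absence of a left or right child is automatic, and checking the dictionary against Figure~\ref{wit} (or the distinctly labelled example in Figure~\ref{invo1}) provides a useful sanity test.
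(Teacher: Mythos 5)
Your proposal is correct and takes essentially the same route as the paper: the paper's proof merely asserts that all six statements ``can be verified routinely from the construction of $\rho$,'' and your case analysis supplies exactly those routine details, refining the correspondences of Lemma~\ref{lem:phi} by the two local facts that a node of $\rho(T)$ has a right child iff it is internal in $T$ and has a left child iff it has an elder sibling in $T$. Note that your reading of the construction --- the left child of $x$ in $\rho(T)$ is the closest \emph{elder} sibling of $x$ --- is the intended one (the paper's phrasing of condition~(ii) has $x$ and $y$ transposed), as your check against Fig.~\ref{wit} confirms.
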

\begin{proof}
    All the assertions can be verified routinely from the construction of $\rho$. We leave the details to the reader. 
\end{proof}

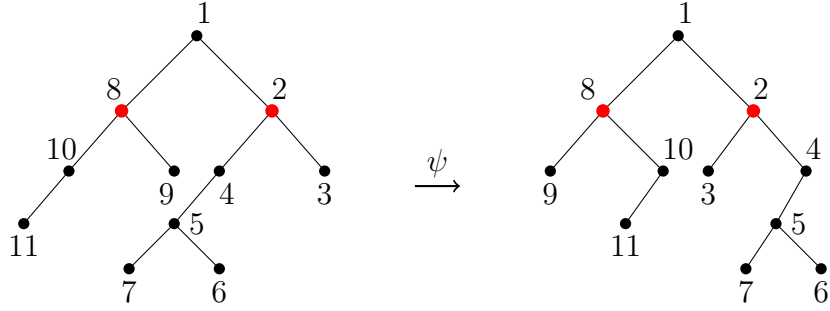
\begin{figure}
    \centering
    \begin{tikzpicture}

        \draw[-] (0,4) -- (-1,3);
        \draw[-] (0,4) -- (1,3);
        \draw[-] (-1,3) -- (-1.7,2.2);
        \draw[-] (-1.7,2.2) -- (-2.3,1.5);
        \draw[-] (-1,3) -- (-0.3,2.2);
        \draw[-] (1,3) -- (0.3,2.2);
        \draw[-] (1,3) -- (1.7,2.2);
        \draw[-] (0.3,2.2) -- (-0.3,1.5);
        \draw[-] (-0.3,1.5) -- (-0.9,0.9);
        \draw[-] (-0.3,1.5) -- (0.3,0.9);

        \node at (0,4) [circle,fill,inner sep=1.5pt]{};\node at (0.1,4.3) {$1$};
        \node at (-1,3) [circle,fill=red,inner sep=1.8pt]{};\node at (-1.1,3.3) {$8$};
        \node at (1,3) [circle,fill=red,inner sep=1.8pt]{};\node at (1.1,3.3) {$2$};
        \node at (-1.7,2.2) [circle,fill,inner sep=1.5pt]{};\node at (-1.8,2.5) {$10$};
        \node at (-0.3,2.2) [circle,fill,inner sep=1.5pt]{};\node at (-0.4,1.9) {$9$};
        \node at (0.3,2.2) [circle,fill,inner sep=1.5pt]{};\node at (0.4,1.9) {$4$};
        \node at (1.7,2.2) [circle,fill,inner sep=1.5pt]{};\node at (1.7,1.9) {$3$};
        \node at (-2.3,1.5) [circle,fill,inner sep=1.5pt]{};\node at (-2.3,1.2) {$11$};
        \node at (-0.3,1.5) [circle,fill,inner sep=1.5pt]{};\node at (0,1.5) {$5$};
        \node at (-0.9,0.9) [circle,fill,inner sep=1.5pt]{};\node at (-0.9,0.6) {$7$};
        \node at (0.3,0.9) [circle,fill,inner sep=1.5pt]{};\node at (0.3,0.6) {$6$};

        \draw[->, thick] (2.9,2) -- (3.5,2);
        \node at (3.2,2.3) {$\psi$};

        \draw[-] (6.4,4) -- (5.4,3);
        \draw[-] (6.4,4) -- (7.4,3);
        \draw[-] (5.4,3) -- (4.7,2.2);
        \draw[-] (5.4,3) -- (6.2,2.2);
        \draw[-] (7.4,3) -- (6.8,2.2);
        \draw[-] (7.4,3) -- (8.1,2.2);
        \draw[-] (6.2,2.2) -- (5.7,1.5);
        \draw[-] (8.1,2.2) -- (7.7,1.5);
        \draw[-] (7.7,1.5) -- (7.3,0.9);
        \draw[-] (7.7,1.5) -- (8.3,0.9);

        \node at (4.7,2.2) [circle,fill,inner sep=1.5pt]{};\node at (4.7,1.9) {$9$};
        \node at (6.2,2.2) [circle,fill,inner sep=1.5pt]{};\node at (6.4,2.5) {$10$};
        \node at (5.4,3) [circle,fill=red,inner sep=1.8pt]{};\node at (5.2,3.3) {$8$};
        \node at (6.4,4) [circle,fill,inner sep=1.5pt]{};\node at (6.5,4.3) {$1$};
        \node at (7.4,3) [circle,fill=red,inner sep=1.8pt]{};\node at (7.5,3.3) {$2$};
        \node at (6.8,2.2) [circle,fill,inner sep=1.5pt]{};\node at (6.8,1.9) {$3$};
        \node at (8.1,2.2) [circle,fill,inner sep=1.5pt]{};\node at (8.2,2.5) {$4$};
        \node at (5.7,1.5) [circle,fill,inner sep=1.5pt]{};\node at (5.7,1.2) {$11$};
        \node at (7.7,1.5) [circle,fill,inner sep=1.5pt]{};\node at (8,1.5) {$5$};
        \node at (7.3,0.9) [circle,fill,inner sep=1.5pt]{};\node at (7.3,0.6) {$7$};
        \node at (8.3,0.9) [circle,fill,inner sep=1.5pt]{};\node at (8.3,0.6) {$6$};

    \end{tikzpicture}
    \caption{An example of the involution $\psi$.}
    \label{invo3}
\end{figure}

\begin{proof}[Proof of Theorem~\ref{Thm:sym2}] For a binary tree $B\in\B_M$ and $x$  a node of $B$, then $x$ is said to be {\em unbalanced} if one of its child is a leaf but the other child is an internal node. For example, the unbalanced nodes of the binary tree in  Fig.~\ref{invo3} (in left) are $2$ and $8$. Let $\Unb(B)$ be the set of all unbalanced nodes of $B$. Let 
$$
\psi(B):=\prod_{x\in\Unb(T)}\phi_x(B).
$$
It is plain to see that $\Unb(B)=\Unb(\psi(B))$ and so $\psi$ is an involution on $\B_M$. See Fig.~\ref{invo3} for an example of $\psi$, where unbalanced nodes are marked in red. Set $\Psi=\rho^{-1}\circ\psi\circ\rho$. As $\psi$ is an involution on $\B_M$,  $\Psi$ is an involution on $\T_M$.

In order to prove the first statement, we need to show that 
\begin{itemize}
\item[(a)] $\psi$ preserves the quadruple $(\snuleaf, \etleaf,\syleaf,\yerleaf)$;
\item[(b)] for any $B\in\B_M$, $\suleaf(B)=\entleaf(\psi(B))$. 
\end{itemize}
Here  the six statistics  $(\snuleaf, \etleaf,\syleaf,\yerleaf, \suleaf,\entleaf)$ over binary trees have their meanings as proved in Lemma~\ref{obv:rho2}. Assertion (a) follows from the fact that 
$$
(\snuleaf, \etleaf,\syleaf,\yerleaf)B=(\snuleaf, \etleaf,\syleaf,\yerleaf)\phi_x(B)
$$
whenever  $x$ is an unbalanced node of $B$. To see assertion (b), we note that 
\begin{align*}
\suleaf(B)&=|\{x\in\Unb(B): \text{ right child of $x$ is a leaf}\}|+\twin(B),\\
\entleaf(B)&=|\{x\in\Unb(B): \text{ left child of $x$ is a leaf}\}|+\twin(B),
\end{align*}
where $\twin(B)$ is the number of nodes of $B$ whose two children are leaves. Assertion (b) now follows from the fact that if $x$ is an unbalanced node whose right child is a leaf in $B$, then $x$ becomes an unbalanced node whose left child is a leaf in $\psi(B)$. This completes the proof of the first statement. 

 By Lemma~\ref{lem:phi}~$(iv)$, $\yint(B)$ is the  number of nodes having only right child in $B$ for any $B\in\B_M$. Thus, $\psi$ preserves the statistic ``$\yint$'', completing the proof of the theorem.
%By~\eqref{obs eq1}, we have 
%\begin{equation}\label{eq:yint}
%\yint(T)=|M|+1-2(\sleaf(T)+\eleaf(T))-\yleaf(T)
%\end{equation}
%for $T\in\T_M$. 
%The second statement then follows from the first statement and the fact that 
%\begin{align*}
%\sleaf(T)&=\suleaf(T)+\snuleaf(T),\\
% \eleaf(T)&=\etleaf(T)+\entleaf(T),\\
% \yleaf(T)&=\syleaf(T)+\yerleaf(T). 
%\end{align*}
%This completes the proof of the theorem.
\end{proof}

\subsection{A group action proof of Theorem~\ref{Thm:tip} and a related bijective problem of Dong et al.}

We use the preorder~\cite[Page~10]{St1} (i.e., recursively traversing the parent to the left subtree then to the right subtree) to order the nodes of binary trees. Fix a multiset $M$ with $|M|=m$. For $B\in\B_M$ and $i\in[m]$, suppose that  $x$ is the $i$-th (under preorder) node of $B$ and define 
$$
\varphi_i(B)=
\begin{cases}
\,\,\phi_x(B),\quad&\text{if $x$ has exactly one child in $B$};\\
\,\,B,&\text{otherwise}. 
\end{cases}
$$
It is clear that the order of the nodes in $B$ remains the same as in $\varphi_i(B)$ and so $\varphi^2_i(B)=B$. As $\varphi_i$ and $\varphi_j$ commute for all $i,j\in[m]$, for any subset $S\subseteq[m]$, we can define the function $\varphi_S: \B_M\rightarrow\B_M$ by
$$
\varphi_S(B)=\prod_{i\in S}\varphi_i(B).
$$
The group $\Z_2^m$ acts on $\B_M$ via the functions $\varphi_S$,  $S\subseteq[m]$. By Lemma~\ref{lem:phi}, we have 
\begin{equation}\label{eq:action2}
\sum_{T\in \T_M}x_1^{\oleaf(T)}x_2^{\yleaf(T)}y_1^{\oint(T)}y_2^{\yint(T)}=\sum_{B\in\B_M} (x_1y_1)^{\oleaf(B)}x_2^{\yleaf(B)}y_2^{\yint(B)},
\end{equation}
where $\oleaf(B)$ is the number of leaves in $B$, $\yleaf(B)$ (resp.,~$\yint(B)$) is the number of nodes having only left (resp.,~right) child in $B$. The following property of $\varphi_i$ is obvious from its definition. 

\begin{lemma}\label{lem:action}
For $B\in\B_M$ and $i\in[m]$, 
\begin{itemize}
\item[(i)] $\oleaf(B)=\oleaf(\varphi_i(B))$;
\item[(ii)] the $i$-th node of $B$ has only left (resp.,~right) child  if and only if the $i$-th node of $\varphi_i(B)$ has only right (resp.,~left) child. 
\end{itemize}
\end{lemma}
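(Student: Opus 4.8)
The plan is to reduce both assertions to the single local move $\phi_x$ and to exploit that $\varphi_i$ applies it only to a node with exactly one child. First I would recall from the definition that, writing $x$ for the $i$-th node of $B$ in preorder, one has $\varphi_i(B)=\phi_x(B)$ when $x$ has exactly one child and $\varphi_i(B)=B$ otherwise. When $x$ has zero or two children both assertions hold trivially: part (i) is an equality of $B$ with itself, and in part (ii) the $i$-th node of $B$ has neither only a left nor only a right child, so each side of the biconditional is false. Thus the entire content sits in the case where $x$ has exactly one child.

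In that case the first thing I would establish is that swapping the unique child of $x$ from one side to the other leaves the preorder sequence of every node unchanged: since one of the two subtrees of $x$ is empty, the traversal \emph{root, left subtree, right subtree} visits the same nodes in the same order before and after the swap. This makes precise the assertion already flagged in the text that ``the order of the nodes in $B$ remains the same as in $\varphi_i(B)$'', and in particular it guarantees that the $i$-th node of $\varphi_i(B)$ is again the very node $x$, so that part (ii) even parses as a statement about a single well-defined node.

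Granting preorder-invariance, part (i) is immediate because $\phi_x$ merely relocates the subtree hanging below $x$ from the left to the right position (or vice versa) without altering it internally; the node $x$ keeps exactly one child and so remains internal, while every other node retains its (unordered) pair of children. Hence the total number of leaves is unchanged, giving $\oleaf(B)=\oleaf(\varphi_i(B))$. Part (ii) then reads off directly: if $x$ has only a left child in $B$ it has only a right child in $\varphi_i(B)$, and symmetrically, which is exactly the claimed equivalence.

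The only step deserving genuine attention -- and the one I would flag as the main (if modest) obstacle -- is the preorder-invariance observation, because for a general node $\phi_x$ \emph{does} permute the preorder of $B$. It is precisely the restriction of $\varphi_i$ to nodes with a single child that rescues this, turning ``the $i$-th node'' into a swap-stable notion and simultaneously underwriting the identity $\varphi_i^2(B)=B$ and the commutativity of the maps $\varphi_i$ invoked in the surrounding group action.
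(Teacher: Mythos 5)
Your proof is correct and takes essentially the same route as the paper, which states the lemma as ``obvious from its definition'' after having already recorded exactly the preorder-invariance you single out (``the order of the nodes in $B$ remains the same as in $\varphi_i(B)$''). Your case analysis merely makes explicit the verification the paper leaves to the reader, and your identification of preorder-stability of the $i$-th node as the one point needing care is precisely the intended content.
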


We are ready for the proof of  Theorem~\ref{Thm:tip}. 
\begin{proof}[Proof of Theorem~\ref{Thm:tip}]
For any $B\in\B_M$, let $[B]:=\{g(B): g\in\Z_2^m\}$ be the orbit of $B$ under the $\Z_2^m$-action. By Lemma~\ref{lem:action}, each orbit $[B]$ contains a unique binary tree $\tilde B$ with $\yint(\tilde B)=0$. Thus, by Lemma~\ref{lem:action} we have
$$
\sum_{B\in[\tilde B]} (x_1y_1)^{\oleaf(B)}x_2^{\yleaf(B)}y_2^{\yint(B)}=(x_1y_1)^{\oleaf(\tilde B)}(x_2+y_2)^{\yleaf(\tilde B)}
$$
for any $\tilde B\in\B_M$ with $\yint(\tilde B)=0$. 
Summing over all orbits yields 
$$
\sum_{B\in\B_M} (x_1y_1)^{\oleaf(B)}x_2^{\yleaf(B)}y_2^{\yint(B)}=\sum_{\tilde B\in\B_M\atop\yint(\tilde B)=0}(x_1y_1)^{\oleaf(\tilde B)}(x_2+y_2)^{\yleaf(\tilde B)}.
$$
Combining with~\eqref{eq:action2} gives~\eqref{eq:tip}. 
\end{proof}

\begin{remark}
Under the bijection $\rho$, the $\Z_2^m$ group action used  above is essential the same as the generalized Foata--Strehl action on $\T_M$ introduced in~\cite[Section~7]{Lin20}. 
\end{remark}

 Let 
$$
\P_e(M,k):=\{T\in\T_M: \oleaf(T)=k, \text{$\yleaf(T)$ is even}\}
$$
and
$$
\P_o(M,k):=\{T\in\T_M: \oleaf(T)=k, \text{$\yleaf(T)$ is odd}\}.
$$
As
\begin{equation}\label{eq:all}
2\oleaf(T)+\yleaf(T)+\yint(T)=|M|+1
\end{equation}
for any $T\in\T_M$,
Setting $x_2=-1$ and $y_1=y_2=1$ in~\eqref{eq:tip} results in the following combinatorial assertion. 
\begin{corollary}\label{wit:eo}
Fix a multiset $M$ with $|M|=m$. Then, for $1\leq k\leq \lfloor\frac{m}{2}\rfloor$:
\begin{equation}\label{eqwit:eo}
|\P_e(M,k)|=|\P_o(M,k)|.
\end{equation}
\end{corollary}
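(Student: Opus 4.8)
The plan is to derive Corollary~\ref{wit:eo} as a direct specialization of Theorem~\ref{Thm:tip}, exactly as the surrounding text signals. First I would take the master identity~\eqref{eq:tip} and substitute $x_1=1$, $x_2=-1$, $y_1=1$, $y_2=1$. On the left-hand side the weight of a tree $T$ becomes $1^{\oleaf(T)}(-1)^{\yleaf(T)}1^{\oint(T)}1^{\yint(T)}=(-1)^{\yleaf(T)}$, so the left side collapses to a signed count that, after grouping trees by the common value $\oleaf(T)=k$, reads $\sum_{k}\bigl(|\P_e(M,k)|-|\P_o(M,k)|\bigr)$ with $\P_e,\P_o$ as defined. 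To isolate a fixed $k$, I would instead keep the variable $x_1$ (or equivalently $y_1$) alive as a bookkeeping parameter for $\oleaf$ and only set $x_2=-1$, $y_2=1$; then comparing coefficients of $x_1^k$ on both sides pins down the statement for each individual $k$.

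The decisive point is what happens to the right-hand side under this substitution. After setting $x_2=-1$ and $y_2=1$, the factor $(x_2+y_2)^{\yleaf(T)}$ becomes $(-1+1)^{\yleaf(T)}=0^{\yleaf(T)}$. This vanishes for every tip-augmented tree $T\in\A_M$ having at least one young leaf, and equals $1$ precisely for those $T\in\A_M$ with $\yleaf(T)=0$. Thus the entire right side reduces to $\sum_{T}(x_1y_1)^{\oleaf(T)}$ over the (possibly empty) set of $T\in\A_M$ with no young leaves. The key structural observation I would invoke is that a tip-augmented tree (no young internal node) with additionally no young leaf has, by the counting identity~\eqref{eq:all}, $2\oleaf(T)=|M|+1$, forcing $|M|$ to be odd and $\oleaf(T)=(|M|+1)/2$. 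Hence for $|M|=m$ and any $k$ in the stated range $1\le k\le\lfloor m/2\rfloor$, the right-hand coefficient of $x_1^k$ is zero: either $m$ is even (no such $T$ exists at all) or $m$ is odd and the only surviving $k$ is $(m+1)/2>\lfloor m/2\rfloor$. Either way the coefficient of $x_1^k$ on the right vanishes for every $k\le\lfloor m/2\rfloor$.

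Equating coefficients then gives $|\P_e(M,k)|-|\P_o(M,k)|=0$ for $1\le k\le\lfloor m/2\rfloor$, which is precisely~\eqref{eqwit:eo}. I would close by remarking that the displayed relation~\eqref{eq:all} is what guarantees both that $\oleaf(T)=k$ is the right grouping parameter and that the surviving term on the right sits outside the claimed range of $k$; this is the crux that makes the sign cancellation total rather than merely partial.

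The only genuinely delicate step is the bookkeeping at the substitution: one must be careful that setting $x_2=-1$ while retaining $x_1,y_1$ does not accidentally conflate the two leaf statistics, and that the parity of $\yleaf$ is read off correctly from the sign $(-1)^{\yleaf(T)}$ rather than from $\yint$ or $\oleaf$. I expect no serious obstacle beyond this coefficient-extraction hygiene, since Theorem~\ref{Thm:tip} already does all the combinatorial work; the corollary is an evaluation, and the main content is recognizing that $0^{\yleaf}$ annihilates every term whose $\oleaf$-value could fall in the range $1\le k\le\lfloor m/2\rfloor$.
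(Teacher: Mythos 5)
Your proof is correct and is essentially the paper's own argument: the paper obtains~\eqref{eqwit:eo} by exactly the same specialization, setting $x_2=-1$ and $y_1=y_2=1$ in~\eqref{eq:tip} while keeping $x_1$ as the bookkeeping variable for $\oleaf$, and using~\eqref{eq:all} (with $\yint=\yleaf=0$ on the tip-augmented side) to see that the only surviving right-hand terms have $\oleaf(T)=(m+1)/2>\lfloor m/2\rfloor$. Your coefficient-extraction details simply make explicit what the paper leaves implicit.
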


\begin{remark}
Corollary~\ref{wit:eo} for plane trees (i.e., $M=\{1^m\}$) was derived from~\eqref{rel:motz}
by Dong et al.~\cite{Dong1}, who also asked for a bijective proof. A bijective  proof of~\eqref{eqwit:eo} can be easily constructed as follows. By~\eqref{eq:all} and the condition  $1\leq k\leq \lfloor\frac{m}{2}\rfloor$, we deduce that $\yleaf(T)+\yint(T)>0$ for any $T\in\P_e(M,k)$. Given $T\in\P_e(M,k)$, find the first node (under preorder), say $x$,   having only one child in $\rho(T)$ and then set $T'=\rho^{-1}\phi_x(\rho(T))$. The map $T\mapsto T'$ sets up an one-to-one correspondence between $\P_e(M,k)$ and $\P_o(M,k)$.
\end{remark}
%%%%%%%%%%%%%%%%%%%%%%%%%%%%%%%%%%%%%%%%%%%%%%%%%%%%%%%%%%%%%%%%%%%%%%%%%%%%%%%%%%%%%%%%%%%%%%%%%

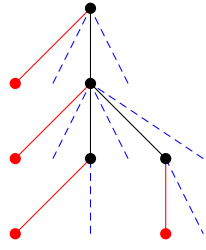
\begin{figure}
    \centering
    \begin{tikzpicture}
    \draw[densely dashed,blue] (0,0)--(0,1);   
    \draw[densely dashed,blue] (0.5,1)--(0,2);
    \draw[densely dashed,blue] (-0.5,1)--(0,2);
    \draw[densely dashed,blue] (1.5,1)--(0,2);
    \draw[densely dashed,blue] (-0.5,2)--(0,3);
    \draw[densely dashed,blue] (0.5,2)--(0,3);
    \draw[densely dashed,blue] (1,1)--(1.5,0);

    \draw[-]  (0,3)--(0,2);
    \draw[-]  (0,2)--(0,1);
    \draw[-]  (0,2)--(1,1);
    \draw[-,red]  (0,3)--(-1,2);
    \draw[-,red]  (0,2)--(-1,1);
    \draw[-,red]  (0,1)--(-1,0);
    \draw[-,red]  (1,1)--(1,0);

     \node at (0,3) [circle,fill,inner sep=1.5pt]{}; 
    \node at (0,2) [circle,fill,inner sep=1.5pt]{};
    \node at (0,1) [circle,fill,inner sep=1.5pt]{};
    
    \node at (-1,2) [circle,fill=red,inner sep=1.5pt]{};
    \node at (-1,1) [circle,fill=red,inner sep=1.5pt]{};
    \node at (-1,0) [circle,fill=red,inner sep=1.5pt]{};
    \node at (1,1) [circle,fill,inner sep=1.5pt]{};
    \node at (1,0) [circle,fill=red,inner sep=1.5pt]{};
\end{tikzpicture}

    \caption{An example of constructing a tip-augmented plane tree.}
    \label{fig:tip-augment}
\end{figure}

\begin{proof}[Proof of Corollary~\ref{thm:motz}]
By the plane tree case of Theorem~\ref{Thm:tip}, all we need to prove is 
\begin{equation}\label{def two-v}
    \sum_{T\in \A_{n+1}}u^{\oleaf(T)}v^{\yleaf(T)}=\sum_{k=0}^{\left\lfloor n/2\right\rfloor}\binom{n}{2 k} C_k u^{k+1} v^{n-2 k}.
\end{equation}
To form a tip-augmented plane tree with $k+1$ old leaves, we can first choose a plane trees with $k$ edges, append one node, as old leaf, to each of its $k+1$ nodes, and finally  insert $n-2k$ nodes, as young leaves, immediately to the right of some of the  $2k+1$ edges (allowing for repetition). See the illustration in Fig.~\ref{fig:tip-augment} for $k=3$. There are $C_k$ ways to choose a plane tree and $\left(\binom{2k+1}{n-2k}\right)=\binom{n}{2 k}$ ways to choose $n-2k$ positions to insert nodes in the final step, which proves~\eqref{def two-v}, as desired.  
\end{proof}

 It is well known (see~\cite{Chen1990}) that the number of plane trees with $n$ edges and $k$ leaves is the {\em Narayana number} $N_{n,k}=\frac{1}{n}{n\choose k}{n\choose k-1}$.  One consequence of Corollary~\ref{thm:motz} is the following explicit formula for the refined Narayana numbers
 $$
N_{n,k,l}:=|\{T\in\P_n: \oleaf(T)=k,\yleaf(T)=l\}|
 $$ 
 due to Chen, Deutsch and Elizalde~\cite{Chen1}. 

\begin{corollary}[Chen-Deutsch-Elizalde~\cite{Chen1}]
For $n,k\geq1$ and $l\geq0$, we have 
$$
N_{n,k,l}=\frac{1}{n}{n\choose k}{n-k\choose l}{n-k-l\choose k-1}. 
$$
\end{corollary}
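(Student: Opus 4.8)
The goal is to extract a closed formula for the refined Narayana numbers $N_{n,k,l}$ from Corollary~\ref{thm:motz}. The plan is to read off the bivariate generating identity already established and then isolate the coefficient of $u^k v^l$. By definition,
\begin{equation*}
\sum_{T\in\P_n}u^{\oleaf(T)}v^{\yleaf(T)}=\sum_{k\geq1,\,l\geq0}N_{n,k,l}\,u^k v^l,
\end{equation*}
while Corollary~\ref{thm:motz} (in the form~\eqref{def two-v} proved just above) asserts that this same polynomial equals $M_{n-1}(u,v)=\sum_{j}\binom{n-1}{2j}C_j\,u^{j+1}v^{n-1-2j}$. So the entire content of the corollary is a matching of coefficients, and the task reduces to verifying that the claimed formula agrees with this expansion.

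First I would set the exponents in the explicit Motzkin expansion equal to $k$ and $l$: from $u^{j+1}v^{n-1-2j}$ we need $j+1=k$, hence $j=k-1$, and then the power of $v$ is $n-1-2(k-1)=n-2k+1$. This already shows that $N_{n,k,l}$ vanishes unless $l=n-2k+1$, i.e.\ unless $2k+l=n+1$; one should note that this is exactly the plane-tree specialization of~\eqref{eq:all}, namely $2\,\oleaf+\yleaf+\yint=|M|+1$ with $\yint=0$ forced only on tip-augmented trees, so on general plane trees the constraint is $2k+l\le n+1$ and equality is not required. Hence the approach via Corollary~\ref{thm:motz} alone gives the coefficient only along the diagonal $l=n-2k+1$, and I would not expect the bare Motzkin expansion to yield the full three-parameter formula directly.

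The main obstacle, therefore, is that Corollary~\ref{thm:motz} as stated packages $\oint$ and $\yint$ together (the substitution $y_1=y_2$ collapses them), so it does not separate trees by $\yleaf$ alone in a way that exposes all three indices simultaneously. To recover $N_{n,k,l}$ for arbitrary admissible $l$, I would instead work from the tip-augmented construction in the proof of Corollary~\ref{thm:motz}: a plane tree with $k$ old leaves and $l$ young leaves is built by choosing a plane tree with $k$ edges (there are $C_k$, but more precisely the Narayana count refines this), distributing the $l$ young leaves among the available edge-slots, and accounting for the $\yint$ internal nodes via the group action of Theorem~\ref{Thm:tip}. Concretely, I would expand $(x_2+y_2)^{\yleaf}$ in~\eqref{eq:tip} and track the power of $x_2$ to isolate $\yleaf=l$, then count the tip-augmented trees with $k+1$ old leaves having $l$ young leaves by the insertion recipe: choose a $k$-edge plane tree and insert young leaves, which after reindexing $k\mapsto k$ produces the product of three binomials. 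I expect the combinatorial identity
\begin{equation*}
\sum_{j}\binom{2k+1}{j}\binom{n-j}{2k}\,C_k \;=\; \frac{1}{n}\binom{n}{k}\binom{n-k}{l}\binom{n-k-l}{k-1}
\end{equation*}
(after matching the insertion count to the Narayana refinement) to be the technical heart; verifying it amounts to a standard Vandermonde-type manipulation together with the identity $\frac{1}{n}\binom{n}{k}\binom{n}{k-1}=N_{n,k}$ for the Narayana numbers. The cleanest route may well be to cite the insertion bijection already described for Fig.~\ref{fig:tip-augment}, refine it to track the number of old leaves in the underlying plane tree, and read off the three binomial factors directly as (i) the Narayana count of $k$-leaf $k$-edge configurations, (ii) the placement of $l$ young leaves, and (iii) the residual choices.
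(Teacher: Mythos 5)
There is a genuine gap, and it sits at the very first step: you read Corollary~\ref{thm:motz} as saying $\sum_{T\in\P_n}u^{\oleaf(T)}v^{\yleaf(T)}=M_{n-1}(u,v)$, but that identity is false over $\P_n$; it is~\eqref{def two-v}, which holds only over the tip-augmented trees $\A_{n+1}$. What~\eqref{rel:motz} actually says is that the \emph{four}-variable generating function equals $M_{n-1}(x_1y_1,\,x_2+y_2)$, with the second argument shifted. You partially notice the resulting inconsistency (the spurious constraint $l=n+1-2k$), but you then draw the wrong conclusion, namely that ``the approach via Corollary~\ref{thm:motz} alone'' cannot produce the three-parameter formula and that setting $y_1=y_2$ ``does not separate trees by $\yleaf$.'' In fact the paper's proof is precisely this route: setting $y_1=y_2=1$ in~\eqref{rel:motz} leaves $\sum_{T\in\P_n}x_1^{\oleaf(T)}x_2^{\yleaf(T)}=\sum_{k\ge0}\binom{n-1}{2k}C_k\,x_1^{k+1}(1+x_2)^{n-1-2k}$, and the binomial expansion of the \emph{shifted} variable $(1+x_2)^{n+1-2k}$ is exactly what spreads the diagonal Motzkin coefficient across all $l$ with $2k+l\le n+1$. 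Extracting $[x_1^kx_2^l]$ gives $N_{n,k,l}=\binom{n-1}{2k-2}C_{k-1}\binom{n+1-2k}{l}$, and a direct factorial computation (both sides equal $\frac{(n-1)!}{k!\,(k-1)!\,l!\,(n+1-2k-l)!}$) yields the stated product $\frac1n\binom nk\binom{n-k}{l}\binom{n-k-l}{k-1}$. Your error was substituting $v=x_2$ where the corollary has $v=x_2+y_2$.

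Your fallback combinatorial route has the right spirit (each plane tree with $\oleaf=k$ corresponds, via the $\Z_2^m$-orbit structure of Theorem~\ref{Thm:tip}, to a tip-augmented tree with $n+1-2k$ one-child nodes together with a choice of which $l$ of them carry a young leaf, which would again give $\binom{n-1}{2k-2}C_{k-1}\binom{n+1-2k}{l}$), but as written it does not close. The displayed identity $\sum_{j}\binom{2k+1}{j}\binom{n-j}{2k}C_k=\frac1n\binom nk\binom{n-k}{l}\binom{n-k-l}{k-1}$ is incoherent: its left side is independent of $l$ while its right side is not, and no summation arises in the count at all --- the enumeration is a plain product. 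Moreover the one step that actually needs doing, the verification that the product of three binomials equals the Chen--Deutsch--Elizalde form, is deferred to ``a standard Vandermonde-type manipulation,'' when in fact no Vandermonde identity is involved; it is a one-line cancellation of factorials. So neither branch of your argument, as it stands, proves the corollary.
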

\begin{proof}
Setting $y_1=y_2=1$ in~\eqref{rel:motz} gives
$$
\sum_{T\in\P_n}x_1^{\oleaf(T)}x_2^{\yleaf}=\sum_{k=0}^{\left\lfloor (n-1)/2\right\rfloor}\binom{n-1}{2 k} C_k x_1^{k+1} (1+x_2)^{n-1-2 k}. 
$$
Extracting the coefficient of $x_1^kx_2^l$ yields
$$
N_{n,k,l}={n-1\choose 2k-2}C_{k-1}{n+1-2k\choose l}=\frac{1}{n}{n\choose k}{n-k\choose l}{n-k-l\choose k-1},
$$
as desired. 
\end{proof}

%%%%%%%%%%%%%%%%%%%%%%%%%%%%%%%%%%%%%%%%%%%%%%%
\subsection{Generating function for $(\sleaf,\etleaf,\entleaf,\yerleaf,\syleaf)$ on plane trees}%%
%%%%%%%%%%%%%%%%%%%%%%%%%%%%%%%%%%%%%%%%%%%%%%%
Consider the refined Narayana polynomials 
$$
G_n(x_1,x_2,x_3,x_4):=\sum_{T\in \P_n}x_1^{\sleaf(T)}x_2^{\eleaf(T)}x_3^{\yleaf(T)}x_4^{\yint(T)}\quad\text{for $n\geq2$}
$$
and $G_1(x_1,x_2,x_3,x_4)=x_2$. The main results of~\cite{Dong1} are two formulas, derived from Chen's context-free grammar,  for the  generating functions  of $G_n(x_1,x_2,x_3,x_4)$ and the refined Motzkin polynomials $M_n(u_1,u_2,u_3;v_1,v_2)$ defined in the introduction. Using binary trees, we can prove a unified formula that generalizes their two formulas.

We consider the following new refined Narayana polynomials.
\begin{definition}[New refined Narayana polynomials]
Let
$$
N_n=N_n(u_1,u_2,u_3;v_1,v_2):=\sum_{T\in\P_n}u_1^{\sleaf(T)} u_2^{\etleaf(T)} u_3^{\entleaf(T)} v_1^{\yerleaf(T)} v_2^{\syleaf(T)}
$$
for $n\geq2$ and $N_1=N_1(u_1,u_2,u_3;v_1,v_2):=u_1$ by convention. 
\end{definition}

The  polynomials $N_n$ for $2\leq n\leq5$ are listed as follows:
    \begin{align*}
       N_2=&u_1+u_2v_2,\\
        N_3=&u_1+u_1u_3+u_1v_2+u_2v_2+u_2v_1v_2,\\
        N_4=&u_2v_1^2v_2+u_1u_2v_2+u_1u_3v_1+u_1v_1v_2+u_2u_3v_2+u_2v_1v_2+u_2v_2^2\\
    &+u_1^2+2u_1u_3+2u_1v_2+u_2v_2+u_1,\\
        N_5=&u_2v_1^3v_2+2u_1u_2v_1v_2+u_1u_3v_1^2+u_1v_1^2v_2+u_2^2v_2^2+2u_2u_3v_1v_2+u_2v_1^2v_2\\
        &+2u_2v_1v_2^2+u_1^2u_3+u_1^2v_1+u_1^2v_2+4u_1u_2v_2+u_1u_3^2+2u_1u_3v_1+2u_1u_3v_2\\
        &+2u_1v_1v_5+u_1v_2^2+2u_2u_3v_2+u_2v_1v_2+2u_2v_2+3u_1^2+3u_1u_3+3u_1v_2\\
        &+u_2v_2+u_1.
    \end{align*}
    By Lemma~\ref{obv:rho2}, we have
$$N_n=\sum_{B\in\B_n}u_1^{\sleaf(B)} u_2^{\etleaf(B)} u_3^{\entleaf(B)} v_1^{\yerleaf(B)} v_2^{\syleaf(B)},$$
where $\sleaf(B)$ (resp.,~$\etleaf(B)$, $\entleaf(B)$, $\yerleaf(B)$, $\syleaf(B)$) is the number of right leaves (resp., left leaves whose parent has no right child, left leaves whose parent has right child, nodes  with only left child and whose left child has left child, nodes  with only left child and whose left child has no left child) of $B$.
Fig.~\ref{fig:B3} provides all binary trees in $\B_3$, where the statistics are marked by their corresponding variables.

\begin{figure}%[h]
    \centering
    \begin{tikzpicture}[scale=0.8]
        %%%%%第一棵树
        \draw[-] (0,4) -- (-1,3);
        \draw[-] (-2,2) -- (-1,3);

        \node at (0,4) [circle,fill,inner sep=1.5pt]{};\node at (0.1,3.7) {$v_1$};
        \node at (-1,3) [circle,fill,inner sep=1.5pt]{};\node at (-0.8,2.7) {$v_2$};
        \node at (-2,2) [circle,fill,inner sep=1.5pt]{};\node at (-2,1.7) {$u_2$};

        %%%%%第二棵树
        \draw[-] (3,4) -- (2,3);
        \draw[-] (3,2) -- (2,3);

        \node at (3,4) [circle,fill,inner sep=1.5pt]{};\node at (3.1,3.7) {$v_2$};
        \node at (2,3) [circle,fill,inner sep=1.5pt]{};\node at (1.8,2.7) {};
        \node at (3,2) [circle,fill,inner sep=1.5pt]{};\node at (3.1,1.7) {$u_1$};

        %%%%%第三棵树
        \draw[-] (6,4) -- (5,3);
        \draw[-] (6,4) -- (7,3);

        \node at (6,4) [circle,fill,inner sep=1.5pt]{};\node at (6,3.7) {};
        \node at (5,3) [circle,fill,inner sep=1.5pt]{};\node at (5,2.7) {$u_3$};
        \node at (7,3) [circle,fill,inner sep=1.5pt]{};\node at (7,2.7) {$u_1$};

        %%%%%第四棵树
        \draw[-] (9,4) -- (10,3);
        \draw[-] (10,3) -- (9,2);

        \node at (9,4) [circle,fill,inner sep=1.5pt]{};\node at (6,3.7) {};
        \node at (10,3) [circle,fill,inner sep=1.5pt]{};\node at (10.2,2.7) {$v_2$};
        \node at (9,2) [circle,fill,inner sep=1.5pt]{};\node at (9,1.7) {$u_2$};

        %%%%%第五棵树
        \draw[-] (12,4) -- (13,3);
        \draw[-] (13,3) -- (14,2);

        \node at (12,4) [circle,fill,inner sep=1.5pt]{};\node at (6,3.7) {};
        \node at (13,3) [circle,fill,inner sep=1.5pt]{};
        \node at (14,2) [circle,fill,inner sep=1.5pt]{};\node at (14,1.7) {$u_1$};
    \end{tikzpicture}
    \caption{3-node binary trees}
    \label{fig:B3}
\end{figure}
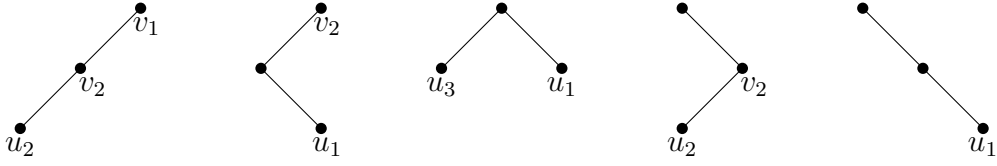

\begin{theorem}\label{thm:genc}
The expression of $N(t):=\sum_{n\geq1}N_nt^n$ is given by
\begin{equation}\label{gen Bt}
    N(t)=\frac{at^2-ct+1}{2t}-\frac{\sqrt{a^2t^4+2(bc-2u_1-2u_2v_2)t^3+(c^2-2b)t^2-2ct+1}}{2t},
\end{equation}
where $a=u_1-u_3+v_1-v_2$, $b=u_1+u_3-v_1+v_2$ and $c=v_1+1$.  
\end{theorem}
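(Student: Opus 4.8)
The plan is to work entirely on the binary tree side, where by Lemma~\ref{obv:rho2} the five statistics have become \emph{local}: $\sleaf$ counts right leaves, $\etleaf$ (resp.~$\entleaf$) counts left leaves whose parent has no right child (resp.~has a right child), and $\yerleaf$ (resp.~$\syleaf$) counts nodes having only a left child whose left child does (resp.~does not) have a left child, so that $N(t)=\sum_{B}w(B)t^{|B|}$ for the evident weight monomial $w(B)$. The key observation is that every contribution is determined by a node together with its children and grandchildren \emph{except} for the leaf-type ($\sleaf$/$\etleaf$/$\entleaf$) of the root of a subtree, which is fixed only once that subtree is grafted onto its parent. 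First I would introduce three auxiliary series $R,L^+,L^-$ summing $w(B)t^{|B|}$ over all binary trees while weighting a single-leaf root as a right leaf ($u_1$), a left leaf with a right sibling present ($u_3$), and a left leaf with no right sibling ($u_2$), respectively. For internal-rooted trees these three all agree with one common series $I$, whence $R=u_1t+I$, $L^+=u_3t+I$, and $L^-=u_2t+I$.

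Next I would decompose an internal-rooted binary tree by the children of its root. A root with only a right subtree contributes $tR$; a root with both subtrees contributes $tL^+R$; and a root with only a left subtree contributes a $\yerleaf$- or $\syleaf$-weight according to whether the root of that left subtree itself has a left child, so this case must be refined. Writing $L^-_{\mathrm{hasL}}$ and $L^-_{\mathrm{noL}}$ for the parts of $L^-$ over trees whose root does, resp.~does not, have a left child, one reads off $L^-_{\mathrm{noL}}=u_2t+tR$ and $L^-_{\mathrm{hasL}}=t v_1 L^-_{\mathrm{hasL}}+t v_2 L^-_{\mathrm{noL}}+tL^+R$, and consequently $I=t v_1 L^-_{\mathrm{hasL}}+t v_2 L^-_{\mathrm{noL}}+tR+tL^+R$. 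The point of the convention $N_1=u_1$ is precisely that it makes the single-node tree carry the right-leaf weight, so that $N(t)=u_1t+I=R$; it therefore suffices to solve for $R$.

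The elimination is then routine. From $R=u_1t+I$ together with the relations above one gets $I=L^-_{\mathrm{hasL}}+tR$, hence $L^-_{\mathrm{hasL}}=R(1-t)-u_1t$ and $L^+=(u_3-u_1)t+R$; substituting these back into the equation for $L^-_{\mathrm{hasL}}$ collapses the entire system to the single quadratic
\[
tR^2-(at^2-ct+1)R+u_1t+(u_2v_2-u_1v_1)t^2=0,
\]
where $c=v_1+1$ and $a=u_1-u_3+v_1-v_2$ emerge upon recognizing the linear coefficient $1-(1+v_1)t+(u_1-u_3+v_1-v_2)t^2$. Solving and keeping the branch with $R=u_1t+O(t^2)$ yields the claimed expression once the discriminant $(at^2-ct+1)^2-4t\big(u_1t+(u_2v_2-u_1v_1)t^2\big)$ is rewritten in $a,b,c$. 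I expect the only genuine subtlety to be this final simplification: matching the $t^3$- and $t^2$-coefficients with the target forces the identity $a+b=2u_1$, which does hold since $b=u_1+u_3-v_1+v_2$, and it is exactly this identity (via $v_1+1=c$) that turns $-2ac+4u_1v_1$ into $2bc-4u_1$ and $2a-4u_1$ into $-2b$. The real conceptual work lies upstream, in the correct bookkeeping of the boundary leaf-types and of the $\yerleaf$/$\syleaf$ split in the left-only case; once those are set up, the remaining computation is completely forced.
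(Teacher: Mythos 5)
Your proposal is correct and takes essentially the same route as the paper: both decompose a binary tree at its root, tracking how the leaf-type of a grafted subtree's root is fixed by its parent context, and arrive at the identical quadratic $tN^2-(at^2-ct+1)N+u_1t+(u_2v_2-u_1v_1)t^2=0$, which the paper obtains in a single display with inline correction terms (e.g.\ $u_3t-u_1t$, $N-u_1t-tN$) rather than via your auxiliary series $R$, $L^+$, $L^-_{\mathrm{hasL}}$, $L^-_{\mathrm{noL}}$. Your more systematic bookkeeping in fact exposes a slip in the paper's case (4b), whose contribution should be $v_2t^2N(t)$ rather than the printed $u_2t^2N(t)$ (the root there is a $\syleaf$-node), and only this corrected equation --- the one you derived --- solves to the stated formula~\eqref{gen Bt}.
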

\begin{proof}
    We derive the functional equation for $N(t)$ using the classical  decomposition of binary trees $B=(B_g,r,B_d)$, where $r$ is the root of $B$ and $B_g,B_d$ are respectively the left and right branches  at $r$. We distinguish four cases below. 
    \begin{itemize}
\item [(1)] $B_g=B_d=\emptyset$. 
This case contributes $u_1t$ to $N(t)$.

\item [(2)] $B_g=\emptyset$ and $B_d\neq\emptyset$. 
This case contributes  $tN(t)$.
    
\item [(3)] $B_g\neq\emptyset$ and $B_d\neq\emptyset$. This case contributes $t(N(t)+u_3t-u_1t)N(t)$, as when $B_g$ contains only one node, then such node should contributes $u_3t$ other than $u_1t$.

\item [(4)] $B_g\neq\emptyset$ and $B_d=\emptyset$. We further distinguish three subcases.
\begin{itemize}
\item[(4a)] $B_g$ contains only one node. This case contributes $u_2v_2t^2$.
\item[(4b)] The root of $B_g$ has only right child. This case contributes $u_2t^2N(t)$.
\item[(4c)]  $B_g$ is not in cases (4a) and (4b). 
This case contributes $v_1t(N(t)-u_1t-tN(t))$.
  \end{itemize}
\end{itemize}
Summing over all the contributions from the above cases yields the functional equation for $N=N(t)$:
$$
N=t(u_1+N+(N+u_3t-u_1t)N+u_2v_2t+u_2tN+v_1(N-u_1t-tN)).
$$
Solving this equation gives~\eqref{gen Bt}.
\end{proof}

\begin{remark}
By~\eqref{eq:all}, we have 
$$
\yint(T)=n+1-2\oleaf(T)-\yleaf(T)
$$
for $T\in\P_n$. 
Thus, $N_n(u_1,u_2,u_3;v_1,v_2)$ are common generalization of $G_n(x_1,x_2,x_3,x_4)$ and $M_n(u_1,u_2,u_3;v_1,v_2)$. The main results in~\cite[Theorems~1.6 and~1.9]{Dong1} can then be deduced from Theorem~\ref{thm:genc} after appropriate substitutions. 
\end{remark}

 %%%%%%%%%%%%%%%%%%%%%%%%%
\subsection{Deutsch's bijection via binary trees}%%%%
%%%%%%%%%%%%%%%%%%%%%%%%%
For $T\in\T_M$, the {\em level} of a node in $T$ is the number of edges on the path from the root to it. The {\em degree} of a node is the number of its children. 
Six concerned statistics on $T$  are: 
\begin{itemize}
\item The number of nodes of degree $q$ in $T$, denoted by $\deg_q(T)$.  In particular, $\deg_0(T)=\leaf(T)$, the number of leaves of $T$. 
\item The number of nodes of degree $q$ in odd levels of $T$, denoted by $\od_q(T)$.
\item The number of nodes in even levels of $T$, denoted by $\el(T)$.
\item The number of internal  nodes in even levels of $T$, denoted by $\elint(T)$.
\item The number of leaves in even levels of $T$, denoted by $\elleaf(T)$.
\item A non-root node  of $T$ is considered as {\em youngest} if it is the rightmost child of its parent. The number of youngest leaves of $T$ is denoted by $\ystleaf(T)$.
\end{itemize}
Inspired by the work of Kuznetsov, Pak and  Postnikov~\cite{KPP}, Deutsch~\cite{Deut} introduced a recursive bijection $\widehat{(\,)}: \P_n\rightarrow\P_n$  such that for each $T\in\mathcal{P}_n$:
 $$\deg_q(T)=\left\{\begin{array}{lll}\od_{q-1}(\widehat{T})&\text{if}&q\geq 1,\\
\el(\widehat{T})&\text{if}&q=0.\end{array}\right.$$
This bijection was extended independently by Lin--Ma~\cite{LM} and Chen--Fu~\cite{CFu} from plane trees to weakly increasing trees. For the sake of convenience, we recall the recursive construction of $\widehat{(\,)}: \T_M\rightarrow\T_M$ from~\cite{LM}.

 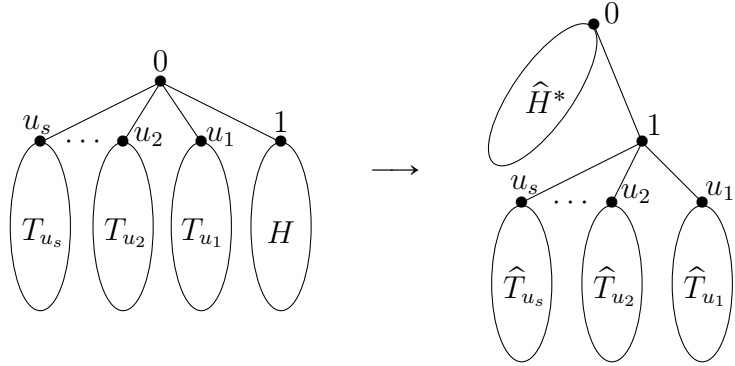
\begin{figure}
\centering
\begin{tikzpicture}[scale=0.4]
\node at (5,10) {$\bullet$};
\node at (5,10.7) {$0$};
\draw[-] (5,10) to (1,8);\draw[-] (5,10) to (3.75,8);\draw[-] (5,10) to (6.35,8);\draw[-] (5,10) to (9,8);
\node at (1,8) {$\bullet$};
\node at (0.9,8.6) {$u_s$};
\draw (1,5.2) ellipse (1 and 2.8);

\node at (2.5,8) {$\cdots$};

\node at (3.75,8) {$\bullet$};
\node at (4.6,8.2) {$u_2$};
\draw (3.75,5.2) ellipse (1 and 2.8);

\node at (6.35,8) {$\bullet$};
\node at (7,8.2) {$u_1$};
\draw (6.35,5.2) ellipse (1 and 2.8);
\node at (9,8) {$\bullet$};
\node at (9,8.7) {$1$};
\draw (9,5.2) ellipse (1 and 2.8);

\node at (1.1,5) {$T_{u_s}$};
\node at (3.85,5) {$T_{u_2}$};
\node at (6.4,5) {$T_{u_1}$};
\node at (9,5) {$H$};

\draw [rotate=-35] (9,18) ellipse (1 and 2.8) ;

\node at (12.8,7) {$\longrightarrow$};
%%%%%%%%%right
\node at (19.4,11.9) {$\bullet$};
\node at (20,12.3) {$0$};

\node at (17.8,9.5) {$\widehat{H}^*$};
\draw[-] (19.4,11.9) to (21,8);
\node at (21,8) {$\bullet$};
\node at (21.4,8.6) {$1$};
\draw[-] (21,8) to (17,6);
\node at (17,6) {$\bullet$};
\node at (17.1,6.6) {$u_s$};
\draw (17,3.3) ellipse (1 and 2.6);
\draw[-] (21,8) to (23,6);
\draw (23,3.3) ellipse (1 and 2.6);
\node at (23,6) {$\bullet$};
\node at (23.6,6.4) {$u_1$};
\draw[-] (21,8) to (20,6);
\draw (20,3.3) ellipse (1 and 2.6);
\node at (20,6) {$\bullet$};
\node at (20.8,6.3) {$u_2$};
\node at (18.7,6) {$\cdots$};
\node at (17.1,3.2) {$\widehat{T}_{u_s}$};
\node at (20.1,3.2) {$\widehat{T}_{u_2}$};
\node at (23.1,3.2) {$\widehat{T}_{u_1}$};

\end{tikzpicture}
\caption{The construction of $\widehat{T}$.\label{deu:bij}}
\end{figure}

\begin{framed}
 \begin{center}
{\bf The map $\widehat{(\,)}: \T_M\rightarrow\T_M$} 
\end{center}

Firstly, set $\widehat{\emptyset}=\emptyset$.
Let $T\in\mathcal{T}_M$.  For a node $v$ of $T$ let $T_v$ denote the  subtree of $T$ rooted at $v$. Clearly, the rightmost child of the root $0$ in $T$ must be a node with label $1$ and let $H$ denote the subtree of $T$ rooted at this $1$. 
Suppose that  all children of the root $0$  other than the rightmost child in $T$ from right to left are $u_1,\ldots,u_s$. We construct recursively the weakly increasing tree $\widehat{T}$ as follows:
\begin{itemize}
\item Change the label $1$ of the root of $\widehat{H}$ to $0$ and denote by $\widehat{H}^*$ the resulting tree;
\item Attach an isolate node $1$ to the root $0$ of $\widehat{H}^*$ as its rightmost child;
\item Make $\widehat{T}_{u_1},\ldots, \widehat{T}_{u_s}$ the branches of the above new node $1$ from right to left.
\end{itemize}
 See Fig.~\ref{deu:bij} for the visualization of $\widehat{(\,)}$.
 \end{framed}

 The bijection $\widehat{(\,)}: \T_M\rightarrow\T_M$ can be constructed directly using binary trees as follows. 
Let $B\in\B_M$ be a binary tree. A node is called a {\em head} in $B$ if  either it is the root or it is a right child in $B$. Let $H(B)$ be the set of all heads in $B$. For instance, if $B$ is the second tree in Fig.~\ref{theta}, then $H(B)=\{1,2,4,8,9\}$.  Define the map $\theta:\B_M\rightarrow\B_M$ by 
$$
\theta(B)=\prod_{x\in H(B)}\phi_x(B).
$$
See Fig.~\ref{theta} for an example of $\theta$. 
 
 \begin{figure}%[h]
\centering

\begin{tikzpicture}[scale=0.9]

% First Tree
\node at (0,4) {$\bullet$}; \node at (0,4.3) {$0$};
\node at (-1,3) {$\bullet$}; \node at (-1.2,3.2) {$7$};
\node at (0,3) {$\bullet$};\node at (0,2.7) {$6$};
\node at (1,3) {$\bullet$};\node at (1.2,3.2) {$1$};
\node at (-1.5,2) {$\bullet$};\node at (-1.5,1.7) {$10$};
\node at (-0.5,2) {$\bullet$};\node at (-0.7,1.7) {$8$};
\node at (0.5,2) {$\bullet$};\node at (0.3,1.8) {$3$};
\node at (1.5,2) {$\bullet$};\node at (1.5,1.7) {$2$};
\node at (-0.5,1) {$\bullet$};\node at (-0.5,0.7) {$9$};
\node at (0.75,1) {$\bullet$};\node at (0.75,0.7) {$4$};
\node at (0.25,1) {$\bullet$};\node at (0.25,0.7) {$5$};

\draw[-] (0,4) -- (-1,3);
\draw[-] (0,4) -- (0,3);
\draw[-] (0,4) -- (1,3);
\draw[-] (-1,3) -- (-1.5,2);
\draw[-] (-1,3) -- (-0.5,2);
\draw[-] (1,3) -- (0.5,2);
\draw[-] (1,3) -- (1.5,2);
\draw[-] (-0.5,2) -- (-0.5,1);
\draw[-] (0.5,2) -- (0.75,1);
\draw[-] (0.5,2) -- (0.25,1);

% First Arrow
\draw[->] (2,2) -- (2.6,2);
\node at (2.3,2.2) {$\rho$};

% Second Tree
\draw[-] (4.8,3.6) -- (5.6,2.8);
\draw[-] (4.8,3.6) -- (3.8,2.8);
\draw[-] (3.8,2.8) -- (3,2);
\draw[-] (3,2) -- (3.8,1.2);
\draw[-] (3.8,1.2) -- (4.4,0.4);
\draw[-] (3.8,1.2) -- (3,0.4);
\draw[-] (5.6,2.8) -- (4.8,2);
\draw[-] (4.8,2) -- (5.6,1.2);
\draw[-] (5.6,1.2) -- (5,0.4);

\node at (4.8,3.6) {\magenta{$\bullet$}};\node at (4.8,3.9) {$1$};
\node at (3.8,2.8) {$\bullet$};\node at (3.8,3.1) {$6$};

\node at (5.6,2.8) {\magenta{$\bullet$}};\node at (5.8,2.8) {$2$};
\node at (3,2) {$\bullet$};\node at (3,1.7) {$7$};
\node at (4.8,2) {$\bullet$};\node at (4.6,2) {$3$};

\node at (3,0.4) {$\bullet$};\node at (3,0.1) {$10$};

\node at (3.8,1.2) {\magenta{$\bullet$}};\node at (4,1.4) {$8$};

\node at (5.6,1.2) {\magenta{$\bullet$}};\node at (5.8,1.2) {$4$};
\node at (4.4,0.4) {\magenta{$\bullet$}};\node at (4.4,0.1) {$9$};

\node at (5,0.4) {$\bullet$};\node at (5,0.1) {$5$};

% Second Arrow
\draw[->] (6,2) -- (6.6,2);
\node at (6.3,2.25) {$\theta$};

% Third Tree
\draw[-] (8.4,3.6) -- (7.2,2.8);
\draw[-] (8.4,3.6) -- (9.6,2.8);
\draw[-] (8.8,2) -- (9.6,2.8);
\draw[-] (8.8,2) -- (9.6,1.2);
\draw[-] (8.8,0.4) -- (9.6,1.2);
\draw[-] (7.2,2.8) -- (8.4,0.4);
\draw[-] (10.4,0.4) -- (9.6,1.2);
\node at (7.2,2.8) {\magenta{$\bullet$}};\node at (7.2,3.1) {$2$};

\node at (8.4,3.6) {\magenta{$\bullet$}};\node at (8.4,3.9) {$1$};
\node at (9.6,2.8) {$\bullet$};\node at (9.6,2.5) {$6$};

\node at (8.8,2) {$\bullet$};\node at (8.8,2.4) {$7$};
\node at (9.6,1.2) {\magenta{$\bullet$}};\node at (9.6,1.5) {$8$};
\node at (8.8,0.4) {\magenta{$\bullet$}};\node at (9,0.3) {$9$};
\node at (10.4,0.4) {$\bullet$};\node at (10.1,0.3) {$10$};

\node at (8.4,0.4) {$\bullet$};\node at (8.1,0.4) {$5$};
\node at (8,1.2) {\magenta{$\bullet$}};\node at (7.8,1.2) {$4$};
\node at (7.6,2) {$\bullet$};\node at (7.4,2) {$3$};

% Third Arrow
\draw[->] (10.2,2) -- (10.9,2);
\node at (10.6,2.3) {$\rho^{-1}$};

% Fourth Tree
\node at (12.6,4) {$\bullet$}; \node at (12.6,4.3) {$0$};
\draw[-] (12.6,4) -- (11.5,3.2);
\draw[-] (12.6,4) -- (13.7,3.2);
\node at (13.7,3.2) {$\bullet$};\node at (13.7,3.5) {$1$};
\draw[-] (13.7,3.2) -- (12.3,1.2);\node at (13,2.2) {$\bullet$};\node at (13,2.5) {$7$};
\node at (12.3,1.2) {$\bullet$};\node at (12.5,1.2) {$9$};
\draw[-] (13.7,3.2) -- (14.4,2.2);\node at (14.4,2.2) {$\bullet$};\node at (14.4,1.9) {$6$};
\draw[-] (13,2.2) -- (13.7,1.2);\node at (13.7,1.2) {$\bullet$};\node at (13.9,1.2) {$8$};
\draw[-] (13.7,1.2) -- (13.7,0.2);\node at (13.7,0.2) {$\bullet$};\node at (14,0.2) {$10$};

\draw[-] (11.5,3.2) -- (11.5,0.2);
\node at (11.5,3.2) {$\bullet$};\node at (11.5,3.5) {$2$};
\node at (11.5,2.2) {$\bullet$};\node at (11.7,2.2) {$3$};
\node at (11.5,1.2) {$\bullet$};\node at (11.7,1.2) {$4$};
\node at (11.5,0.2) {$\bullet$};\node at (11.7,0.2) {$5$};

\end{tikzpicture}
\caption{An example of the bijection $\Theta=\rho^{-1}\circ\theta\circ\rho$.\label{theta}}
\end{figure}
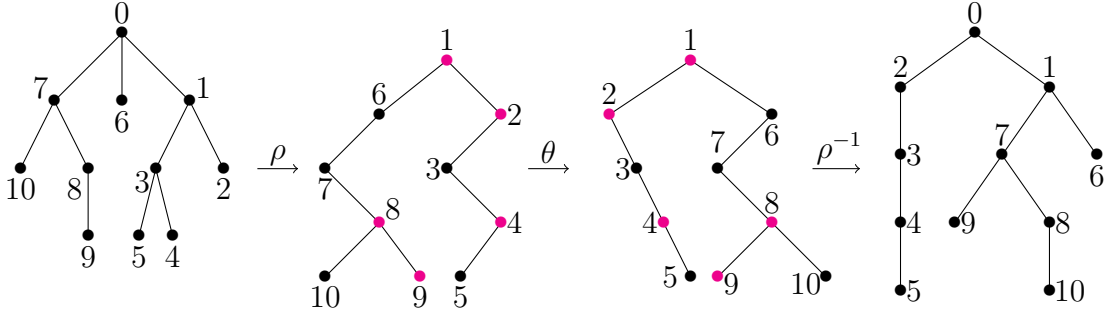
 
 \begin{theorem}
 Set $\Theta=\rho^{-1}\circ\theta\circ\rho$. For any $T\in\T_M$, $\Theta(T)=\widehat{T}$. 
 \end{theorem}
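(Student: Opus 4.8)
The plan is to prove the equivalent identity $\theta(\rho(T))=\rho(\widehat{T})$ for every $T\in\T_M$ by induction on $|M|$, matching the recursive construction of $\widehat{(\,)}$ against the way $\theta$ decomposes under $\rho$. The base case (a single edge) is immediate, so the work is in the inductive step.

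First I would record the top-level decomposition of $\rho(T)$ dictated by the children of the root. Writing the children of the root $0$ from right to left as $h,u_1,\ldots,u_s$, so that $h$ is the rightmost child (carrying label $1$) and $H=T_h$, the child--sibling reading of $\rho$ turns $h$ into the root $r$ of $\rho(T)$, makes the right branch at $r$ equal to $\rho(H)$, and strings $u_1,u_2,\ldots,u_s$ down a left spine hanging off $r$, with $\rho(T_{u_i})$ attached as the right branch at $u_i$. Then I would analyze how $\theta$ acts on this picture. Since $H(\rho(T))$ consists of the root together with all right children, the spine nodes $u_1,\ldots,u_s$ are left children and hence \emph{not} heads, so the switching fixes the spine; by contrast $r$, the root of $\rho(H)$, and the root of each $\rho(T_{u_i})$ are heads. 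The crucial bookkeeping point is that the head set of $\rho(T)$, restricted to the subtree $\rho(H)$ (resp.\ to $\rho(T_{u_i})$), coincides with the head set of that subtree viewed on its own; hence the portion of $\theta$ acting there is exactly $\theta(\rho(H))$ (resp.\ $\theta(\rho(T_{u_i}))$), which by the induction hypothesis equals $\rho(\widehat{H})$ (resp.\ $\rho(\widehat{T}_{u_i})$). Applying $\phi_r$ at the root finally exchanges the two top branches, so $\theta(\rho(T))$ has root $r$, left branch $\rho(\widehat{H})$, and right branch the left spine $u_1,\ldots,u_s$ carrying the subtrees $\rho(\widehat{T}_{u_i})$.

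Next I would compute $\rho(\widehat{T})$ directly from the recursive definition and check it agrees term by term. In $\widehat{T}$ the new node $1$ is the rightmost child of the root $0$ and carries $\widehat{T}_{u_1},\ldots,\widehat{T}_{u_s}$ as its branches from right to left, while the remaining children of $0$ are exactly the children of the root of $\widehat{H}$; note that the relabeling $1\mapsto 0$ used to form $\widehat{H}^*$ is invisible to $\rho$, which drops the root in any case. Reading $\rho$ at the top level then places the new node $1$ at the root, with right branch equal to the left spine $u_1,\ldots,u_s$ together with the attached $\rho(\widehat{T}_{u_i})$, and left branch equal to $\rho(\widehat{H})$. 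This is precisely the tree produced by $\theta$ above, and since both roots carry the label $1$ while all deeper pieces agree by induction, the two labeled binary trees coincide, completing the step.

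I expect the main obstacle to lie in the middle step: verifying rigorously that $\theta$ ``localizes'' correctly, i.e.\ that switching at the heads of $\rho(T)$ restricts to $\theta$ on each hanging subtree while leaving the left spine untouched, and that the $1\mapsto 0$ relabeling in Deutsch's construction is exactly absorbed by $\rho$ dropping the root. Keeping the orientation conventions of $\rho$ (rightmost child recorded on the right branch, adjacent elder sibling on the left branch) consistent throughout, so that the spine and the hanging subtrees are identified correctly on both sides, is where the argument requires the most care.
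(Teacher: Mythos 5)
Your proposal is correct and is exactly the argument the paper has in mind: the paper's proof of this theorem is a one-line remark that it follows ``by induction on the number of nodes of $T$,'' and your write-up supplies precisely that induction, correctly matching the top-level decomposition of $\rho(T)$ (root $h$, left spine $u_1,\ldots,u_s$, right branches $\rho(H)$ and $\rho(T_{u_i})$) against Deutsch's recursion, with the key localization point---that $H(\rho(T))$ restricted to each hanging subtree is the head set of that subtree, while the spine nodes are left children and hence not heads---verified as needed. Note only that the head set in $\theta(B)=\prod_{x\in H(B)}\phi_x(B)$ is computed once in $B$ (so it is irrelevant that $u_1$ becomes a right child after $\phi_h$), which your argument implicitly and correctly respects.
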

 \begin{proof}
 This can be proved by induction on the number of nodes of $T$, which is straightforward and will be left to the interested reader. 
 \end{proof}
 
 For a tree $B\in\B_M$ and a node $v$ of $B$, if there are $i-1$ right edges lying on the path from the root to $v$, then $v$ is said to be at {\em right-level} $i$. Let $\Orl(B)$ be the set of nodes at odd right-levels in $B$. For example, $\Orl(B)=\{1,2,4,8,9\}$ if $B$ is the third tree in Fig.~\ref{theta}. 
 
 \begin{lemma}\label{deu:lem}
 For any $B\in\B_M$, $H(B)=\Orl(\theta(B))$. 
 \end{lemma}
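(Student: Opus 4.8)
The plan is to track how the left/right type of each edge changes under $\theta$ and then count the right edges along root-to-node paths by a parity argument, using the single structural fact that the root is always a head.

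First I would record that each $\phi_x$ leaves the underlying (unordered) parent–child structure of $B$ intact, merely interchanging the left and right subtrees of $x$. Consequently $\theta(B)$ has the same shape as $B$, and the path from the root to any node $v$ passes through the same sequence of nodes $v_0 = \text{root}, v_1, \dots, v_k = v$ in both trees; only the designations of the edges as left or right can differ. In particular the right-level of $v$ in $\theta(B)$ is one plus the number of right edges on this path in $\theta(B)$, so $v \in \Orl(\theta(B))$ precisely when the number of right edges on the path to $v$ in $\theta(B)$ is \emph{even}.

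Next I would establish the edge-type rule. Write $h_j = 1$ if $v_j \in H(B)$ and $h_j = 0$ otherwise. Note $h_0 = 1$, since the root is always a head, while for $j \geq 1$ we have $h_j = 1$ iff $v_j$ is a right child in $B$, i.e.\ iff the edge $(v_{j-1}, v_j)$ is a right edge of $B$. Because $\phi_p$ is applied in $\theta$ exactly when $p$ is a head, the type of the edge $(v_{i-1}, v_i)$ is swapped by $\theta$ iff $v_{i-1}$ is a head of $B$. A four-case check (on the values of $h_{i-1}$ and $h_i$) then yields that $(v_{i-1}, v_i)$ is a right edge of $\theta(B)$ iff $h_{i-1} \neq h_i$.

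It follows that the number of right edges on the path to $v$ in $\theta(B)$ equals the number of indices $i$ with $h_{i-1} \neq h_i$, that is, the number of head/non-head transitions along the sequence $h_0, \dots, h_k$. Since $h_0 = 1$, this count is even iff $h_k = h_0 = 1$, i.e.\ iff $v = v_k$ is a head of $B$. Combining with the reduction in the first step gives $v \in \Orl(\theta(B))$ iff $v \in H(B)$, which is the claim. The computations here are entirely routine; the only steps requiring care are the edge-type rule and the bookkeeping connecting ``odd right-level'' to ``even number of right edges'' and thence to the parity of head-status transitions. The conceptual crux — and the reason the identity holds — is simply that the root is a head, so the parity of the transitions along a path records exactly whether the endpoint is a head.
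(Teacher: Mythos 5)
Your proof is correct. Every step checks out: $\theta$ preserves the underlying unordered tree and the node identities, so the node sequence $v_0,\dots,v_k$ along a root-to-$v$ path is the same in $B$ and $\theta(B)$; the edge $(v_{i-1},v_i)$ has its left/right type flipped by $\theta$ precisely when $v_{i-1}\in H(B)$ (only $\phi_{v_{i-1}}$ among the commuting factors can affect that edge); your XOR rule ``right edge in $\theta(B)$ iff $h_{i-1}\neq h_i$'' follows from the four-case check since $h_i=1$ iff the edge is right in $B$; and the bookkeeping that ``odd right-level'' means an even number of right edges matches the paper's convention (right-level $i$ means $i-1$ right edges). The parity conclusion is sound: the number of transitions in a binary sequence is even iff the endpoints agree, and $h_0=1$ because the root is a head.

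Your route differs from the paper's in packaging rather than in the key mechanism. The paper proves the biconditional $v\in H(B)\Leftrightarrow v\in\Orl(\theta(B))$ by induction on the depth of $v$: the base case is the root, and the inductive step is a case analysis on whether the parent $y$ of $v$ is a head, using exactly your edge-flip rule to deduce whether $v$ is a left or right child of $y$ in $\theta(B)$ and then transferring the induction hypothesis from $y$ to $v$. You unroll that induction into a closed-form statement: the number of right edges on the path to $v$ in $\theta(B)$ equals the number of head/non-head transitions in the sequence $h_0,\dots,h_k$, and the parity of that count is determined by $h_k$ alone since $h_0=1$. What your version buys is a quantitative identity (an exact count of right edges in $\theta(B)$ along any path, not just its parity) and freedom from induction; what the paper's version buys is brevity, since the inductive step is a two-case check with a symmetric converse. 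Both arguments stand or fall on the same two facts you correctly isolate: the edge-type rule and the root being a head.
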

\begin{proof}
We aim to  prove that for any node $v$, 
\begin{equation*}
(\star)\qquad v\text{ is in $H(B)$}\Leftrightarrow v\text{ is in $\Orl(\theta(B))$}
\end{equation*}
 by induction on the {\em depth} of  $v$, that is the number of edges on the path from the root to $v$. 
 
 Clearly, $(\star)$ is true when $v$ is the root, the only node with depth $0$. Suppose that $v\in H(B)$ is a non-root node and let  $y$ be its parent in $B$. If $y\in H(B)$ (resp.,~$y\notin H(B)$), then $v$ is the left (resp.,~right) child of $y$ in $\theta(B)$. As $y$ has depth strictly less than that of $v$,  then by induction $y\in\Orl(\theta(B))$ (resp.~$y\notin\Orl(\theta(B))$), which implies $v\in\Orl(\theta(B))$. 
 The fact that $v\notin H(B)$ implies $v\notin\Orl(\theta(B))$ follows similarly. This proves $(\star)$ for all nodes $v$ by induction. 
\end{proof}
By Lemma~\ref{deu:lem}, the inverse of $\theta$ can be defined as 
$$
\theta^{-1}(B)=\prod_{x\in\Orl(B)}\phi_x(B)
$$
for $B\in\B_M$. Thus, the inverse of $\Theta$ is $\Theta^{-1}=\rho^{-1}\circ\theta^{-1}\circ\rho$. This direct construction of Deutsch's bijection $\Theta=\widehat{(\,)}$  enables us to observe its further features. 

Let $T\in\T_M$ and let $v$ be a node of $T$. We associate $v$ with a node $\tilde v$ as follows:
\begin{itemize}
\item[(i)] If $v$ is an internal node, then $\tilde v$ is the youngest child of $v$;
\item[(ii)] If $v$ is a leaf that is not youngest, then set $\tilde v=v$;
\item[(iii)] If $v$ is a youngest leaf, then $\tilde v$ is the first node that is not youngest in the path from $v$ to the root.  
\end{itemize}
For the first tree in Fig.~\ref{theta}, we have 
$$
\tilde 0=1, \tilde1=2, \tilde2=0, \tilde3=4, \tilde4=3, \tilde5=5, \tilde6=6, \tilde7=8, \tilde8=9, \tilde9=7, \widetilde{10}=10. 
$$
It is plain to see that $v\mapsto\tilde v$ is a bijection on all nodes of $T$.  

\begin{theorem}\label{thm:deu}
For $T\in\T_M$ and $v$ a node of $T$, then the following assertions hold.
\begin{itemize}
\item[(i)] If $v$ is an internal node in $T$ with degree $q$, then $\tilde v$ is an odd-level node with degree $q-1$ in $\widehat{T}$.
\item[(ii)]  If $v$ is a leaf that is not youngest in $T$, then $\tilde v$ is a leaf in even level in  $\widehat{T}$. 
\item[(iii)] If $v$ is a youngest leaf  and there are $k$ edges ($k>0$)  in the path from $v$ to $\tilde v$ in $T$, then $\tilde v$ is an internal node with degree $k$ in even-level in $\widehat{T}$. In particular, $\ystleaf(T)=\elint(\widehat T)$. 
\end{itemize}
Moreover, $\oleaf(T)=\oleaf(\widehat T)$. 
\end{theorem}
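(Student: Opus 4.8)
The plan is to transport everything to binary trees through $\rho$ and to exploit an explicit combinatorial description of $\theta$. I would first record three elementary consequences of the definition of $\rho$, where in $B=\rho(T)$ the right child of a node is its rightmost child in $T$ and the left child is its closest elder sibling. Namely: \textbf{(F1)} for every non-root node $w$, the level of $w$ in $T$ equals the right-level of $w$ in $B$ (each right edge descends one level in $T$, each left edge keeps the level); and \textbf{(F2)} a node is youngest in $T$ if and only if it is a head of $B$, while the degree of a node $u$ in $T$ equals the number of nodes on the maximal left-path of $B$ issuing from the right child of $u$ (with the convention that this number is $0$ when $u$ has no right child). Applying (F1) and (F2) to $\widehat{T}=\rho^{-1}(\theta(B))$ reduces every quantity in the theorem to a computation inside $B'=\theta(B)$.

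The key structural input is a clean description of $\theta$, which I would isolate as a lemma. Since the $\phi_x$ commute and $\phi_x$ only interchanges the two edges leaving $x$, the product $\theta(B)=\prod_{x\in H(B)}\phi_x(B)$ preserves the underlying parent function of $B$ and simply swaps the left/right label of each edge $(y,w)$ precisely when $y\in H(B)$; thus in $B'$ a node keeps its side if its $B$-parent is not a head and switches side if its $B$-parent is a head. Combined with Lemma~\ref{deu:lem}, which gives $H(B)=\Orl(\theta(B))$, and with (F1), this shows that the parity of the level of a node in $\widehat{T}$ is odd exactly for heads of $B$ and even for non-heads.

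With these tools the three cases are short. For an internal $v$ of degree $q$, the associated $\tilde v$ is the right child $c_q$ of $v$ in $B$, hence a head, so it sits at an odd level of $\widehat{T}$; because $c_q$ is a head its edges get flipped, the right child of $c_q$ in $B'$ is its old left child $c_{q-1}$, and the non-heads $c_{q-1},\dots,c_1$ then constitute the maximal left-path out of it, giving degree $q-1$ by (F2). For a non-youngest leaf $v$ one has $\tilde v=v$, a non-head whose edges are not flipped, so $v$ has no right child in $B'$ (degree $0$) and lies at an even level. For a youngest leaf $v$, the path $v=u_0,u_1,\dots,u_k=\tilde v$ in $T$ corresponds to a chain of right edges $u_k\to\cdots\to u_0$ in $B$ with $u_0,\dots,u_{k-1}$ heads and $u_k$ a non-head; flipping below each head $u_i$ makes $u_{k-1},u_{k-2},\dots,u_0$ the maximal left-path out of the right child of $u_k$ in $B'$, so $\tilde v$ has degree $k$ at an even level. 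The ``moreover'' is immediate: $\theta$ preserves the parent function, hence preserves leaves, and by Lemma~\ref{lem:phi} the number of leaves of $B$ equals $\oleaf(T)$, whence $\oleaf(T)=\oleaf(\widehat{T})$. Finally $\ystleaf(T)=\elint(\widehat{T})$ follows from (iii) and the fact that $v\mapsto\tilde v$ is a bijection matching the partition of the nodes of $T$ into internal nodes, non-youngest leaves and youngest leaves with the partition of the nodes of $\widehat{T}$ into odd-level nodes, even-level leaves and even-level internal nodes.

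The main obstacle I anticipate is bookkeeping rather than conceptual: one must verify the flip rule for $\theta$ rigorously, checking that the commuting swaps at the heads act on disjoint edge sets and therefore compose to a single well-defined relabelling, and one must treat with care the deleted root $0$, which appears as $\tilde v$ in case (iii) precisely when the entire path up to the root consists of youngest nodes. This boundary case is handled by regarding $0$ as a phantom node whose unique right child is the root of the binary tree, so that the left-path count in (F2) still returns the correct degree $k$ and the level $0$ is even as required.
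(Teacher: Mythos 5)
Your proposal is correct and follows essentially the same route as the paper's own proof: transport through $\rho$, a case analysis of left/right chains in the binary tree (your flip rule for $\theta$ and facts (F1)--(F2) are exactly the ``left chain/right chain/hoe/claw'' analysis in the paper), with Lemma~\ref{deu:lem} supplying the level parity and Lemma~\ref{lem:phi} the ``moreover'' part. Your treatment is somewhat more explicit than the paper's --- notably the careful statement of the edge-relabelling rule for $\theta$ and the phantom-root convention for the boundary case $\tilde v=0$, which the paper leaves implicit --- but the underlying argument is the same.
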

\begin{proof}
(i) Since $\tilde v$ is the youngest child, it becomes a head in $\rho(T)$, which is the leader of its {\em left chain} (i.e., the maximal path starting with $\tilde v$ compositing only left edges) whose members are its $q-1$ siblings. By Lemma~\ref{deu:lem}, $\tilde v$ turns to be a node at odd right-level under $\theta$. Thus, it becomes an odd-level node with degree $q-1$ in $\widehat T$. 

(ii) In this case $v=\tilde v$, which is a leaf that is not youngest in $T$. Thus, under $\rho$ it becomes a non-head without right child. By Lemma~\ref{deu:lem}, $\tilde v$ turns to be a node without right child at even right-level under $\theta$. Eventually, it turns to be a leaf at even-level in $\widehat{T}$ under $\rho^{-1}$. 

(iii) Since $\tilde v$ is the first node that is not youngest in the path from $v$ to the root in $T$, if letting $P$ be the path from $\tilde v$ to $v$, then all the nodes in $P$ are youngest, except $\tilde v$. Then under $\rho$, $P$ becomes the {\em right chain} (i.e., the maximal path starting with $\tilde v$ compositing only right edges) with $\tilde v$ as the leader. This right chain turns to a maximal {\em hoe} with $\tilde v$ as the leader, where a hoe is a path
begins with a right edge  and continues with only left edges. Eventually, this hoe becomes a {\em claw} under $\rho^{-1}$, i.e., the node $\tilde v$ with all the remaining $k$ nodes in the hoe as children in $\widehat T$. Thus, $\tilde v$ is of degree $k$ and as $\tilde v$ is not youngest in $T$, $\tilde v$ lies at even level in $\widehat T$ according to Lemma~\ref{deu:lem}. 

Finally, by Lemma~\ref{lem:phi}, old leaves in $T$ become leaves in $\rho(T)$, which are preserved under $\theta$. Thus, $\oleaf(T)=\oleaf(\widehat T)$, as desired. 
\end{proof}

One interesting consequence of the plane tree case of Theorem~\ref{thm:deu}  is the following  symmetry. 
\begin{corollary}
For $n\geq1$, the pair $(\oleaf,\elint)$  is symmetric on $\P_n$.  In particular, the two statistics `$\oleaf$' and `$\elint$' are equidistributed on $\P_n$ and so 
$$
|\{T\in\P_n: \elint(T)=k\}|=2^{n-2k+1}{n-1\choose 2k-2}C_{k-1}. 
$$
\end{corollary}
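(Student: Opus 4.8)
The plan is to deduce the symmetry by pulling $\elint$ back through Deutsch's bijection to the statistic $\ystleaf$, which is visibly symmetric with $\oleaf$ via reflection, and then to read off the explicit count from the Motzkin specialization already recorded in Corollary~\ref{thm:motz}. Throughout I work in the plane tree case $M=\{1^n\}$ of the results in the excerpt.

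First I would transfer $\elint$ back through $\widehat{(\,)}$. By the plane tree case of Theorem~\ref{thm:deu}(iii) we have $\ystleaf(T)=\elint(\widehat T)$, and the final assertion of that theorem gives $\oleaf(T)=\oleaf(\widehat T)$. Since $\widehat{(\,)}\colon\P_n\to\P_n$ is a bijection, summing $x^{\oleaf(T)}y^{\ystleaf(T)}$ over $T\in\P_n$ and reindexing by $T'=\widehat T$ shows that the joint distribution of $(\oleaf,\elint)$ coincides with that of $(\oleaf,\ystleaf)$ on $\P_n$. So it suffices to prove that $(\oleaf,\ystleaf)$ is symmetric.

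Next I would exhibit that symmetry directly by reflection. Let $R\colon\P_n\to\P_n$ reverse the left-to-right order of the children at every node; this is an involution on $\P_n$. An old leaf is a leaf that is the leftmost child of its parent, and a youngest leaf is a leaf that is the rightmost child of its parent, so $R$ sends leftmost-child leaves to rightmost-child leaves and conversely, giving $\oleaf(R(T))=\ystleaf(T)$ and $\ystleaf(R(T))=\oleaf(T)$ (a leaf that is an only child remains an only child and is counted by both statistics, consistently). Hence $\sum_{T\in\P_n}x^{\oleaf(T)}y^{\ystleaf(T)}$ is symmetric in $x$ and $y$, and combining with the previous paragraph, $(\oleaf,\elint)$ is symmetric on $\P_n$; in particular $\oleaf$ and $\elint$ are equidistributed. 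For the closed form I would specialize Corollary~\ref{thm:motz}: since $\sleaf=\sint$ and $\eleaf=\eint$ by \eqref{obs eq1}, we have $\oleaf=\oint$, so setting $x_2=y_1=y_2=1$ and $x_1=x$ in \eqref{rel:motz} collapses $(x_1y_1)^{\oleaf}$ to $x^{\oleaf}$ and yields $\sum_{T\in\P_n}x^{\oleaf(T)}=M_{n-1}(x,2)=\sum_{k\ge0}\binom{n-1}{2k}C_k\,2^{n-1-2k}x^{k+1}$. Extracting the coefficient of $x^k$ (so that the Motzkin index is $k-1$) gives $2^{n-2k+1}\binom{n-1}{2k-2}C_{k-1}$, which by the equidistribution just established is also $|\{T\in\P_n:\elint(T)=k\}|$.

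None of the steps poses a genuine obstacle; the conceptual crux is simply recognizing that $\ystleaf$ is the right intermediate statistic, after which its symmetry with $\oleaf$ is immediate from reflection. The only points requiring care are verifying that $R$ really interchanges the two leaf statistics and aligning the summation indices in the final coefficient extraction, and the degenerate case $n=1$ can be checked by hand.
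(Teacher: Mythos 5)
Your proposal is correct and follows essentially the same route as the paper: you transfer $\elint$ to $\ystleaf$ via Theorem~\ref{thm:deu}, obtain the symmetry of $(\oleaf,\ystleaf)$ from the reflection (minor symmetry) of plane trees, and extract the closed form from the specialization $x_2=y_1=y_2=1$ of~\eqref{rel:motz}, which is exactly the paper's alternative derivation of the Chen--Deutsch--Elizalde formula~\eqref{eq:cde}. Your extra checks (singleton leaves under reflection, index alignment in the coefficient extraction) are sound and the invocation of $\oleaf=\oint$ is harmless though unnecessary once $y_1=1$.
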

\begin{proof}
By Theorem~\ref{thm:deu}, the pair $(\oleaf,\elint)$ has the same joint distribution as the pair $(\oleaf,\ystleaf)$ on $\P_n$. By the minor symmetry of plane trees, we see that $(\oleaf,\ystleaf)$ is symmetric  on $\P_n$, from which the symmetry $(\oleaf,\elint)$  follows.

The second statement then follows from the fact 
\begin{equation}\label{eq:cde}
|\{T\in\P_n: \oleaf(T)=k\}|=2^{n-2k+1}{n-1\choose 2k-2}C_{k-1},
\end{equation}
which was proved in~\cite[Proposition~2]{Chen1}. Alternatively, formula~\eqref{eq:cde} can also be deduced from~\eqref{rel:motz} by setting $x_2=y_1=y_2=1$.
\end{proof}

\begin{remark}
By the minor symmetry of plane trees, the two triples $(\oleaf,\ystleaf,\elint)$ and $(\ystleaf,\oleaf,\elint)$ are equidistributed on $\P_n$. In particular, $(\ystleaf,\elint)$ is symmetric  on $\P_n$. By Theorem~\ref{thm:deu}, the two statistics ``$\ystleaf$'' and ``$\elint$'' are equidistributed on $\T_M$. However, $(\ystleaf,\elint)$ is not symmetric on $\T_n$, the set of all increasing trees on $[n]$. 

Note also that ``$\oleaf$'' and ``$\elint$'' are not equidistributed on $\T_n$. 
\end{remark}

Another consequence of  Theorem~\ref{thm:deu} for plane trees and the minor symmetry of plane trees is the following new interpretation for the refined Narayana polynomials. 

\begin{corollary}\label{cor:elintleaf}
 For $n\geq1$, we have the equidistribution
$$
\sum_{T\in\P_n} x^{\oleaf(T)}y^{\yleaf(T)}=\sum_{T\in\P_n} x^{\elint(T)}y^{\elleaf(T)}. 
$$
\end{corollary}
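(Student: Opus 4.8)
The plan is to deduce the identity from Deutsch's bijection $\widehat{(\,)}$ together with the minor symmetry of plane trees, in the same spirit as the proof of the preceding corollary. First I would read off from Theorem~\ref{thm:deu} not merely the highlighted equality $\ystleaf(T)=\elint(\widehat T)$, but the \emph{complete} four-way classification of the nodes of $\widehat T$: parts (i)--(iii), together with the fact that $v\mapsto\tilde v$ is a bijection on nodes, partition the node set of $\widehat T$ into odd-level nodes (images under $\tilde{(\,)}$ of the internal nodes of $T$, which have degree $q-1$, hence are odd-level leaves when $q=1$ and odd-level internal nodes when $q\geq 2$), even-level internal nodes (images of the youngest leaves of $T$), and even-level leaves (images of the leaves of $T$ that are not youngest). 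In particular the even-level leaves of $\widehat T$ are in bijection with the non-youngest leaves of $T$, whence $\elleaf(\widehat T)=\leaf(T)-\ystleaf(T)$, while $\elint(\widehat T)=\ystleaf(T)$.

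Since $\widehat{(\,)}$ is a bijection on $\P_n$, reindexing the sum $U=\widehat T$ gives
$$\sum_{T\in\P_n}x^{\elint(T)}y^{\elleaf(T)}=\sum_{T\in\P_n}x^{\ystleaf(T)}y^{\leaf(T)-\ystleaf(T)}.$$
It then remains to match the right-hand side with $\sum_{T\in\P_n}x^{\oleaf(T)}y^{\yleaf(T)}$. Here I would invoke the minor symmetry of plane trees, i.e.\ the involution $R$ reversing the left-to-right order of the children at every node. This $R$ preserves the level and the leaf/internal status of each node and interchanges leftmost children with rightmost children; consequently $\oleaf(T)=\ystleaf(R(T))$ and $\yleaf(T)=\bigl(\leaf-\ystleaf\bigr)(R(T))$. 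Reindexing by $S=R(T)$ shows that $(\oleaf,\yleaf)$ and $(\ystleaf,\leaf-\ystleaf)$ are equidistributed on $\P_n$, and chaining the two equalities yields the corollary.

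The bookkeeping here is light; the one point that needs genuine care—and the main obstacle—is that Theorem~\ref{thm:deu} as stated only isolates $\ystleaf(T)=\elint(\widehat T)$, so I must confirm that no node type other than the non-youngest leaves of $T$ can produce an even-level leaf in $\widehat T$. This is exactly what the full case analysis of parts (i)--(iii) guarantees: a degree-one internal node of $T$ is sent to an \emph{odd}-level leaf, and a youngest leaf is sent to an even-level \emph{internal} node, so even-level leaves arise solely from non-youngest leaves. Once this is verified, the identity $\elleaf(\widehat T)=\leaf(T)-\ystleaf(T)$ follows and the argument closes.
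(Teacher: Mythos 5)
Your proposal is correct and follows exactly the route the paper intends: the paper derives Corollary~\ref{cor:elintleaf} as a consequence of Theorem~\ref{thm:deu} (whose parts (i)--(iii), with the bijectivity of $v\mapsto\tilde v$, give precisely $\elint(\widehat T)=\ystleaf(T)$ and $\elleaf(\widehat T)=\leaf(T)-\ystleaf(T)$) combined with the minor symmetry of plane trees, which exchanges $(\oleaf,\yleaf)$ with $(\ystleaf,\leaf-\ystleaf)$. Your added verification that even-level leaves of $\widehat T$ can only arise from non-youngest leaves of $T$ is the right point to check, and your case analysis settles it.
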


%%%%%%%%%%%%%%%%%%%%%%%%%%%%%%%%%
\section{Applications to permutation patterns and statistics}%%%%
%%%%%%%%%%%%%%%%%%%%%%%%%%%%%%%%%
\label{sec:3}

In this section, we use a classical bijection  between  increasing binary trees $\I_n:=\B_{[n]}$ and permutations $\S_n$ of $[n]$ to transform statistics from trees to permutations.

Given a word $w=w_1w_2\cdots w_n$ of positive integers with no repeated letters, we can write it as $w=\sigma i\tau$ with $i$ the least element of $w$.  Set $\lambda(\emptyset)=\emptyset$ and define $\lambda(w)$ recursively as $(\lambda(\sigma), i,\lambda(\tau))$, the binary tree  rooted at $i$ with left branch $\lambda(\sigma)$ and right branch $\lambda(\tau)$. See Fig.~\ref{bij:lambda} for an example of $\lambda$. The map $\lambda: \S_n \rightarrow \I_n$ is a bijection (see~\cite[Page~23]{St0}). The inverse map $\lambda^{-1}$ can be viewed as projecting the nodes of the increasing binary trees downwards.

\begin{figure}
\centering
\begin{tikzpicture}[scale=0.27]

\node at  (0,2){$\longrightarrow$};\node at  (0,3){$\lambda$};
\node at  (-10,2){$2~ 5~ 6~ 4~ 3~ 8~ 7~ 1~ 9~ 10$};
\draw[-](11,9) to (8,6);
\draw[-](11,9) to (17,3);\draw[-](8,6) to (14,0);
\draw[-](11,-3) to (14,0);\draw[-](11,3) to (5,-3);\draw[-](8,-6) to (5,-3);
\node at  (11,9){$\bullet$};\node at  (11.6,9.4){$1$};
\node at  (8,6){$\bullet$};\node at  (7.3,6.3){$2$};
\node at  (14,6){$\bullet$};\node at  (14.7,6.5){$9$};
\node at  (17,3){$\bullet$};\node at  (17.9,3.6){$10$};
\node at  (11,3){$\bullet$};\node at  (11.7,3.5){$3$};
\node at  (14,0){$\bullet$};\node at  (14.6,0.5){$7$};
\node at  (11,-3){$\bullet$};\node at  (11,-2){$8$};
\node at  (8,0){$\bullet$};\node at  (7.3,0.3){$4$};
\node at  (5,-3){$\bullet$};\node at  (4.3,-2.7){$5$};
\node at  (8,-6){$\bullet$};\node at  (7.2,-6){$6$};
\end{tikzpicture}
\caption{An example of the bijection $\lambda$.\label{bij:lambda}}
\end{figure}
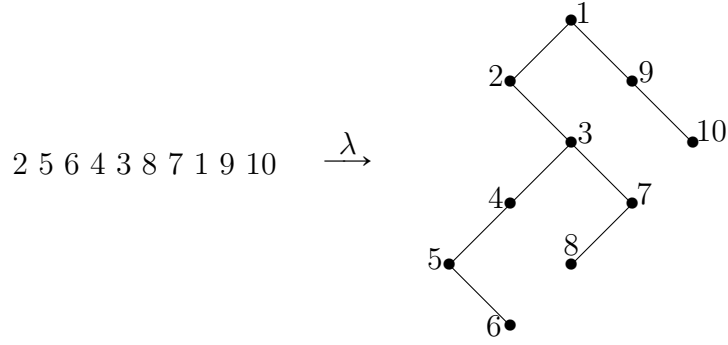

\subsection{Refinements of the peak statistic on permutations}

Given a permutation $\pi=\pi_1\pi_2\dots\pi_n\in\S_n$, a letter $\pi_i$  ($1\leq i\leq n$) is called a {\em peak} (resp.,~{\em double descent, double ascent})  of $\pi$ if  $\pi_{i-1}<\pi_{i}>\pi_{i+1}$ (resp.,~$\pi_{i-1}>\pi_{i}>\pi_{i+1}$, $\pi_{i-1}<\pi_{i}<\pi_{i+1}$), where by convention $\pi_0=\pi_{n+1}=0$. 
We further refer to a peak $\pi_i$ as of {\em type 1} if $\pi_{i-1}\leq\pi_{i+1}$ and of {\em type 2} otherwise. 
Let $\pk(\pi)$ (resp.,~$\pk_1(\pi)$, $\pk_2(\pi)$, $\dd(\pi)$, $\da(\pi)$) be the number of peaks (resp., type 1 peaks, type 2 peaks, double descents, double ascents) of $\pi$.

\begin{lemma}\label{lem:2types}
Let $\pi\in\S_n$ and $B=\lambda(\pi)$. For $n\geq2$ and $i\in[n]$, we have
\begin{itemize}
\item[(i)] $\pi_i$ is a peak of type $1$ in $\pi$ iff $\pi_i$ is a left leaf in $B$;
\item[(ii)] $\pi_i$ is a peak of type $2$ in $\pi$ iff $\pi_i$ is a right leaf in $B$;
\item[(iii)] $\pi_i$ is a double descent in $\pi$ iff $\pi_i$ has only left child in $B$;
\item[(iv)] $\pi_i$ is a double ascent in $\pi$ iff $\pi_i$ has only right child in $B$. 
\end{itemize}
\end{lemma}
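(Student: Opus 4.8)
The plan is to reduce all four equivalences to the structure of the increasing binary tree $B=\lambda(\pi)$, exploiting that $\lambda$ is built recursively by placing the least letter $i$ of $w=\sigma i\tau$ at the root with $\lambda(\sigma),\lambda(\tau)$ as its branches, so that each node carries the least label of its subtree (the subtree-minimum property) and the in-order reading of $B$ returns $\pi$ (this is exactly the statement that $\lambda^{-1}$ projects the nodes downward).

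First I would establish the \textbf{child-existence criterion}: for every $i\in[n]$, with the convention $\pi_0=\pi_{n+1}=0$, the node $\pi_i$ has a left child in $B$ iff $\pi_{i-1}>\pi_i$, and a right child iff $\pi_{i+1}>\pi_i$. This falls straight out of the recursion: if $\pi_a\cdots\pi_b$ is the maximal factor of $\pi$ on which $\pi_i$ is the minimum, then the left (resp.\ right) branch at $\pi_i$ is produced from $\pi_a\cdots\pi_{i-1}$ (resp.\ $\pi_{i+1}\cdots\pi_b$), which is nonempty precisely when the immediate neighbour on that side exceeds $\pi_i$; alternatively one splits $\pi$ at its least letter and inducts on $n$. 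Granting this, parts (iii) and (iv) are immediate — a double descent $\pi_{i-1}>\pi_i>\pi_{i+1}$ yields a left child but no right child, and a double ascent yields the reverse — and a node is a leaf exactly when it is a peak, so the leaves of $B$ are precisely the peaks of $\pi$.

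It remains to split the peaks/leaves into left and right leaves, i.e.\ to decide for a peak $\pi_i$ whether its parent lies to its right (so $\pi_i$ is a left child) or to its left (so $\pi_i$ is a right child) in the in-order word $\pi$. The key step is to \emph{identify the parent of a leaf}: since both subtrees of a leaf are empty, its in-order predecessor (resp.\ successor) is its parent exactly when it is a right (resp.\ left) child, so the parent of $\pi_i$ is one of the neighbours $\pi_{i-1},\pi_{i+1}$, and I claim it is the larger of the two. Indeed, if $\pi_{i-1}>\pi_{i+1}$ but $\pi_i$ were the left child of $\pi_{i+1}$, then the in-order predecessor $\pi_{i-1}$ of the leaf $\pi_i$ would be a proper ancestor of $\pi_{i+1}$, hence carry a smaller label than $\pi_{i+1}$ by the subtree-minimum property, contradicting $\pi_{i-1}>\pi_{i+1}$; thus $\pi_i$ is the right child of $\pi_{i-1}$. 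This gives (ii): a type-$2$ peak ($\pi_{i-1}>\pi_{i+1}$) is a right leaf. The symmetric argument gives (i): a type-$1$ peak (where $\pi_{i-1}<\pi_{i+1}$, strictly, since the letters are distinct) is a left leaf.

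I expect the parent-side determination of the previous paragraph to be the only genuine obstacle, since (iii), (iv) and the leaf characterisation are formal consequences of the child-existence criterion. If one prefers a fully inductive proof, one can carry the four equivalences along the recursion $w=\sigma i\tau$, but then some care is needed with the boundary conventions — the least letter $i$ plays the role of the virtual value $0$ at the inner ends of $\sigma$ and $\tau$ — and with single-letter factors, where the root of $\lambda(\sigma)$ or $\lambda(\tau)$ becomes a leaf whose left/right status is created by its attachment to $i$ rather than inherited from the subword. The nearest-smaller-neighbour viewpoint above sidesteps these case distinctions.
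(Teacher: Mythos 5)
Your proposal is correct and takes essentially the same route as the paper: the paper's proof cites Stanley for (iii), (iv) and the fact that peaks correspond to leaves, and then deduces (i) and (ii) from the single observation that a left (resp.\ right) leaf forces $\pi_{i-1}<\pi_{i+1}$ (resp.\ $\pi_{i-1}>\pi_{i+1}$), upgraded to equivalences because left/right leaves partition the leaves just as the two types partition the peaks. Your child-existence criterion and your in-order parent-identification argument (the parent of a leaf is its larger neighbour, proved via the subtree-minimum property) are precisely self-contained proofs of the facts the paper cites or asserts, so the content matches with nothing missing.
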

\begin{proof}
Properties (iii) and (iv) are known in~\cite[Page~24]{St0}, as well as the fact that $\pi_i$ is a peak in $\pi$   iff $\pi_i$ is a leaf in $B$. Assertions (i) and (ii) then follow from the observation that $\pi_i$ is a left (resp.,~right) leaf in $B$ implies $\pi_{i-1}<\pi_{i+1}$ (resp.,~$\pi_{i-1}>\pi_{i+1}$).
\end{proof}

It is well known (see~\cite[Page~22]{St0}) that the {\em Eulerian polynomial} $A_n(x)$ enumerates permutations in $\S_n$ by number of descents. 
Introduce the refinement of Eulerian polynomials for $n\geq1$ by  
$$A_n(x,y,z_1,z_2)=\sum_{\pi\in\S_n}x^{\dd(\pi)}y^{\da(\pi)}z_1^{\pk_1(\pi)}z_2^{\pk_2(\pi)}.$$
The first three values of $A_n(x,y,z_1,z_2)$ are
    \begin{align*}
        A_1(x,y,z_1,z_2)&=z_1,\\
        A_2(x,y,z_1,z_2)&=xz_1+yz_2,\\
        A_3(x,y,z_1,z_2)&=x(x+y)z_1+y(x+y)z_2+2z_1z_2.
    \end{align*}
    Comparing Lemma~\ref{lem:2types} with Lemma~\ref{lem:phi}, we have  
    $$
    A_n(x,y,z_1,z_2)=\sum_{T\in\T_n}x^{\yleaf(T)}y^{\yint(T)}z_1^{\eleaf(T)}z_2^{\sleaf(T)}
    $$
    for $n\geq2$. 
    We are interested in computing the generating function 
    $$
    A(x,y,z_1,z_2;t)=\sum_{n\geq1}A_n(x,y,z_1,z_2)\frac{t^n}{n!}
    $$
    for two reasons. First, Carlitz and Scoville~\cite{CS74} proved the nice generating function formula 
    \begin{equation}\label{eq:CS74}
    A(x,y,z,z;t)=\frac{uv(e^{ut}-e^{vt})}{ue^{vt}-ve^{ut}},
    \end{equation}
    where $u+v=x+y$ and $uv=z$. Second, Dong et al.~\cite{Dong1} has obtained explicit algebraic generating function (see Theorem~\ref{thm:genc} for a refinement) for  plane trees by the quadruple of statistics $(\eleaf,\sleaf,\yleaf,\yint)$. Unfortunately, we are unable to find any explicit formula for $A(x,y,z_1,z_2;t)$. Instead, we can prove that  $A(x,y,z_1,z_2;t)$ satisfies a Riccati equation. 
   A  grammatical calculus approach to Carlitz-Scoville's formula~\eqref{eq:CS74} was given by Fu~\cite{Fu} and it would be interesting to see whether her approach could be adopted to the refinement $A(x,y,z_1,z_2;t)$.

\begin{theorem}
    The generating function $A=A(x,y,z_1,z_2;t)$ satisfies the Riccati equation:
    \begin{equation}\label{genpeak}
        \frac{\partial A}{\partial t}=A^2+((z_2-z_1)t+x+y)A+(z_2-z_1)yt+z_1.   
    \end{equation}    
\end{theorem}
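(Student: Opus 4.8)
The plan is to derive the Riccati equation by translating the statistics into the language of increasing binary trees and then using the standard recursive decomposition of such trees to obtain a differential equation for the exponential generating function. Since $A_n(x,y,z_1,z_2)$ enumerates permutations in $\S_n$ by the quadruple $(\dd,\da,\pk_1,\pk_2)$, and Lemma~\ref{lem:2types} identifies these statistics under the bijection $\lambda$ with (respectively) nodes having only a left child, nodes having only a right child, left leaves and right leaves of $B=\lambda(\pi)\in\I_n$, the whole problem reduces to a generating function identity for increasing binary trees weighted by these four node-types.

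First I would set up the recursive decomposition. An increasing binary tree on $[n]$ has root labelled $1$; removing the root splits the remaining labels into a left branch and a right branch, which (after order-preserving relabelling) are themselves increasing binary trees, with the labels distributed among the two branches in all $\binom{n-1}{k}$ ways. This is exactly the decomposition that produces a convolution, and passing to the exponential generating function $A=\sum_{n\ge1}A_n t^n/n!$ turns the convolution into a product, suggesting a quadratic term $A^2$ as the leading feature of the equation. The subtlety is that the contribution of the root depends on whether it has zero, one, or two children: a childless root contributes the initial weight $z_1$ (a single node is a left-leaf-type peak, matching $A_1=z_1$); a root with only one child must be tracked as contributing to $\dd$ or $\da$; and a root with two children contributes nothing itself but forces the $A^2$ convolution. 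Differentiating with respect to $t$ (which strips off the smallest label, i.e.\ the root) is the clean way to expose these three cases.

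The step I expect to be the main obstacle is correctly accounting for the type-distinction between $\pk_1$ and $\pk_2$ and the resulting $t$-dependent coefficients $(z_2-z_1)t$ and $(z_2-z_1)yt$. The asymmetry between left leaves ($z_1$) and right leaves ($z_2$) is what breaks the clean Carlitz--Scoville symmetry and forces explicit powers of $t$ into~\eqref{genpeak}. Concretely, when the root acquires a right child but no left child (a double ascent, weight $y$), the structure beneath interacts with the leaf-type statistics in a way that differs from the left-child case, and the linear-in-$t$ terms arise because a node that is currently a leaf of one type can be converted into an internal node as smaller labels are prepended, shifting weight from $z_1$ toward $z_2$ or vice versa. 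I would carefully track, in the decomposition, the event that the root's unique child is itself a leaf versus an internal node, since that is precisely where a peak changes type; this bookkeeping is what generates the inhomogeneous terms $(z_2-z_1)yt + z_1$.

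Once the decomposition is organised into the three root-degree cases with the leaf-type corrections, assembling the pieces should give the functional relation
\begin{equation*}
\frac{\partial A}{\partial t}=A^2+\bigl((z_2-z_1)t+x+y\bigr)A+(z_2-z_1)yt+z_1,
\end{equation*}
and one can sanity-check the result against the listed initial values $A_1,A_2,A_3$ by extracting the coefficients of $t^0,t^1,t^2$ in the Taylor expansion. I would present the proof by first recording the combinatorial decomposition as the source of each term and then verifying the low-order cases as a consistency check, leaving the routine coefficient comparison to the reader.
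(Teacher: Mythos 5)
Your route is, at bottom, the same decomposition as the paper's, transported along $\lambda$: the paper writes $\pi=\sigma 1\tau$ and classifies by the position $i$ of the letter $1$, which is exactly your root-removal on increasing binary trees. The difference is presentational. The paper records the decomposition as a five-case recurrence, $A_{n}=(x+y)A_{n-1}+(n-1)(z_1+z_2)A_{n-2}+\sum_{i=2}^{n-3}\binom{n-1}{i}A_{i}A_{n-1-i}$ for $n\geq4$ (the cases $i=2$ and $i=n-1$ being split off because a length-one block next to the $1$ behaves specially), and only afterwards converts this recurrence, together with the initial values $A_1,A_2,A_3$, into the Riccati equation. You go directly for the ODE via $\partial A/\partial t$ stripping the root, which is arguably cleaner and explains conceptually why the inhomogeneous terms appear; so your plan is viable, but two points in it need repair before it closes.

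First, your stated mechanism for the type change is inaccurate: nothing is ``converted into an internal node.'' The only discrepancy between the standalone weight of a branch and its weight inside the tree occurs when the \emph{right} branch is a single node: that node remains a leaf, but it flips from a type-$1$ peak (standalone weight $A_1=z_1$) to a type-$2$ peak (a right leaf, weight $z_2$, by Lemma~\ref{lem:2types}); left branches need no correction, since a singleton left branch is a left leaf either way. Hence the correct bookkeeping is simply to use $A+(z_2-z_1)t$ as the generating function of every right branch. Second --- and this is where your plan as written would lose a term --- this correction applies not only when the root's unique child is a right child, giving $y\bigl(A+(z_2-z_1)t\bigr)$ and hence the inhomogeneous $(z_2-z_1)yt$, but also inside the two-child convolution, which must be $A\bigl(A+(z_2-z_1)t\bigr)=A^2+(z_2-z_1)tA$; restricting the correction to the unique-child case, as your sketch suggests, yields an equation already wrong at the coefficient of $t^2$ (your proposed consistency check against $A_3$ would detect this). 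With these repairs, summing the root-degree cases $z_1+xA+y\bigl(A+(z_2-z_1)t\bigr)+A\bigl(A+(z_2-z_1)t\bigr)$ for $\partial A/\partial t$ gives precisely~\eqref{genpeak}; the singleton-right-branch correction is the tree-side counterpart of the paper's case $i=n-1$ (weight $z_2$ rather than $z_1$) and of its absorption of the small-$n$ boundary into $(z_2-z_1)yt+z_1$.
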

\begin{proof}
 For $1\leq i\leq n$, let $A_{n,i}=A_{n,i}(x,y,z_1,z_2):=\sum_{\pi\in\S_{n,i}}x^{\dd(\pi)}y^{\da(\pi)}z_1^{\pk_1(\pi)}z_2^{\pk_2(\pi)}$, where   $\S_{n,i}$ is the set of permutations $\pi\in\S_n$ with $\pi_i=1$. For convenience, we write $A_n=A_{n}(x,y,z_1,z_2)$. 
    For $n\geq4$ and $\pi\in\S_{n,i}$, we write $\pi=\sigma1\tau$ and consider the following five cases.
    \begin{enumerate}[(1)]
        \item If $i=1$, then we have
        $$(\dd,\pk_1,\pk_2)(\pi)=(\dd,\pk_1,\pk_2)(\tau)\text{ and } \da(\pi)=1+\da(\tau).$$
        Thus, $A_{n,1}=yA_{n-1}$.
        \item If $i=n$, then we have
        $$(\da,\pk_1,\pk_2)(\pi)=(\da,\pk_1,\pk_2)(\sigma)\text{ and } \dd(\pi)=1+\dd(\sigma).$$
        Thus,  $A_{n,n}=xA_{n-1}$.
        \item If $i=2$, then we have
       $$(\dd,\da,\pk_2)(\pi)=(\dd,\da,\pk_2)(\tau)\text{ and } \pk_1(\pi)=1+\pk_1(\tau).$$
        Thus, $A_{n,2}=(n-1)z_1A_{n-2}$.
         \item If $i=n-2$, then we have
       $$(\dd,\da,\pk_1)(\pi)=(\dd,\da,\pk_1)(\sigma)\text{ and } \pk_2(\pi)=1+\pk_2(\sigma).$$
        Thus, $A_{n,2}=(n-1)z_2A_{n-2}$.
        \item If  $3\leq i \leq n-2$, then we have
        $$(\dd,\da,\pk_1,\pk_2)(\pi)=(\dd,\da,\pk_1,\pk_2)(\sigma)+(\dd,\da,\pk_1,\pk_2)(\tau).$$
        Thus, $A_{n,i}=\binom{n-1}{i-1}A_{i-1}A_{n-i}$ for $3\leq i \leq n-2$.
    \end{enumerate}
        Summing over all cases gives 
        \begin{equation*}
                A_{n}=(x+y)A_{n-1}+(n-1)(z_1+z_2)A_{n-2}+\sum_{i=2}^{n-3}\binom{n-1}{i}A_{i}A_{n-1-i}
         \end{equation*}
         for $n\geq4$. 
       Involving the first three values of $A_n(x,y,z_1,z_2)$, we can turn this recurrence relation into the Riccati equation~\eqref{genpeak}.
\end{proof}

We could not solve the Riccati equation~\eqref{genpeak} to get explicit expression for $A(x,y,z_1,z_2;t)$.
However, the special case that $x=y=z_2=1$ can be solved.

\begin{proposition}
    Let $A(z;t)=\sum_{n\geq1}\sum_{\pi\in\S_n}z^{\pk_1(\pi)}\frac{t^n}{n!}$. Then, 
    \begin{equation}\label{genlpeak0}
        A(z;t)=\frac{e^{(z-1)t^2/2}}{1-\int_0^{t}e^{(z-1)x^2/2}dx}+(z-1)t-1.
    \end{equation}
\end{proposition}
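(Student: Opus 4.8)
The plan is to specialize the Riccati equation~\eqref{genpeak} and then solve the resulting scalar ODE explicitly. Setting $x=y=z_2=1$ and $z_1=z$ in~\eqref{genpeak}, and writing $A=A(z;t)$, one finds that $A(z;t)$ satisfies
$$
\frac{\partial A}{\partial t}=A^2+\bigl((1-z)t+2\bigr)A+(1-z)t+z,
$$
subject to the initial condition $A(z;0)=0$, since the sum defining $A(z;t)$ starts at $n\geq1$. The first step is simply to record this ODE with its initial value.

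The key step is to exhibit a particular solution. Guided by the shape of the claimed formula~\eqref{genlpeak0}, whose right-hand side contains the extra summand $(z-1)t-1$, I would verify directly that $A_p:=(z-1)t-1$ solves the Riccati equation (disregarding the initial condition). A one-line computation shows that both sides reduce to $z-1$, so $A_p$ is indeed a particular solution. This guess is the only genuinely creative step; once $A_p$ is available, the rest is the standard Riccati-to-linear reduction.

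Next I substitute $A=A_p+v$. The terms involving $A_p$ cancel because $A_p$ is a solution, leaving the Bernoulli equation
$$
v'=v^2+(z-1)t\,v,\qquad v(0)=A(z;0)-A_p(0)=1.
$$
Putting $w=1/v$ converts this into the first-order linear equation $w'+(z-1)t\,w=-1$ with $w(0)=1$. Solving with the integrating factor $e^{(z-1)t^2/2}$ gives
$$
w\,e^{(z-1)t^2/2}=1-\int_0^t e^{(z-1)x^2/2}\,dx,
$$
where the additive constant $1$ is fixed by $w(0)=1$. Inverting to recover $v=1/w$ and adding back $A_p=(z-1)t-1$ yields precisely the expression~\eqref{genlpeak0}.

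The only real obstacle is identifying the particular solution $A_p=(z-1)t-1$; everything afterward is a mechanical unwinding of the Bernoulli and linear steps, with the initial conditions propagating cleanly from $A(z;0)=0$ through $v(0)=1$ and $w(0)=1$.
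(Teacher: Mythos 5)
Your proposal is correct and follows essentially the same route as the paper: specialize the Riccati equation~\eqref{genpeak} at $x=y=z_2=1$, $z_1=z$, spot the particular solution $(z-1)t-1$, reduce to a Bernoulli equation via $A=A_p+v$, convert to a first-order linear ODE by $w=1/v$, and solve with the integrating factor $e^{(z-1)t^2/2}$. Your writing the solution as a definite integral $\int_0^t$ pinned down by the single condition $v(0)=1$ is a minor streamlining of the paper's version, which uses an indefinite integral with constants fixed by $A(z;0)=0$ and $A'(z;0)=z$, but the argument is the same.
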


\begin{proof}
    By setting $x=y=z_2=1$ and $z_1=z$ in~\ref{genpeak}, we get the Riccati equation for $A(z;t)$
    \begin{equation}\label{genlpeak}
        \frac{\partial A(z;t)}{\partial t}=A(z;t)^2+((1-z)t+2)A(z;t)+(1-z)t+z.
    \end{equation}
    We observe that $(z-1)t-1$ is a particular solution of~\eqref{genlpeak}.
    Setting $B(z;t)=A(z;t)-(z-1)t+1$,  we get the Bernoulli differential equation:
    \begin{equation*}
        \frac{\partial B(z;t)}{\partial t}+(1-z)tB(z;t)=B(z;t)^2.
    \end{equation*}
   Letting $C(z;t)=B(z;t)^{-1}$, we obtain the following linear first-order ordinary differential equation:
    $$
        \frac{\partial C(z;t)}{\partial t}+(z-1)tC(z;t)+1=0.
    $$
     By solving this differential equation, we can express $C(z;t)$ as:
    $$
    C(z;t)=c_1e^{(1-z)t^2/2}-c_2e^{(1-z)t^2/2}\int e^{(z-1)t^2/2}dt,
    $$
    where $c_1$ is an arbitrary constant and $c_2$ is an arbitrary positive constant. Therefore, we can write:
    $$
    A(z;t)=\biggl(c_1e^{(1-z)t^2/2}-c_2e^{(1-z)t^2/2}\int e^{(z-1)t^2/2}dt\biggr)^{-1}+(z-1)t-1.
    $$
   Involving $A(z;0)=0$ and $A'(z;0)=z$, we get~\eqref{genlpeak0}.
\end{proof}

For $\sigma\in\S_k$ and a word $w=w_1w_2\cdots w_n$ of positive integers, an occurrence of  (consecutive pattern) $\sigma$ in $w$ is a subword $w_iw_{i+1}\cdots w_{i+k-1}$ that is order isomorphism to $\sigma$. Let $\underline\sigma(w)$ denote the number of occurrences of   $\sigma$ in $w$. Then, $\pk_1(\pi)=\underline{132}(0\pi)$ and $\pk_2(\pi)=\underline{231}(\pi0)$ for any $\pi\in\S_n$ with $n\geq2$. Consider the enumerator of permutations by the number of occurrences of $132$ as
$$
B_n(z):=\sum_{\pi\in\S_n} z^{\underline{132}(\pi)}.
$$
Elizalde and Noy~\cite[Theorem~4.1]{EN} proved that 
$$
\sum_{n\geq0}B_n(z)\frac{t^n}{n!}=\frac{1}{1-\int_0^{t}e^{(z-1)x^2/2}dx},
$$
where by convention $B_0(z)=1$. 
Comparing with~\eqref{genlpeak0} we get the following relationship. 
\begin{corollary}
For $n\geq2$, we have 
$$
\sum_{\pi\in\S_n}z^{\pk_1(\pi)}=\sum_{k=0}^{\lfloor n/2\rfloor}\frac{(z-1)^k}{2^k}{n\choose k}B_{n-2k}(z). 
$$
\end{corollary}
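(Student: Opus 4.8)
The plan is to establish the identity entirely at the level of exponential generating functions, by playing off the explicit formula for $A(z;t)=\sum_{n\geq1}\sum_{\pi\in\S_n}z^{\pk_1(\pi)}\frac{t^n}{n!}$ from the preceding proposition against the Elizalde--Noy formula for $\sum_{n\geq0}B_n(z)\frac{t^n}{n!}$. The decisive observation is that both series are built from the \emph{same} factor $\bigl(1-\int_0^t e^{(z-1)x^2/2}\,dx\bigr)^{-1}$, so that dividing one by the other removes all the transcendental content and leaves only an elementary Gaussian factor.

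First I would isolate this common factor. Writing $F(t):=\sum_{n\geq0}B_n(z)\frac{t^n}{n!}=\bigl(1-\int_0^t e^{(z-1)x^2/2}\,dx\bigr)^{-1}$ for the Elizalde--Noy series and recalling from the proposition that $A(z;t)=\dfrac{e^{(z-1)t^2/2}}{1-\int_0^t e^{(z-1)x^2/2}dx}+(z-1)t-1$, one reads off at once the factorization
\begin{equation*}
A(z;t)+1-(z-1)t=e^{(z-1)t^2/2}\,F(t).
\end{equation*}
This is the engine of the whole argument: it expresses the generating function for $\pk_1$, up to an explicit correction living only in degrees $0$ and $1$, as the product of a Gaussian factor with the consecutive-$132$ enumerator.

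Next I would extract coefficients. Since $\sum_{\pi\in\S_n}z^{\pk_1(\pi)}$ is the coefficient of $t^n/n!$ in $A(z;t)$, and since the correction $(z-1)t-1$ contributes only to the coefficients of $t^0$ and $t^1$, for every $n\geq2$ the desired sum equals the coefficient of $t^n/n!$ in $e^{(z-1)t^2/2}F(t)$ (this is precisely where the hypothesis $n\geq2$ enters). Expanding the Gaussian factor as $e^{(z-1)t^2/2}=\sum_{k\geq0}\frac{(z-1)^k}{2^k\,k!}t^{2k}$ and applying the binomial convolution rule for exponential generating functions, each even power $t^{2k}$ pairs with $B_{n-2k}(z)$ and acquires the factor $\binom{n}{2k}$ from the convolution together with the scalar produced by reinterpreting $t^{2k}$ in exponential form. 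Collecting these contributions over $0\leq k\leq\lfloor n/2\rfloor$ and simplifying the resulting coefficient produces the claimed sum.

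The only step that is not purely formal is the coefficient bookkeeping in the last paragraph: one must combine the factorial weight $\frac{(2k)!}{2^k k!}$ coming from the even Taylor coefficients of the Gaussian with the binomial coefficient $\binom{n}{2k}$ from the convolution, and then simplify carefully to land on the binomial coefficient appearing in the statement. I expect this simplification — rather than the generating-function manipulation, which is immediate from the two cited formulas — to be the delicate point, and it is worth checking the resulting coefficient against the small cases $n=2,3,4$ before committing to the closed form.
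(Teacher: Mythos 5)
Your approach is precisely the paper's own: the paper's entire proof consists of ``comparing'' \eqref{genlpeak0} with the Elizalde--Noy formula, i.e.\ exactly your factorization $A(z;t)+1-(z-1)t=e^{(z-1)t^2/2}F(t)$ followed by extraction of coefficients via the exponential convolution. Up to the last step your argument is sound, and your instinct to distrust the final simplification is vindicated---but not in the way you hoped. The coefficient your bookkeeping produces, namely $\binom{n}{2k}\frac{(2k)!}{2^k k!}=\frac{n!}{2^k\,k!\,(n-2k)!}$, does \emph{not} simplify to $\frac{1}{2^k}\binom{n}{k}$: equality would require $(n-2k)!=(n-k)!$, which for $k\geq1$ forces $n\leq 2$. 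So the ``delicate point'' you deferred is a genuine obstruction, and what your computation actually proves is the corrected identity
$$
\sum_{\pi\in\S_n}z^{\pk_1(\pi)}=\sum_{k=0}^{\lfloor n/2\rfloor}\frac{(z-1)^k}{2^k\,k!}\,\frac{n!}{(n-2k)!}\,B_{n-2k}(z)=\sum_{k=0}^{\lfloor n/2\rfloor}(2k-1)!!\binom{n}{2k}(z-1)^k\,B_{n-2k}(z),
$$
which differs from the corollary as printed for every $n\geq3$ (the printed form presumably arose from mis-simplifying $\binom{n}{2k}\frac{(2k)!}{2^k k!}$).

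The small-case check you proposed settles this immediately at $n=3$. From the paper's $A_3(x,y,z_1,z_2)$ one gets $\sum_{\pi\in\S_3}z^{\pk_1(\pi)}=4z+2$, and with $B_3(z)=5+z$, $B_1(z)=1$ the corrected coefficient gives $(5+z)+3(z-1)=4z+2$ as it should, whereas the stated right-hand side gives $(5+z)+\tfrac{3}{2}(z-1)=\tfrac{5z+7}{2}$, which is not even correct at $z=0$. (The case $n=2$ agrees coincidentally, since there $(n-2k)!=(n-k)!$ for the only relevant term $k=1$.) So do not force your coefficient into the displayed form: complete the convolution exactly as you outlined, record the identity with coefficient $(2k-1)!!\binom{n}{2k}$, and flag the statement itself as containing an algebraic slip.
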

 It would be interesting to find a combinatorial proof of the above relationship. 

%%%%%%%%%%%%%%%%%%%%%%%%%%%%%%%%%%%%%%%%%%%%%%%%
\subsection{Symmetries in $312$-avoiding permutations
%: maximal non-overlapping descents and consecutive patterns
} %%%%%%%%%%%%%%%%%%%%%%%%%%%%%%%%%%%%%%
%%%%%%%%%%%%%%%%%%%%%%%%%%%%%%%%%%%%%%%%%%%%%%%%

A permutation $\pi\in \S_n$ is said to be {\em$312$-avoiding} if  there are no  indices $1\leq i<j<k\leq n$ such that $\pi_{i}>\pi_k>\pi_j$. If we denote $\S_n(312)$ the set of all $312$-avoiding permutations in $\S_n$, then a classical  result of Knuth (see~\cite[Section~2.1.1]{PPW}) asserts  that $|\S_n(312)|=C_n$.  In this subsection, we prove two symmetries in $312$-avoiding permutations involving maximum number of non-overlapping ascents/descents and some consecutive patterns of length $4$.

Let us recall some classical permutation statistics.  An {\em ascent} (resp., A~{\em descent}) in a permutation $\pi\in\S_n$ is an index $i\in[n-1]$ such that $\pi_i<\pi_{i+1}$ (resp.,~$\pi_i>\pi_{i+1}$). Denote by $\ASC(\pi)$ and $\DES(\pi)$ the set of all ascents and the set of all descents of $\pi$, respectively. Let $\asc(\pi):=|\ASC(\pi)|$ and $\des(\pi):=|\DES(\pi)|$.
Furthermore, the number of  {\em left-to-right} (resp.,~{\em right-to-left}) {\em minima} of $\pi$ is denoted by $\lmin(\pi)$ (resp.,~$\rmin(\pi)$). The {\em left descending run} (resp.,~{\em right ascending run}) of $\pi$, namely the largest $j\in[n]$ such that the subsequence $\pi_1\pi_2\cdots\pi_j$ (resp.,~$\pi_{n-j+1}\pi_{n-j+2}\dots\pi_n$) of $\pi$ is decreasing (resp.,~increasing), is denoted as $\ldr(\pi)$ (resp.,~$\rar(\pi)$). 

Two  indices  $i$ and $j$, $i<j$,  are said to be {\em overlapping} if $j = i+1$. Kitaev~\cite{Kit} introduced  the {\em maximum number of non-overlapping ascents} (resp.,~{\em descents}) in a permutation $\pi$, denoted $\mna(\pi)$ (resp.,~$\mnd(\pi)$). For example,  $\mna(231485697)= 3$ and $\mnd(126543) = 2$. 
Recently, Kitaev and Zhang~\cite{KZ} investigated the statistics ``$\mna$'' and ``$\mnd$'' over stack-sortable permutations (i.e., $231$-avoiding permutations) and proved a symmetry that is equivalent to the following result. 

\begin{theorem}[Kitaev-Zhang~\cite{KZ}]\label{thm:KZ}
The following two $8$-tuples of statistics
  \begin{align*}
       &(\mnd, \mna,  \des,\asc, \ldr, \rar,   \lmin, \rmin)\\
      &(\mna, \mnd, \asc,\des, \rar, \ldr,  \rmin, \lmin).   
    \end{align*}
     are equidistributed over $\S_n(312)$ for $n\geq1$.
\end{theorem}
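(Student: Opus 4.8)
The plan is to transport the eight statistics to increasing binary trees via the bijection $\lambda:\S_n\to\I_n$ and to realize the symmetry through a left--right mirror, closely related to the full switching $\phi=\prod_x\phi_x$ used in the proof of Theorem~\ref{Thm:sym1}. First I would record the image of $\S_n(312)$ under $\lambda$. Writing $\pi=\sigma 1\tau$ with $1$ the least letter, one checks that $\pi$ avoids $312$ if and only if $\sigma$ and $\tau$ avoid $312$ and, as value sets, every letter of $\sigma$ is smaller than every letter of $\tau$ (otherwise a large letter of $\sigma$, the letter $1$, and a smaller letter of $\tau$ form a $312$). Translating through $\lambda$, the trees $\lambda(\S_n(312))$ are exactly the increasing binary trees in which, at every node, all labels in the left branch are smaller than all labels in the right branch; such a tree is determined by its underlying shape, so $\lambda$ restricts to a bijection between $\S_n(312)$ and the set $\B_n$ of binary tree shapes on $n$ nodes (consistent with $|\S_n(312)|=C_n$).

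The crucial point is that all eight statistics become \emph{shape statistics} on increasing binary trees, and most of them factor through the descent set. Reading $\pi$ as the in-order traversal of $\lambda(\pi)$, the in-order successor of a node lies in its right branch when that branch is nonempty (an ascent, since descendants exceed ancestors in a min-heap) and is the nearest ancestor otherwise (a descent); hence $\DES(\pi)=\{i\in[n-1]:\text{the $i$-th in-order node has no right child}\}$, an identity that uses \emph{only} the min-heap property. Consequently $\des,\asc,\ldr,\rar,\mnd,\mna$ are all functions of this shape-determined descent set, since each is computed from the block structure of $\DES(\pi)$ inside $[n-1]$. Separately, a short induction on the min-heap decomposition shows that $\lmin(\pi)$ and $\rmin(\pi)$ equal the lengths of the left and right spines of $\lambda(\pi)$, again shape statistics valid for any min-heap labeling.

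I would then define the involution $\Phi$ on $\S_n(312)$ induced by the shape mirror, i.e.\ $\Phi(\pi)$ is the unique $312$-avoiding permutation whose tree shape is the mirror of $\mathrm{sh}(\pi)$. The key observation is that $\mathrm{sh}(\Phi(\pi))=\mathrm{mirror}(\mathrm{sh}(\pi))=\mathrm{sh}(\pi^R)$, where $\pi^R$ is the reversal of $\pi$; note that $\pi^R$ is $213$-avoiding and so lies outside $\S_n(312)$, yet $\lambda(\pi^R)$ is still an increasing binary tree and indeed equals $\phi(\lambda(\pi))$. Because each of the eight statistics is shape-determined on \emph{all} increasing binary trees, and $\Phi(\pi)$ and $\pi^R$ share the same shape, we obtain $s(\Phi(\pi))=s(\pi^R)$ for every statistic $s$ among the eight. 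Combining this with the elementary reversal identities $\DES(\pi^R)=\{n-i:i\in\ASC(\pi)\}$ and $\lmin(\pi^R)=\rmin(\pi)$ (whence $\des,\asc,\ldr,\rar,\mnd,\mna,\lmin,\rmin$ are interchanged in pairs), one gets
\begin{align*}
&(\mnd,\mna,\des,\asc,\ldr,\rar,\lmin,\rmin)(\Phi(\pi))\\
&\qquad=(\mna,\mnd,\asc,\des,\rar,\ldr,\rmin,\lmin)(\pi),
\end{align*}
and since $\Phi$ is a bijection this is exactly the claimed equidistribution.

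The main obstacle I expect is the shape-determinacy of the descent set together with the run interpretations of $\mnd,\mna,\ldr,\rar$: one must verify carefully that the maximum non-overlapping matchings and the initial/final runs depend only on $\DES(\pi)$ as a subset of $[n-1]$, and that the map $i\mapsto n-i$ sends these quantities for $\pi^R$ to their ascent counterparts for $\pi$. The conceptual crux is the bridge between $\Phi(\pi)$ (canonically labeled, inside $\S_n(312)$) and $\pi^R$ (in a different avoidance class but of the same shape); it is precisely this bridge that lets the mirror $\phi$ prove a symmetry that ordinary reversal cannot establish directly, since reversal leaves $\S_n(312)$.
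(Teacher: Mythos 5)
Correct, and essentially the paper's own route: your $\Phi$ is precisely the involution $\Lambda=\tilde\lambda^{-1}\circ\phi\circ\tilde\lambda$ of Theorem~\ref{sym:KZ} (the mirror acting on shapes, using that $312$-avoidance forces preorder labeling so $\tilde\lambda:\S_n(312)\to\B_n$ is a bijection), and like the paper you reduce everything to $\DES$ determining the sextuple $(\mnd,\mna,\des,\asc,\ldr,\rar)$ together with $\lmin,\rmin$ counting the left and right arms. The only difference is bookkeeping: you verify the descent exchange via the in-order (no-right-child) characterization of $\DES$ and the reversal bridge $\lambda(\pi^R)=\phi(\lambda(\pi))$, whereas the paper reads $\DES$ off the preorder $L/R$ leaves of the complement tree $\bar B$; note only that your parenthetical claim that $213$-avoidance puts $\pi^R$ outside $\S_n(312)$ is false in general (e.g.\ when $\pi^R$ is decreasing), but nothing in your argument relies on it.
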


As an application of the minor symmetry of binary trees, we can generalize the above result as follows. 

\begin{theorem}\label{sym:KZ}
For $\pi\in\S_n$, let $\widetilde\ASC(\pi):=\{n-i: i\in\ASC(\pi)\}$. There exists an involution $\Lambda$ on $\S_n(312)$ that transforms the tripe $(\DES,\pk_1,\lmin)$ to $(\widetilde\ASC,\pk_2,\rmin)$. In particular, $\Lambda$ provides an involution proof of Theorem~\ref{thm:KZ}. 
\end{theorem}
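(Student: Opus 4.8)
The plan is to realize $\Lambda$ as the conjugate of the minor symmetry of binary trees by the bijection $\lambda$, restricted to the Catalan family of shapes cut out by the pattern $312$. First I would record the recursive structure of $312$-avoiding permutations: writing $\pi=\sigma 1\tau$ with $1$ the least letter, $\pi$ avoids $312$ if and only if every letter of $\sigma$ is smaller than every letter of $\tau$ and both $\sigma,\tau$ avoid $312$. Translating this through $\lambda$ (whose root is the least letter, with $\lambda(\sigma),\lambda(\tau)$ the two branches) gives that $\lambda$ restricts to a bijection from $\S_n(312)$ onto the set of $n$-node binary tree \emph{shapes}: the increasing labeling is forced to be the pre-order labeling, in which at every node all labels in the left branch are smaller than all labels in the right branch. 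Consequently the minor symmetry $\phi=\prod_x\phi_x$ of the proof of Theorem~\ref{Thm:sym1} acts on these shapes, and after restoring the forced pre-order labeling it descends to an honest involution $\Lambda=\lambda^{-1}\circ\phi\circ\lambda$ on $\S_n(312)$. Carrying the old labels along would keep the tree increasing but destroy the $312$-condition, which is exactly why I pass to shapes and relabel.

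The second step is to express each statistic purely in terms of the shape of $B=\lambda(\pi)$, so that the relabeling is invisible to it. By Lemma~\ref{lem:2types}, $\pk_1(\pi)$ and $\pk_2(\pi)$ count the left and right leaves of $B$. Reading $\pi$ as the in-order traversal $v_1,\dots,v_n$ of $B$ and using that $B$ is increasing, I would check that $\lmin(\pi)$ equals the length of the left spine of $B$ (repeatedly take the left child from the root) and $\rmin(\pi)$ the length of the right spine, and that the gap between $v_i$ and $v_{i+1}$ is a descent precisely when $v_i$ has no right child, equivalently when $v_{i+1}$ has a left child. Thus $\DES(\pi)=\{\,i: v_{i+1}\text{ has a left child}\,\}$ and $\ASC(\pi)=\{\,i: v_{i+1}\text{ has no left child}\,\}$, all of which are shape data.

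Next I track these quantities under the mirror. The key structural fact is that mirroring reverses the in-order traversal: if the in-order sequence of $B$ is $v_1,\dots,v_n$, then that of $\phi(B)$ is $\overline{v_n},\dots,\overline{v_1}$, where $\overline{v_k}$ is the image of $v_k$ and has a left child exactly when $v_k$ has a right child. Mirroring interchanges left and right leaves and interchanges the two spines, yielding $\pk_2(\Lambda\pi)=\pk_1(\pi)$ and $\rmin(\Lambda\pi)=\lmin(\pi)$ at once. For the descent set I would combine the two displayed facts: $j\in\ASC(\Lambda\pi)$ iff the $j$-th in-order node $\overline{v_{n+1-j}}$ of $\phi(B)$ has a right child iff $v_{n+1-j}$ has a left child iff $(n-j)\in\DES(\pi)$; running over $j$ this is exactly $\DES(\pi)=\widetilde{\ASC}(\Lambda\pi)$. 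I expect this off-by-one reconciliation --- matching the position reflection $i\mapsto n-i$ hidden in $\widetilde{\ASC}$ with the in-order reversal, which forces the switch from the ``$v_i$ has no right child'' description of a descent to the ``$v_{i+1}$ has a left child'' description --- to be the one genuinely fiddly point of the argument.

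Finally, for the ``in particular'' clause I would deduce Theorem~\ref{thm:KZ} formally. Since $\ASC(\pi)=[n-1]\setminus\DES(\pi)$, every statistic in the two $8$-tuples except $\lmin,\rmin$ is a function of the descent set alone: $\des,\asc$ are its cardinality and co-cardinality, $\mnd,\mna$ are the largest subsets of $\DES,\ASC$ with no two consecutive positions, and $\ldr-1,\rar-1$ are the lengths of the initial descent run and the final ascent run. Because the reflection $i\mapsto n-i$ underlying $\widetilde{\ASC}$ preserves adjacency and swaps ``initial'' with ``final'', the identity $\DES(\pi)=\widetilde{\ASC}(\Lambda\pi)$ propagates to $\mnd(\pi)=\mna(\Lambda\pi)$, $\des(\pi)=\asc(\Lambda\pi)$, $\ldr(\pi)=\rar(\Lambda\pi)$, and --- using that $\Lambda$ is an involution --- their mirror images; together with $\lmin(\pi)=\rmin(\Lambda\pi)$ this shows that the first $8$-tuple at $\pi$ equals the second $8$-tuple at $\Lambda\pi$, so the two are equidistributed over $\S_n(312)$.
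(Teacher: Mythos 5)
Your proposal is correct and takes essentially the same approach as the paper: both realize $\Lambda$ as the mirror involution $\phi$ on unlabeled binary trees conjugated by $\lambda$ restricted to $\S_n(312)$ (where the increasing labeling is forced to coincide with the preorder labeling, so one may pass to shapes), and both read off $\pk_1/\pk_2$ from left/right leaves, $\lmin/\rmin$ from the two arms, and then deduce Theorem~\ref{thm:KZ} from the fact that the descent set determines the remaining six statistics. The only difference is bookkeeping for $\DES(\pi)=\widetilde\ASC(\Lambda(\pi))$: the paper marks descents via the $L/R$ pattern of the added leaves in the completed full binary tree $\bar{B}$ read in preorder, while you use the equivalent in-order characterization (descent at $i$ iff $v_i$ has no right child, iff $v_{i+1}$ has a left child), which the mirror visibly reverses --- the same fact in different clothing.
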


\begin{proof}
For $\pi\in\S_n(312)$, it is plain to see that the nodes of the increasing binary tree $\lambda(\pi)$ are exactly labeled according to their preorder (see Fig.~\ref{bij:lambda} where the permutation is $312$-avoiding). Thus, we can remove all labels of $\lambda(\pi)$ and denote the resulting binary tree by $\tilde\lambda(\pi)$.  It is clear that $\tilde\lambda:\S_n(312)\rightarrow\B_n$ is a bijection. Now define $\Lambda=\tilde\lambda^{-1}\circ\phi\circ\tilde\lambda$, where $\phi$ is the minor symmetry on $\B_n$.
See Fig.~\ref{fig:Lambda} for an example of $\Lambda$.

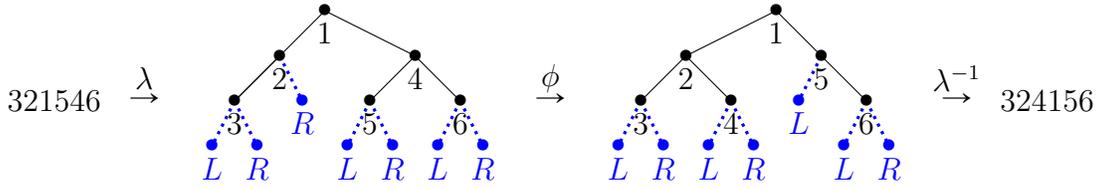
\begin{figure}
\begin{center}
\begin{tikzpicture}[scale=0.3]

\draw[very thick,dotted,blue](6,2) to (7,0);
\draw[very thick,dotted,blue](8,4) to (9,2);
\draw[very thick,dotted,blue](12,2) to (11,0);
\draw[very thick,dotted,blue](12,2) to (13,0);
\draw[very thick,dotted,blue](16,2) to (15,0);
\draw[very thick,dotted,blue](16,2) to (17,0);
\draw[very thick,dotted,blue](6,2) to (5,0);

\node at  (-2,2){$321546$};\node at  (2,2){$\rightarrow$};
\node at  (2,3){$\lambda$};
%%%%%%%%%%%第一棵树
\node at  (10,6){$\bullet$};\node at  (10,5){$1$};
\node at  (8,4){$\bullet$};\node at  (8,3){$2$};
\node at  (6,2){$\bullet$};\node at  (6,1){$3$};
\node at  (14,4){$\bullet$};\node at  (14,3){$4$};
\node at  (12,2){$\bullet$};\node at  (12,1){$5$};
\node at  (16,2){$\bullet$};\node at  (16,1){$6$};
\draw[-](10,6) to (8,4);\draw[-](10,6) to (14,4);
\draw[-](14,4) to (12,2);\draw[-](14,4) to (16,2);
\draw[-](8,4) to (6,2);
\node at  (5,0){\blue{$\bullet$}};\node at  (5,-1){{\blue{$L$}}};
\node at  (7,0){\blue{$\bullet$}};\node at  (7,-1){{\blue{$R$}}};
\node at  (9,2){\blue{$\bullet$}};\node at  (9,1){{\blue{$R$}}};
\node at  (11,0){\blue{$\bullet$}};\node at  (11,-1){{\blue{$L$}}};
\node at  (13,0){\blue{$\bullet$}};\node at  (13,-1){{\blue{$R$}}};
\node at  (15,0){\blue{$\bullet$}};\node at  (15,-1){{\blue{$L$}}};
\node at  (17,0){\blue{$\bullet$}};\node at  (17,-1){{\blue{$R$}}};

\node at (20,2){$\rightarrow$};\node at (20,3){$\phi$};
%%%%%%%%%%%%%%%%%%%%第二棵树

\draw[very thick,dotted,blue](24,2) to (23,0);
\draw[very thick,dotted,blue](24,2) to (25,0);
\draw[very thick,dotted,blue](28,2) to (27,0);
\draw[very thick,dotted,blue](28,2) to (29,0);
\draw[very thick,dotted,blue](32,4) to (31,2);
\draw[very thick,dotted,blue](34,2) to (33,0);
\draw[very thick,dotted,blue](34,2) to (35,0);

\node at  (30,6){$\bullet$};\node at  (30,5){$1$};
\node at  (28,2){$\bullet$};\node at  (28,1){$4$};
\node at  (24,2){$\bullet$};\node at  (24,1){$3$};
\node at  (32,4){$\bullet$};\node at  (32,3){$5$};
\node at  (26,4){$\bullet$};\node at  (26,3){$2$};
\node at  (34,2){$\bullet$};\node at  (34,1){$6$};
\draw[-](30,6) to (26,4);\draw[-](30,6) to (32,4);
\draw[-](26,4) to (24,2);\draw[-](26,4) to (28,2);
\draw[-](32,4) to (34,2);\draw[-](8,4) to (6,2);
\node at  (23,0){\blue{$\bullet$}};\node at  (23,-1){{\blue{$L$}}};
\node at  (25,0){\blue{$\bullet$}};\node at  (25,-1){{\blue{$R$}}};
\node at  (27,0){\blue{$\bullet$}};\node at  (27,-1){{\blue{$L$}}};
\node at  (29,0){\blue{$\bullet$}};\node at  (29,-1){{\blue{$R$}}};
\node at  (31,2){\blue{$\bullet$}};\node at  (31,1){{\blue{$L$}}};
\node at  (33,0){\blue{$\bullet$}};\node at  (33,-1){{\blue{$L$}}};
\node at  (35,0){\blue{$\bullet$}};\node at  (35,-1){{\blue{$R$}}};

\node at (38,2){$\rightarrow$};\node at (38,3){$\lambda^{-1}$};
\node at  (42,2){$324156$};
\end{tikzpicture}
\end{center}
\caption{An example of the involution $\Lambda$.\label{fig:Lambda}}
\end{figure} 

Let $B=\lambda(\pi)$. To facilitate the proof, we introduce  the {\em complement of a binary tree}. We assign  leaves labeled by $L$ or $R$ to $B$ to obtain a full binary tree, ensuring that all original nodes in $B$ become internal nodes. We use $\bar{B}$ to denote such a complement of $B$ where the added leaf in $\bar B$ is labeled by $L$ iff it is a left leaf. See Fig.~\ref{fig:complement} for an example of $\bar B$. 
It is plain to see that the index $i$ is a descent of $\pi$ iff the $(i+1)$-th (under preorder)  new leaf in $\bar B$ is $R$. Thus, $\DES(\pi)=\widetilde\ASC(\Lambda(\pi))$. 
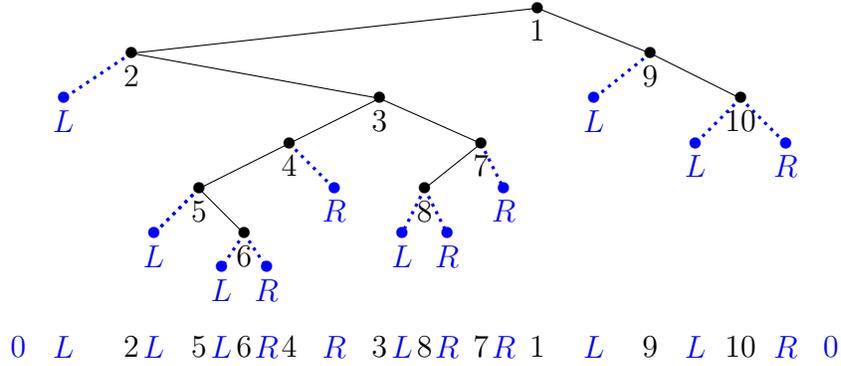
\begin{figure}
\centering
\begin{tikzpicture}[scale=0.3]

\draw[very thick,dotted,blue](-3,8) to (-6,6);%\draw[very thick,dotted,blue](15,10) to (20,8);
\draw[very thick,dotted,blue](0,2) to (-2,0);\draw[very thick,dotted,blue](2,0) to (1,-1.5);
\draw[very thick,dotted,blue](2,0) to (3,-1.5);\draw[very thick,dotted,blue](4,4) to (6,2);
\draw[very thick,dotted,blue](10,2) to (11,0);\draw[very thick,dotted,blue](10,2) to (9,0);
\draw[very thick,dotted,blue](0,2) to (-2,0);\draw[very thick,dotted,blue](12.5,4) to (13.5,2);
\draw[very thick,dotted,blue](20,8) to (17.5,6);\draw[very thick,dotted,blue](24,6) to (22,4);
\draw[very thick,dotted,blue](24,6) to (26,4);

\node at  (-6,-5){\blue{$L$}};\node at  (-2,-5){\blue{$L$}};
\node at  (1,-5){\blue{$L$}};\node at  (3,-5){\blue{$R$}};
\node at  (6,-5){\blue{$R$}};\node at  (9,-5){\blue{$L$}};
\node at  (11,-5){\blue{$R$}};\node at  (13.5,-5){\blue{$R$}};
\node at  (17.5,-5){\blue{$L$}};\node at  (22,-5){\blue{$L$}};
\node at  (26,-5){\blue{$R$}};
\node at  (-8,-5){\blue{$0$}};
\node at  (28,-5){\blue{$0$}};

\node at  (15,10){$\bullet$};\node at  (15,9){$1$};
\node at  (-3,8){$\bullet$};\node at  (-3,7){$2$};
\node at  (20,8){$\bullet$};\node at  (20,7){$9$};
\node at  (8,6){$\bullet$};\node at  (8,5){$3$};
\node at  (24,6){$\bullet$};\node at  (24,5){$10$};
\node at  (4,4){$\bullet$};\node at  (4,3){$4$};
\node at  (12.5,4){$\bullet$};\node at  (12.5,3){$7$};
\node at  (0,2){$\bullet$};\node at  (0,1){$5$};
\node at  (10,2){$\bullet$};\node at  (10,1){$8$};
\node at  (2,0){$\bullet$};\node at  (2,-1){$6$};

\draw[-](15,10) to (-3,8);\draw[-](15,10) to (20,8);
\draw[-](-3,8) to (8,6);\draw[-](20,8) to (24,6);
\draw[-](8,6) to (4,4);\draw[-](8,6) to (12.5,4);
\draw[-](4,4) to (0,2);\draw[-](12.5,4) to (10,2);
\draw[-](0,2) to (2,0);

\node at  (15,-5){$1$};
\node at  (-3,-5){$2$};
\node at  (20,-5){$9$};
\node at  (8,-5){$3$};
\node at  (24,-5){$10$};
\node at  (4,-5){$4$};
\node at  (12.5,-5){$7$};
\node at  (0,-5){$5$};
\node at  (10,-5){$8$};
\node at  (2,-5){$6$};

\node at  (-6,6){\blue{$\bullet$}};\node at  (-6,5){\blue{$L$}};
\node at  (-2,0){\blue{$\bullet$}};\node at  (-2,-1){\blue{$L$}};
\node at  (1,-1.5){\blue{$\bullet$}};\node at  (1,-2.5){\blue{$L$}};
\node at  (3,-1.5){\blue{$\bullet$}};\node at  (3,-2.5){\blue{$R$}};
\node at  (6,2){\blue{$\bullet$}};\node at  (6,1){\blue{$R$}};
\node at  (9,0){\blue{$\bullet$}};\node at  (9,-1){\blue{$L$}};
\node at  (11,0){\blue{$\bullet$}};\node at  (11,-1){\blue{$R$}};
\node at  (13.5,2){\blue{$\bullet$}};\node at  (13.5,1){\blue{$R$}};
\node at  (17.5,6){\blue{$\bullet$}};\node at  (17.5,5){\blue{$L$}};
\node at  (22,4){\blue{$\bullet$}};\node at  (22,3){\blue{$L$}};
\node at  (26,4){\blue{$\bullet$}};\node at  (26,3){\blue{$R$}};

\end{tikzpicture}
\caption{An example of the complement of a binary tree $\bar B$.\label{fig:complement}}
\end{figure}

The fact that $\pk_1(\pi)=\pk_2(\Lambda(\pi))$ follows from Lemma~\ref{lem:2types} and the construction of $\Lambda$. 

The {\em left arm} (resp.,~{\em right arm}) of $B$ is the maximal chain starting from the root and compositing only left (resp.,~right) edges. Form the inverse map $\lambda^{-1}$, it is clear that $\lmin(\pi)$ (resp.,~$\rmin(\pi)$) equals the number of nodes in the left (resp.,~right) arm of $B$. The fact that $\lmin(\pi)=\rmin(\Lambda(\pi))$ then follows from the construction of $\Lambda$. 

Finally, as $\DES(\pi)$ (or $\widetilde\ASC(\pi)$) determines the sextuple  $(\mnd, \mna,  \des,\asc, \ldr, \rar)(\pi)$, we see that $\Lambda$ provides an involution proof of Theorem~\ref{thm:KZ}. 
\end{proof}

For a permutation $\pi$, let $\st_1(\pi):=\underline{1324}(0\pi)$ and $\st_2(\pi):=\underline{3241}(\pi0)$.

   \begin{theorem}
     There exists an involution $\Upsilon$ on $\S_n(312)$ that preserves the triple $(\pk,\dd,\da)$ but exchanges the pair $(\st_1,\st_2)$.
 \end{theorem}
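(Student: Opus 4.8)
The plan is to transport the whole statement to binary trees through the bijection $\tilde\lambda:\S_n(312)\to\B_n$ already used in the proof of Theorem~\ref{sym:KZ}, and then apply a \emph{partial} branch-switching involution rather than the full minor symmetry. Writing $B=\tilde\lambda(\pi)$, I would first record the three statistics that must be preserved: by Lemma~\ref{lem:2types}, $\pk(\pi)=\pk_1(\pi)+\pk_2(\pi)$ is the total number of leaves of $B$, while $\dd(\pi)$ (resp.~$\da(\pi)$) is the number of nodes of $B$ having only a left (resp.~right) child. Note that the \emph{full} minor symmetry $\phi$ would interchange $\dd$ and $\da$, so it cannot be used here; the right map must switch branches only at nodes with two children.

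The crux is to read the two length-four consecutive patterns off the tree. Recall that for a $312$-avoiding permutation the nodes of $\lambda(\pi)$ carry their preorder labels, so positions in $\pi$ are inorder indices and values are preorder indices. Expanding the pattern $1324$, a factor $w_jw_{j+1}w_{j+2}w_{j+3}$ of $0\pi$ is an occurrence exactly when $w_{j+1}$ is a type~$1$ peak together with $w_{j+1}<w_{j+3}$. Through $\tilde\lambda$, such a $w_{j+1}$ is a left leaf $v$, its inorder successor $w_{j+2}$ is its parent $p$, and the inorder successor $w_{j+3}$ of $p$ has preorder index exceeding that of $v$ precisely when $p$ has a right child (otherwise $w_{j+3}$ is an ancestor of $v$, with smaller preorder index, and the relation fails). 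Hence $\st_1(\pi)$ equals the number of left leaves of $B$ with a right sibling. A mirror analysis of the pattern $3241$ in $\pi0$ gives that $\st_2(\pi)$ equals the number of right leaves of $B$ with a left sibling. These are exactly the binary-tree statistics $\entleaf$ and $\suleaf$ of Lemma~\ref{obv:rho2}.

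With this dictionary, the involution is the map $\psi=\prod_{x\in\Unb(B)}\phi_x$ from the proof of Theorem~\ref{Thm:sym2}, where $\Unb(B)$ is the set of unbalanced nodes (one child a leaf, the other internal), and I would set $\Upsilon=\tilde\lambda^{-1}\circ\psi\circ\tilde\lambda$. Three verifications finish the argument. Since every switched node has two children, $\psi$ fixes the number of leaves and, because each subtree is preserved internally, also fixes the numbers of only-left-child and only-right-child nodes; thus $(\pk,\dd,\da)$ is preserved. Since $\Unb(B)=\Unb(\psi(B))$, the map $\psi$ is an involution. Finally, exactly as in Theorem~\ref{Thm:sym2} one has $\suleaf(B)=|\{x\in\Unb(B):\text{right child of }x\text{ is a leaf}\}|+\twin(B)$ and $\entleaf(B)=|\{x\in\Unb(B):\text{left child of }x\text{ is a leaf}\}|+\twin(B)$, and switching at every unbalanced node interchanges the two bracketed sets, so $\psi$ exchanges $\entleaf$ and $\suleaf$; that is, $\Upsilon$ exchanges $\st_1$ and $\st_2$.

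The main obstacle is the second paragraph: carefully confirming the translation of the two consecutive patterns of length four, including the boundary occurrences created by the padding letters in $0\pi$ and $\pi0$. One must check that whenever the relevant parent has the required child the neighbouring position actually exists, so that a genuine length-four factor is present, and that the prepended or appended $0$ produces no spurious occurrences. Once this bookkeeping is settled, everything else is inherited from the already-established structural properties of $\psi$.
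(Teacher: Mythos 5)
Your proposal is correct and follows essentially the same route as the paper: the same conjugation $\Upsilon=\tilde\lambda^{-1}\circ\psi\circ\tilde\lambda$ with the unbalanced-node switching involution $\psi$ from Theorem~\ref{Thm:sym2}, together with the same dictionary $\st_1(\pi)=\entleaf(B)$ and $\st_2(\pi)=\suleaf(B)$ identifying consecutive $1324$ (resp.~$3241$) occurrences with left (resp.~right) leaves whose parent has the opposite child. Your inorder-successor verification of this dictionary, including the boundary bookkeeping for $0\pi$ and $\pi 0$, matches the paper's argument via the inequalities $\pi_{i-1}<\pi_{i+1}<\pi_i<\pi_{i+2}$.
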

 \begin{proof}
 
Recall the bijection $\tilde\lambda:\S_n(312)\rightarrow\B_n$ in the proof of Theorem~\ref{sym:KZ} and the involution $\psi$ on $\B_n$ introduced in the proof of Theorem~\ref{Thm:sym2}. Set $\Upsilon=\tilde\lambda^{-1}\circ\psi\circ\tilde\lambda$. 
It follows from Lemma~\ref{lem:2types} and the construction of $\psi$ that $\Upsilon$ preserves the triple $(\pk,\dd,\da)$.

     To demonstrate that $\Upsilon$ exchanges the pair $(\st_1,\st_2)$, we claim that a  consecutive $1324$ (resp.,~$3241$) pattern in $\pi\in\S_n(312)$ is maps to a left (resp.,~right) leaf whose parent has right (resp.,~left) child in the binary tree $B=\lambda(\pi)$. If $\pi$ contains a consecutive $1324$ pattern $\pi_{i-1}\pi_i\pi_{i+1}\pi_{i+2}$, then $\pi_i$ is a peak of type 1 and so $\pi_i$ is a left leaf by Lemma~\ref{lem:2types}~(i). Moreover,  $\pi_{i+2}>\pi_{i+1}$ implies that $\pi_{i+1}$ has right child. 
Conversely, if the node $\pi_i$ is a left  leaf in $B$ whose parent has right  child, then $\pi_{i-1}\pi_i\pi_{i+1}$ is a peak of type 1 by Lemma~\ref{lem:2types}~(i). Furthermore, as $\pi_{i+1}$ is the parent of $\pi_i$ in $B$, it has right child. Thus, $\pi_{i-1}<\pi_{i+1}<\pi_i<\pi_{i+2}$ and so $\pi_{i-1}\pi_i\pi_{i+1}\pi_{i+2}$ forms a consecutive $1324$ pattern in $\pi$. This proves that $\st_1(\pi)=\entleaf(B)$. The fact that $\st_2(\pi)=\suleaf(B)$ follows by similar discussions. As $\psi$ exchanges the pair $(\entleaf,\suleaf)$ on $\B_n$, $\Upsilon$ exchanges the pair $(\st_1,\st_2)$ on 
$\S_n(312)$. This completes the proof of the theorem. 
\end{proof}

 \section*{Acknowledgement} 
 This work was supported by the National Science Foundation of China (grants  12271301 \& 12322115) and the Fundamental Research Funds for the Central Universities.


\begin{thebibliography}{99}




\bibitem{CS74} L. Carlitz and R. Scoville, Generalized Eulerian numbers: combinatorial applications, J. Reine Angew. Math., {\bf265} (1974), 110--137.

\bibitem{Chen1990} W.Y.C. Chen, A general bijective algorithm for trees, Proc. Natl. Acad. Sci. USA, {\bf87} (1990), 9635--9639.

\bibitem{Chen1993} W.Y.C. Chen, Context-free grammars, differential operators and formal power series, Theoret. Comput. Sci., {\bf117} (1993), 113--129.

\bibitem{Chen1} W.Y.C. Chen, E. Deutsch and S. Elizalde, Old and young leaves on plane trees, European J. Combin., {\bf 27} (2006), 414--427.

\bibitem{Chen2017} W.Y.C. Chen and A.M. Fu, Context-free grammars for permutations and increasing trees, Adv. in Appl. Math., {\bf39} (2017), 58--82.

\bibitem{CFu} X. Chen and S. Fu,  Two bijections on weakly increasing trees, Discrete Math., {\bf345} (2022), Article 112760.

\bibitem{Chen2} Z. Chen and H. Pan, Identities involving weighted Catalan, Schr\"oder and Motzkin paths, Adv. in Appl. Math., {\bf 86} (2017), 81--98.


\bibitem{DHS} D. Deligeorgaki, B. Han and L. Solus, Colored multiset Eulerian polynomials, \href{https://arxiv.org/abs/2407.12076}{arXiv:2407.12076}.

\bibitem{Deut} E. Deutsch, A bijection on ordered trees and its consequences, J. Combin. Theory Ser. A, {\bf90} (2000), 210--215.

\bibitem{Donaghey1} R. Donaghey, Restricted plane tree representations of four Motzkin-Catalan equations, J. Combin. Theory Ser. B, {\bf 22} (1977), 114--121.

\bibitem{Dong1} J.J.W. Dong, L.R. Du, K.Q. Ji and D.T.X. Zhang, New refinements of Narayana polynomials and Motzkin polynomials, Adv. in Appl. Math., {\bf166} (2025), Article 102855.

\bibitem{EN} S. Elizalde and M. Noy, Consecutive patterns in permutations, Adv. in Appl. Math., {\bf30} (2003), 110--125.

\bibitem{Fu} A.M. Fu, A context-free grammar for peaks and double descents of permutations, Adv. in Appl. Math., {\bf100} (2018), 179--196.

\bibitem{PPW} S. Kitaev, {\it Patterns in Permutations and Words}, Springer Science \& Business Media, 2011.

\bibitem{Kit} S. Kitaev, Partially ordered generalized patterns, Discrete Math., {\bf 298}(2005), 212--229.

\bibitem{KZ} S. Kitaev and P.B. Zhang, Non-overlapping descents and ascents in stack-sortable permutations, Discrete Appl. Math., {\bf344} (2024), 112--119.

\bibitem{KPP} A.G. Kuznetsov, I.M. Pak and  A.E. Postnikov, Increasing trees and alternating permutations, Russian Math. Surveys, {\bf49} (1994), 79--114.

\bibitem{LI2024} Y. Li, Z. Lin and T. Zhao, Two involutions on binary trees and generalizations, Adv. in Appl. Math., {\bf156} (2024), Article 102677.

\bibitem{Lin20} Z. Lin, J. Ma, S.-M. Ma and Y. Zhou,  Weakly increasing trees on a multiset, Adv. in Appl. Math.,  {\bf 129} (2021), Article 102206, 29 pp. 

\bibitem{LLWZ} Z. Lin, J. Liu, S. Wang and W.J.T. Zang, More bijective combinatorics of weakly increasing trees, Adv. in Appl. Math., {\bf160} (2024), Article 102755.

\bibitem{LLY} Z. Lin, J. Liu and S.H.F. Yan,  Parity statistics on restricted permutations and the Catalan--Schett polynomials, \href{https://arxiv.org/abs/2409.01558}{arXiv:2409.01558}.

\bibitem{LM} Z. Lin and J. Ma, A symmetry on weakly increasing trees and multiset Schett polynomials,  J. Combin. Theory Ser. A, {\bf213} (2025), Article 106010.

\bibitem{Lin21}  Z. Lin, C. Xu and T. Zhao, On the $\gamma$-positivity of multiset Eulerian polynomials,  European J. Combin., {\bf102} (2022), Article 103491, 18 pp.

\bibitem{Sun1} Z.-W. Sun, Congruences involving generalized central trinomial coefficients, Sci. China Math., {\bf 57} (2014), 1375--1400.

\bibitem{St0} R.P. Stanley, {\it Enumerative Combinatorics}, vol. 1, Cambridge Stud. Adv. Math., vol. 49, Cambridge Univ. Press, Cambridge, 1997.

\bibitem{St1} R.P. Stanley, {\it Catalan numbers}, Cambridge University Press, New York, 2015.

\end{thebibliography}
\end{document}